\documentclass[12pt,reqno]{amsart}
\usepackage{amsmath,amsthm,amsfonts,amssymb,amscd,amstext}
\usepackage[utf8]{inputenc}
\usepackage[dvips]{graphicx}
\usepackage{euscript}
\usepackage{a4wide}
\usepackage{mathtools}


\numberwithin{equation}{section}

\usepackage[colorlinks=true,linkcolor=red,urlcolor=black]{hyperref}

\newcommand{\qand}{\quad\text{and}\quad}

\theoremstyle{plain}
\newtheorem{maintheorem}{Theorem}

\newtheorem{theorem}{Theorem}[section]

\newtheorem{corollary}[theorem]{Corollary}
\newtheorem{lemma}[theorem]{Lemma}

\theoremstyle{definition}
\newtheorem{remark}[theorem]{Remark}
\newtheorem{definition}{Definition}
\newtheorem{example}{Example}
\newtheorem{conjecture}{Conjecture}

\renewcommand{\angle}{\sphericalangle}

\newcommand{\RR}{{\mathbb R}}

\newcommand{\ZZ}{{\mathbb Z}}

\newcommand{\sS}{{\mathbb S}}

\newcommand{\la}{\lambda}

\renewcommand{\epsilon}{\varepsilon}

\newcommand{\sing}{\mathrm{Sing}}
\newcommand{\vol}{\operatorname{vol}}
\newcommand{\mm}{\operatorname{m}}

\newcommand{\V}{\EuScript{V}}
\newcommand{\U}{\EuScript{U}}

\newcommand{\J}{\EuScript{J}}

\newcommand{\wt}{\widetilde}
\newcommand{\wh}{\widehat}



\begin{document}

\title
{Adapted metrics for singular hyperbolic flows}



\subjclass{Primary: 37D30; Secondary: 37D25;58B20.}
\renewcommand{\subjclassname}{\textup{2000} Mathematics5
  Subject Classification}
\keywords{adapted metrics,
  singular hyperbolicity.}


\author{Vitor Araujo}

\thanks{V.A. is partially supported by CNPq-Brazil (grant
  301392/2015-3) and FAPESB-Brazil (grand PIE0034/2016).
  L.S. is partially supported Fapesb-JCB0053/2013, PRODOC-UFBA/2014 and CNPq. V.C. is supported by CAPES}

\author{Vinicius Coelho}

\author{Luciana Salgado}

\address[V.A.]{Universidade Federal da Bahia,
Instituto de Matem\'atica e Estat\'\i stica\\
Av. Adhemar de Barros, S/N , Ondina,
40170-110 - Salvador-BA-Brazil}
\email{vitor.d.araujo@ufba.br, vitor.araujo.im.ufba@gmail.com}

\address[V.C.]{Universidade Federal do Oeste da Bahia, Centro Multidisciplinar de Bom Jesus da Lapa\\
Av. Manoel Novais, 1064, Centro, 47600-000 - Bom Jesus da Lapa-BA-Brazil}
\email{viniciuscs@ufob.edu.br}

\address[L.S.]{Universidade Federal do Rio de Janeiro, Instituto de
   Matem\'atica\\
   Avenida Athos da Silveira Ramos 149 Cidade Universit\'aria, P.O. Box 68530, 
   21941-909 Rio de Janeiro-RJ-Brazil }
 \email{lsalgado@im.ufjr.br, lucianasalgado@ufrj.br}

\begin{abstract}
  Singular and sectional-hyperbolic sets are the objects of the
  extension of the classical Smale Hyperbolic Theory to flows having
  invariant sets with singularities accumulated by regular orbits
  within the set. It is by now well-known that (partially) hyperbolic
  sets admit adapted metrics.

  We show the existence of singular-adapted metrics for any
  singular-hyperbolic set with respect to a $C^{1}$ vector field on
  finite dimensional compact manifolds. Moreover, we obtain
  sectional-adapted metrics for certain open classes of
  sectional-hyperbolic sets and also for any hyperbolic set.
\end{abstract}


\date{\today}

\maketitle
\tableofcontents

\section{Introduction}

Let $M$ be a connected compact finite $m$-dimensional
manifold, $m \geq 3$, with or without boundary. We consider
a vector field $X$, such that $X$ is inwardly transverse to
the boundary $\partial M$, if $\partial M\neq\emptyset$. The
flow generated by $X$ is denoted by $X_t$.

A hyperbolic set for a flow $X_t$ on a finite dimensional Riemannian
manifold $M$ is a compact invariant set $\Gamma$ with a continuous
splitting of the tangent bundle,
$T_\Gamma M= E^s\oplus E^X \oplus E^u$, where $E^X$ is the direction
of the vector field, for which the subbundles are invariant under the
derivative $DX_t$ of the flow $X_t$
\begin{align}\label{eq:hyp-splitting}
  DX_t\cdot E^*_x=E^*_{X_t(x)},\quad  x\in \Gamma, \quad t\in\RR,\quad *=s,X,u;
\end{align}
and $E^s$ is uniformly $(K,\lambda)$-contracted by $DX_t$ and $E^u$ is
likewise $(K,\lambda)$-expanded: there are $K,\lambda>0$ so that
\begin{align}\label{eq:Klambda-hyp}
  \|DX_t\mid_{E^s_x}\|\le K e^{-\lambda t},
  \quad
  \|(DX_t \mid_{E^u_x})^{-1}\|\le K e^{-\lambda t},
  \quad x\in \Gamma, \quad t>0.
\end{align}
Very strong properties can be deduced from the existence of
such hyperbolic structure; see for
instance~\cite{Bo75,BR75,Sh87,KH95,robinson2004}.

An important feature of a hyperbolic structure is that it
does not depend on the metric on the ambient manifold; see
\cite{HPS77}. We recall that a metric is said to be
\emph{adapted} to the hyperbolic structure if we can take
$K = 1$ in equation \eqref{eq:Klambda-hyp}.

Weaker notions of hyperbolicity (e.g. dominated splitting, partial
hyperbolicity, volume hyperbolicity, sectional-hyperbolicity,
singular-hyperbolicity) have been developed to encompass larger
classes of systems beyond the uniformly hyperbolic ones;
see~\cite{BDV2004} and
specifically~\cite{viana2000i,AraPac2010,ArbSal} for singular
hyperbolicity and Lorenz-like attractors.

In the same work \cite{HPS77}, Hirsch, Pugh and Shub asked about
adapted metrics for dominated splittings. This was given a positive
answer by Gourmelon \cite{Goum07} in 2007, where adapted metrics are
obtained for dominated splittings of both diffeomorphisms and flows,
and also for partially hyperbolic splittings.


Proving the existence of some hyperbolic structure is, in
general, a non-trivial matter, even in its weaker forms.  In
\cite{lewow80}, Lewowicz proved that a diffeomorphism on a
compact riemannian manifold is Anosov if and only if its
derivative admits a nondegenerate Lyapunov quadratic
function.  An example of application of the adapted metric
from \cite{Goum07} is contained in \cite{arsal2012a} where,
following the spirit of Lewowicz's result, the authors
construct quadratic forms which characterize partially
hyperbolic and singular-hyperbolic structures on a trapping
region for flows.

In \cite{arsal2015a}, the existence of an adapted metric for
singular-hyperbolic splittings was proved for
three-dimensional vector fields through the use of quadratic
forms.  In \cite[Theorem B]{salvinc2017}, the existence of
adapted metrics for any singular hyperbolic splitting having
a one-dimensional uniformly contracting bundle was obtained,
extending the result from \cite{arsal2015a} for any
codimension-one singular-hyperbolic set. This was done under
the point of view of $\J$-algebras of Potapov (see e.g.
\cite{Wojtk01}) and through the use of quadratic forms.

Here, we extend this result for any singular-hyperbolic set and also
for open classes of sectional-hyperbolic sets, using only multilinear
algebra and the dynamics of the tangent map to the flow.

\section{Preliminary definitions and results} \label{sec:stat}

We now present preliminary definitions and results. Recall
that a \emph{trapping region} $U$ for a flow $X_t$ is an
open subset of the manifold $M$ which satisfies: $X_t(U)$ is
contained in $U$ for all $t>0$, and there exists $T>0$ such
that $\overline{X_t(U)} $ is contained in the interior of
$U$ for all $t>T$. We define
$\Gamma(U)=\Gamma_X(U):= \cap_{t>0}\overline {X_t(U)}$ to be
the \emph{maximal positive invariant subset in the trapping
  region $U$}.

A \emph{singularity} for the vector field $X$ is a point
$\sigma\in M$ such that $X(\sigma)=\vec0$ or, equivalently,
$X_t(\sigma)=\sigma$ for all $t \in \RR$. The set formed by
singularities is the \emph{singular set of $X$} denoted
$\sing(X)$.  We say that a singularity is hyperbolic if the
eigenvalues of the derivative $DX(\sigma)$ of the vector
field at the singularity $\sigma$ have nonzero real part.

\begin{definition}\label{def:domination}
  A \emph{dominated splitting} over a compact invariant set $\Lambda$ of $X$
  is a continuous $DX_t$-invariant splitting $T_{\Lambda}M =
  E \oplus F$ with $E_x \neq \{0\}$, $F_x \neq \{0\}$ for
  every $x \in \Lambda$ and such that there are positive
  constants $K, \lambda$ satisfying
  \begin{align}\label{eq:def-dom-split}
    \|DX_t|_{E_x}\|\cdot\|DX_{-t}|_{F_{X_t(x)}}\|<Ke^{-\la
      t}, \ \textrm{for all} \ x \in \Lambda, \ \textrm{and
      all} \,\,t> 0.
  \end{align}
\end{definition}

A compact invariant set $\Lambda$ is said to be
\emph{partially hyperbolic} if it exhibits a dominated
splitting $T_{\Lambda}M = E \oplus F$ such that subbundle
$E$ is \emph{uniformly contracted}, i.e., there exists $C>0$
and $\lambda>0$ such that
$\|DX_t|_{E_x}\|\leq Ce^{-\lambda t}$ for $t\geq 0$.  In
this case $F$ is the \emph{central subbundle} of $\Lambda$.
Or else, we may replace uniform contraction along $E$ by
\emph{uniform expansion} along $F$, i.e., the right hand
side condition in \eqref{eq:Klambda-hyp}.

The following is an extension of the notion of $2$-sectional
expansion proposed in \cite[Definition 2.7]{MeMor08}.
We say that a $DX_t$-invariant subbundle
$F \subset T_{\Lambda}M$ is $p$\emph{-sectionally expanding}
if $\dim F_x \geq p$ is constant for $x\in\Lambda$
and there are positive constants $C , \lambda$ such that for
every $x \in \Lambda$ and every $p$-dimensional linear
subspace $L_x \subset F_x$ one has
    \begin{align}\label{eq:def-sec-exp}
      \vert \det (DX_t \vert_{L_x})\vert > C e^{\la t},
      \textrm{ for all } t>0.
    \end{align}

    \begin{definition}\label{def:sechypset}
      \cite[Definition 3]{Salgado19} A $p$-\emph{sectional-hyperbolic
        set} is a partially hyperbolic set whose central subbundle is
      $p$-sectionally expanding.
\end{definition}

This is a particular case of the so called
\emph{singular-hyperbolicity} whose definition we recall now.  A
$DX_t$-invariant subbundle $F \subset T_{\Lambda}M$ is said to be
\emph{volume expanding} if we replace \eqref{eq:def-sec-exp} by
 \begin{align}\label{eq:def-vol-exp}
      \vert \det (DX_t \vert_{F_x})\vert > C e^{\la t},
      \textrm{ for all } t>0.
    \end{align}

\begin{definition}\label{def:singhypset} \cite[Definition
  1]{MPP99} A \emph{singular-hyperbolic set} is a partially hyperbolic
  set whose central subbundle is volume expanding.
\end{definition}

Clearly, in the three-dimensional case, these notions are equivalent.
This is a feature of the Lorenz attractor as proved in \cite{Tu99} and
this notion extends the uniform hyperbolicity for singular flows,
because sectional-hyperbolic sets without singularities are
hyperbolic; see \cite{MPP04, AraPac2010}.

\subsection{Statements of main
  results} \label{sec:statement-result}

We recall the definition of adapted metrics in the
singular-hyperbolic setting, which extends the notion of
adapted metric for dominated and partially hyperbolic
splittings; see e.g.~\cite{Goum07}.

\begin{definition} \label{def:adapsingmetric} We say a
  Riemannian metric $\langle\cdot,\cdot\rangle$ is
  \emph{adapted to a partially hyperbolic splitting} if it
  induces a norm $|\cdot|$ admitting a constant $\lambda>0$
  simultaneously satisfying
    \begin{align*}
      |DX_t\mid_{E_x}|\cdot\big|(DX_t\mid_{F_x})^{-1}|
      \le e^{-\lambda t};
      \quad\text{and}\quad
      |DX_t\mid_{E_x}|\le e^{-\lambda t}
    \end{align*}
    for all $x\in \Gamma$ and $t>0$.
    Additionally, we say that $\langle\cdot,\cdot\rangle$ is
  \begin{enumerate}
  \item \emph{adapted to a singular-hyperbolic splitting} if
   $|\cdot|$ also satisfies
    \begin{align*}
      | \det (DX_t \mid_{F_x})| \ge
      e^{\lambda t} \end{align*}
    for all
    $x\in \Gamma$ and $t>0$. For simplicity we say that
    $\langle\cdot,\cdot\rangle$ is a \emph{singular-adapted
      metric}.
  \item \emph{adapted to a sectional-hyperbolic splitting}
    if the induced norm also satisfies
        \begin{align*}
          | \det (DX_t \mid_{L_x})| \ge
          e^{\lambda t} \end{align*}
        for all
        $x\in \Gamma$, $t>0$ and $2$-subspace $L_x\subset
        F_x$.
        For simplicity we say that
        $\langle\cdot,\cdot\rangle$ is a \emph{sectional-adapted
          metric}.
  \end{enumerate}
\end{definition}

Consider a partially hyperbolic splitting
$T_\Gamma M=E\oplus F$ where $E$ is uniformly contracted and
$F$ is volume expanding. We show that for $C^1$ flows having
a singular-hyperbolic set $\Gamma$ there exists a metric
adapted to the partial hyperbolicity and the area expansion,
as follows.

\begin{maintheorem} \label{mthm:singadaptmetric} Let
  $\Gamma$ be a singular-hyperbolic set for a $C^1$ vector
  field.  Then $\Gamma$ admits a singular adapted metric.
\end{maintheorem}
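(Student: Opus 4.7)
The plan is to combine Gourmelon's adapted metric for the partially hyperbolic splitting $T_\Gamma M = E \oplus F$ with a conformal rescaling of the metric on $F$ that adapts the volume expansion; the rescaling is driven by an integral averaging of the one-dimensional determinant cocycle on the line bundle $\Lambda^k F$ with $k = \dim F$. First, applying Gourmelon's theorem to the partially hyperbolic splitting one obtains a continuous Riemannian metric $g_0$ on $T_\Gamma M$ (with $E\perp F$) and a rate $\mu>0$, which may be taken arbitrarily close to the original partial hyperbolicity rate, satisfying $|DX_t\mid_{E_x}|_{g_0} \le e^{-\mu t}$ and $|DX_t\mid_{E_x}|_{g_0} \cdot |(DX_t\mid_{F_x})^{-1}|_{g_0} \le e^{-\mu t}$ for every $x\in\Gamma$ and $t>0$.

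Next, volume expansion yields constants $C,\nu>0$ such that the scalar cocycle $J_t(x) := |\det(DX_t\mid_{F_x})|_{g_0}$ satisfies $J_t(x)\ge Ce^{\nu t}$ for $t>0$. Choose $\nu'\in(0,\nu)$ and define
\[
\phi(x) := \int_0^{\infty} J_{-s}(x)\, e^{\nu' s}\, ds,
\]
which converges uniformly (the integrand is dominated by $C^{-1}e^{(\nu'-\nu)s}$), is continuous on $\Gamma$, and is bounded above and below by positive constants. Using the cocycle identity $J_{-s}(X_t x) = J_{t-s}(x)/J_t(x)$ and a change of variables inside the integral, one obtains
\[
\phi(X_t x)\, J_t(x) \ge e^{\nu' t}\, \phi(x), \qquad t\ge 0,
\]
the adapted expansion inequality on $\Lambda^k F$. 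Now define a new Riemannian metric by leaving $g_0$ unchanged on $E$, keeping $E\perp F$, and rescaling conformally on $F$ via $\langle u,v\rangle^{\text{new}}_x := \phi(x)^{2/k}\, \langle u,v\rangle_{g_0,x}$ for $u,v\in F_x$. The induced volume form on $F_x$ is multiplied by $\phi(x)$, hence $|\det(DX_t\mid_{F_x})|_{\text{new}} = \phi(X_t x) J_t(x)/\phi(x) \ge e^{\nu' t}$; the volume expansion is adapted, and the contraction on $E$ is unaffected.

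The main obstacle is the domination inequality in the new metric, which reads
\[
|DX_t\mid_{E_x}|_{\text{new}} \cdot |(DX_t\mid_{F_x})^{-1}|_{\text{new}} = \left(\frac{\phi(x)}{\phi(X_t x)}\right)^{\!1/k} |DX_t\mid_{E_x}|_{g_0} \cdot |(DX_t\mid_{F_x})^{-1}|_{g_0} \le \left(\frac{\phi(x)}{\phi(X_t x)}\right)^{\!1/k} e^{-\mu t}.
\]
The ratio $\phi(x)/\phi(X_t x)$ is uniformly bounded on compact $\Gamma$ but a constant bound is insufficient for small $t>0$. Differentiating the defining integral along the flow gives
\[
\frac{d}{dt}\log\phi(X_t x)\Big|_{t=0} = \nu' - \operatorname{div}_F X(x) + \frac{1}{\phi(x)},
\]
which is bounded by a constant $L>0$ controlled by $\nu'$, by the continuous function $\operatorname{div}_F X$, and by the uniform lower bound of $\phi$, all available on the compact set $\Gamma$. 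Hence $(\phi(x)/\phi(X_t x))^{1/k} \le e^{Lt/k}$ for $t\ge 0$. Choosing $\nu'$ small enough to keep $L$ under control and then invoking Gourmelon's theorem with rate $\mu>L/k+\lambda$ for a target $\lambda>0$ (feasible provided the original domination rate exceeds $L/k$, which can be arranged by the choices of intermediate rates) delivers the required singular adapted metric at rate $\lambda$. Carrying out this balancing of constants---so that the orbit-Lipschitz behaviour of $\log\phi$ is absorbed by the available adaptation rate---is the technical heart of the argument.
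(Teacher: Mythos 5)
Your plan is quite different from the paper's: you apply Gourmelon's theorem directly to $E\oplus F$ and then try to conformally rescale the metric on $F$ by a function $\phi(x)=\int_0^{\infty}J_{-s}(x)e^{\nu's}\,ds$ to adapt the determinant cocycle, absorbing the damage to the domination inequality via a bound on the orbit-derivative of $\log\phi$. The cocycle algebra leading to $\phi(X_tx)J_t(x)\ge e^{\nu' t}\phi(x)$, the $\phi^{2/k}$-rescaling making $|\det(DX_t\mid_{F_x})|_{\mathrm{new}}\ge e^{\nu' t}$, and the formula $\frac{d}{dt}\log\phi(X_tx)\big|_{t=0}=\nu'-\operatorname{div}_F X(x)+1/\phi(x)$ are all correct. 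You also correctly identify the domination inequality as the obstacle.

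However, the closing claim---that one can ``keep $L$ under control'' and then invoke Gourmelon at a rate $\mu>L/k+\lambda$, ``feasible provided the original domination rate exceeds $L/k$, which can be arranged by the choices of intermediate rates''---is not justified, and this is a genuine gap rather than a technical point to be smoothed over. The constant $L=\sup_{\Gamma}\bigl(\operatorname{div}_F X-\nu'-1/\phi\bigr)$ is governed by the pointwise size of $\operatorname{div}_F X$ on $\Gamma$, a quantity fixed by the vector field, while the rate $\mu$ attainable via Gourmelon is bounded above by the asymptotic spectral gap of the domination, another quantity fixed by the system. These two invariants are unrelated: nothing forces $k\mu>L$, and for singular hyperbolic sets (e.g.\ near a Lorenz-like singularity, where $\operatorname{div}_F X$ can be of the order of the sum of the expanding eigenvalue and the weak eigenvalue) the pointwise variation of $\operatorname{div}_F X$ along orbits can exceed $k$ times any achievable domination rate. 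Shrinking $\nu'$ shifts $L$ by at most a bounded amount and does not close the gap; choosing the initial Gourmelon metric differently changes $J_t$, hence $\phi$, but does not give you control over $\operatorname{div}_F X$. In short, a non-constant conformal factor on $F$ necessarily perturbs the \emph{infinitesimal} domination rate by $\tfrac{1}{k}\partial_t\log\phi$, and there is no mechanism in your argument to keep this perturbation small.

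The paper's route sidesteps this entirely: it introduces a skew-product over $\mathbb{S}^1$ to add a one-dimensional auxiliary contracting bundle $F$, passes to the $d_c$-th exterior power where volume expansion becomes \emph{uniform} expansion of a line bundle, applies Gourmelon there (so the adapted structure for the determinant comes for free), and then pulls the metric back to $F\oplus E^c$ via Lemma~\ref{lemma189}. That transfer, together with Lemma~\ref{le:final}, only involves a \emph{constant} rescaling $\gamma$ of the metric, which has no effect on operator norms of $DX_t$ and hence cannot damage the domination. Replacing your orbit-dependent conformal factor by a structural exterior-power argument is exactly what makes the paper's proof go through, and is the idea your proposal is missing.
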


In the particular case of a uniformly hyperbolic set
$\Gamma$ for a flow, with splitting
$T_\Gamma M=E^s\oplus E^X\oplus E^u = E\oplus F$ with
$E=E^s$ and $F=E^X\oplus E^u$ is sectional-hyperbolic and
admits an adapted metric.

\begin{maintheorem} \label{mthm:sectadaptmetricunif} Let
  $\Gamma$ be a uniformly hyperbolic set for a $C^1$ vector
  field.  Then $\Gamma$ admits a sectional-adapted
  metric.
\end{maintheorem}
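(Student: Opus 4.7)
The plan is to start from a classical Mather-type adapted metric for the hyperbolic splitting $T_\Gamma M=E^s\oplus E^X\oplus E^u$ and then rescale so that the flow direction becomes isometric and orthogonal to $E^s$ and $E^u$. By the classical adapted-metric theorem for uniformly hyperbolic sets (see e.g.~\cite{Goum07}), there is a Riemannian metric $\langle\cdot,\cdot\rangle_1$ on $M$ with $|DX_t v|_1\le e^{-\la t}|v|_1$ for $v\in E^s$ and $|DX_t v|_1\ge e^{\la t}|v|_1$ for $v\in E^u$, for every $t\ge 0$. Uniform hyperbolicity forces $\sing(X)\cap\Gamma=\emp$, so $|X|_1$ is bounded below on $\Gamma$. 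I would define a new inner product on $T_\Gamma M$ as the orthogonal direct sum of $\langle\cdot,\cdot\rangle_1|_{E^s}$, the inner product on $E^X$ prescribed by $|X(x)|:=1$, and $\langle\cdot,\cdot\rangle_1|_{E^u}$, and extend continuously to a Riemannian metric on $M$.

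For this metric, the partial-hyperbolic conditions of Definition~\ref{def:adapsingmetric} come essentially for free. The contraction on $E=E^s$ is preserved, and for any $v=cX(x)+u\in F=E^X\oplus E^u$ with $u\in E^u$, orthogonality of $E^X$ and $E^u$ at both $x$ and $X_t(x)$ together with $DX_t X(x)=X(X_t(x))$ (of unit norm) give
\[
  |DX_t v|^2=c^2+|DX_t u|^2\ge c^2+|u|^2=|v|^2,
\]
so that $|(DX_t|_F)^{-1}|\le 1$ and hence $|DX_t|_E|\cdot|(DX_t|_F)^{-1}|\le e^{-\la t}$.

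The core of the argument is the $2$-sectional expansion on $F$. Given a $2$-plane $L\subset F$ with orthonormal basis $\{v_1,v_2\}$, write $v_i=a_iX(x)+u_i$ with $u_i\in E^u$; orthonormality forces $a_1^2+a_2^2\le 1$, $|u_i|^2=1-a_i^2$ and $\langle u_1,u_2\rangle=-a_1a_2$. A direct Gram-determinant calculation yields
\[
  |DX_t v_1\wedge DX_t v_2|^2=|DX_t u_1\wedge DX_t u_2|^2+|a_1DX_t u_2-a_2DX_t u_1|^2.
\]
The key input is that, in the Mather metric, every singular value of $DX_t|_{E^u}$ is at least $e^{\la t}$, so the $2$-Jacobian of $DX_t$ on every $2$-plane of $E^u$ is at least $e^{2\la t}$. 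This makes the first summand $\ge e^{4\la t}(1-a_1^2-a_2^2)$, while the second equals $|DX_t(a_1u_2-a_2u_1)|^2\ge e^{2\la t}(a_1^2+a_2^2)$. Adding the two and using $a_1^2+a_2^2\le 1$ then gives $|DX_t v_1\wedge DX_t v_2|^2\ge e^{2\la t}$, hence $|\det(DX_t|_L)|\ge e^{\la t}$.

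I expect the main obstacle to be precisely this last estimate: promoting vectorwise expansion on $E^u$ to $2$-dimensional expansion on every $2$-plane of the strictly larger bundle $F=E^X\oplus E^u$, which mixes the isometric flow direction with the unstable one. The Gram-determinant identity above reduces the problem to a $2$-Jacobian bound on $E^u$, itself a consequence of the elementary fact that a uniform lower bound on the smallest singular value of a linear map forces the $p$-Jacobian on every $p$-subspace to be at least the $p$-th power of that bound.
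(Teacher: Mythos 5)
Your proposal is correct, and it follows the same overall strategy as the paper: start from an adapted (Mather/Gourmelon) metric, rescale the flow direction so that $|X|\equiv 1$ and $E^X\perp E^s\oplus E^u$, and then reduce the $2$-sectional expansion on $F=E^X\oplus E^u$ to the vectorwise expansion on $E^u$. The interesting difference is in the concluding step. The paper works with $DX_{-t}$ and only estimates elementary bivectors of two shapes, $X(x)\wedge v$ and $u\wedge v$ with $u,v\in E^u$, and then invokes Lemma~\ref{le:aux1} together with Remark~\ref{rmk:singularbasisplit} (choosing $u,v$ from a singular basis of $DX_{-t}\mid_{E^u}$) to promote those bounds to $\|\wedge^2 DX_{-t}\mid_F\|\le e^{-\lambda t}$. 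You instead take an arbitrary orthonormal pair $v_1,v_2$ spanning a $2$-plane $L\subset F$, split each as $a_iX+u_i$, and use the Gram-determinant identity
\begin{align*}
  |DX_t v_1\wedge DX_t v_2|^2
  =
  |DX_t u_1\wedge DX_t u_2|^2 + |a_1 DX_t u_2 - a_2 DX_t u_1|^2
\end{align*}
to conclude directly. This replaces the singular-basis machinery of Lemma~\ref{le:aux1} with a single elementary computation, and it handles every $2$-plane of $F$ at once rather than just a basis of $\wedge^2 F$; in that sense it is more self-contained. The identity and all the intermediate quantities you use ($a_1^2+a_2^2\le 1$, $|u_1\wedge u_2|^2=1-a_1^2-a_2^2$, $|a_1u_2-a_2u_1|^2=a_1^2+a_2^2$) check out, and the reduction of the first summand to a $2$-Jacobian bound on $E^u$ is indeed immediate once every singular value of $DX_t\mid_{E^u}$ is $\ge e^{\lambda t}$. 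The paper's method has the advantage of having already set up Lemma~\ref{le:aux1} because it is reused in the proofs of Theorems~\ref{mthm:singadaptmetric} and~\ref{mthm:sectadaptmetricsing}; in isolation, your computation is arguably the cleaner route for this particular theorem.
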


Refining the arguments we obtain the following open class of
sectional-hyperbolic attracting sets admitting
sectional-adapted metrics. We say that a vector subbundle
with an invariant splitting $E^c\oplus E^u$ over a compact
invariant subset $\Gamma$ of a $C^1$ vector field is
\emph{strongly partially hyperbolic} if there are
$K>0, \mu>\lambda>0$ so that, for each $t>0$ and $x\in\Gamma$
\begin{align*}
 \|DX_t\mid E^c_x\|\le Ke^{\lambda t}\qand \|DX_{-t}\mid
  E^u_{X_tx}\|<Ke^{-\mu t}.
\end{align*}
Note that this is clearly stronger than the domination condition.

\begin{maintheorem} \label{mthm:sectadaptmetricsing} Let
  $\Gamma$ be a partially hyperbolic set for a $C^1$ vector
  field whose splitting $T_\Gamma M=E^{s}\oplus E^c\oplus E^u$
  is dominated with subbundles of constant dimension and
  such that $E^{s}$ is uniformly contracted; $E^c$ is
  $2$-dimensional and area expanding; $E^u$ is uniformly
  expanded; and $E^c\oplus E^u$ is strongly partially
  hyperbolic.  Then $\Gamma$ admits a sectional-adapted
  metric.
\end{maintheorem}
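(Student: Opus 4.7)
The strategy is to combine Gourmelon's adapted metric for the dominated/partially hyperbolic splitting with the construction of Theorem \ref{mthm:singadaptmetric} applied to the $2$-dimensional area-expanding bundle $E^c$, and then deduce the sectional adaptation for arbitrary $2$-planes $L\subset E^c\oplus E^u$ from an exterior-algebra computation in $\Lambda^2(E^c\oplus E^u)$.

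First, I would start from any Riemannian metric making $E^s,E^c,E^u$ mutually orthogonal (possible since the splitting is continuous) and apply Gourmelon's theorem to the dominated splitting $E^s\oplus E^c\oplus E^u$, keeping orthogonality. This yields positive constants $\lambda_s,\lambda_c,\mu_u$ with $\mu_u>\lambda_c$ (strong partial hyperbolicity) such that
\begin{align*}
|DX_t\mid_{E^s}|\le e^{-\lambda_s t},\qquad |DX_t\mid_{E^c}|\le e^{\lambda_c t},\qquad |(DX_t\mid_{E^u})^{-1}|\le e^{-\mu_u t},
\end{align*}
together with the adapted domination required by the partial-hyperbolicity portion of Definition \ref{def:adapsingmetric}. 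To promote the area expansion on $E^c$ to an adapted one, I would then rescale the inner product on $E^c$ conformally by a positive function $\phi:\Gamma\to\RR$ obtained as a Birkhoff-type average of the area cocycle, following the construction in the proof of Theorem \ref{mthm:singadaptmetric} now applied to the $2$-volume form of $E^c$. A suitable choice of averaging parameter yields $|\det(DX_t\mid_{E^c})|\ge e^{\alpha_c t}$ with $\alpha_c>0$; the positive gap $\mu_u-\lambda_c$ supplied by strong partial hyperbolicity absorbs the distortion of $|DX_t\mid_{E^c}|$ produced by the rescaling, so the adapted domination survives with slightly smaller but still positive constants, and in particular $\mu_u>\lambda_c$ is preserved.

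For the sectional condition, fix $x\in\Gamma$ and a $2$-plane $L\subset E^c_x\oplus E^u_x$, pick an orthonormal basis $v_i=a_i+b_i$ with $a_i\in E^c_x$, $b_i\in E^u_x$, and decompose
\begin{align*}
v_1\wedge v_2 = a_1\wedge a_2 \;+\; (a_1\wedge b_2-a_2\wedge b_1) \;+\; b_1\wedge b_2
\end{align*}
inside $\Lambda^2(E^c\oplus E^u)=\Lambda^2 E^c \oplus (E^c\wedge E^u) \oplus \Lambda^2 E^u$. The three summands are pairwise orthogonal (since $E^c\perp E^u$) and invariant under $\Lambda^2 DX_t$. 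Standard singular-value estimates give minimum expansion rate at least $\alpha_c$ on $\Lambda^2 E^c$; at least $\alpha_c-\lambda_c+\mu_u>0$ on $E^c\wedge E^u$, via the isometric identification $E^c\wedge E^u\cong E^c\otimes E^u$ under which $\Lambda^2 DX_t$ acts as $DX_t\mid_{E^c}\otimes DX_t\mid_{E^u}$, whose minimum singular value is the product of the minimum singular values of the factors, namely $\tfrac{|\det(DX_t\mid_{E^c})|}{|DX_t\mid_{E^c}|}\cdot\tfrac{1}{|(DX_t\mid_{E^u})^{-1}|}\ge e^{(\alpha_c-\lambda_c)t}\,e^{\mu_u t}$ (using the two-dimensional identity for $E^c$); and at least $2\mu_u$ on $\Lambda^2 E^u$. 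Letting $\beta:=\min\{\alpha_c,\,\alpha_c-\lambda_c+\mu_u,\,2\mu_u\}>0$ and using orthogonality,
\begin{align*}
|\det(DX_t\mid_L)|=|DX_t v_1\wedge DX_t v_2|\ge e^{\beta t}|v_1\wedge v_2|=e^{\beta t},
\end{align*}
which is the required sectional adapted inequality.

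The main obstacle is the second step: producing the adapted area expansion on $E^c$ without destroying the strict inequality $\mu_u>\lambda_c$ that is crucial for the $E^c\wedge E^u$ estimate. This is exactly why the theorem assumes \emph{strong} (rather than ordinary) partial hyperbolicity of $E^c\oplus E^u$: the explicit gap $\mu-\lambda>0$ is what provides room to absorb the distortion introduced by the conformal rescaling on $E^c$. Once the metric is in hand, the sectional expansion is a direct consequence of the orthogonality of the three summands in $\Lambda^2(E^c\oplus E^u)$ and the three singular-value estimates above.
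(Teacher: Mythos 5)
Your final exterior-algebra step is correct and is in fact a cleaner version of what the paper does: the orthogonal invariant decomposition $\wedge^2(E^c\oplus E^u)=\wedge^2E^c\oplus(E^c\wedge E^u)\oplus\wedge^2E^u$, the identification $E^c\wedge E^u\cong E^c\otimes E^u$, and the singular-value bounds $\alpha_c$, $\alpha_c-\lambda_c+\mu_u$, $2\mu_u$ (each positive, using $\mu_u>\lambda_c$) do yield the required lower bound on $|\det(DX_t\mid_L)|$ once all three adapted estimates hold simultaneously in a single metric. The problem is that you have not actually produced such a metric; step 2 is where the proof breaks.

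Concretely, a conformal rescaling $|\cdot|_{E^c_x}\mapsto\phi(x)|\cdot|_{E^c_x}$ multiplies every quantity attached to $E^c$ — $|DX_t\mid_{E^c}|$, $\mm(DX_t\mid_{E^c})$, and hence also $|DX_t\mid_{E^s}|\cdot|(DX_t\mid_{E^c\oplus E^u})^{-1}|$, since $\mm(DX_t\mid_{E^c\oplus E^u})=\mm(DX_t\mid_{E^c})$ when $E^u$ dominates $E^c$ — by the bounded factor $\phi(X_tx)/\phi(x)$. This preserves all exponential \emph{rates} but generically ruins the constant $K=1$ obtained from Gourmelon. Your claim that the rate gap $\mu_u-\lambda_c$ ``absorbs the distortion'' conflates rates with constants: a gap between rates does not convert $Ce^{-\lambda t}\le e^{-\lambda' t}$ for $t$ near $0$ when $C>1$, which is precisely what needs re-adapting. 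Fixing this by further rescaling $E^s$ and $E^u$ reintroduces a coupled system of adapted-metric requirements, and nothing in your sketch guarantees it closes up. Note also that your reference to ``the construction in the proof of Theorem~\ref{mthm:singadaptmetric}'' is misleading: that proof is not a conformal-rescaling argument at all, but a skew-product construction on $M\times\sS^1$, and the present theorem's proof in the paper extends this to a \emph{double} skew product on $M\times\sS^1\times\sS^1$, introducing an auxiliary contracting factor $F_0$ to adapt the $E^s$-vs-$(E^c\oplus E^u)$ domination and the area expansion, and an auxiliary expanding factor $F$ with rate $\alpha_1\in(\lambda,\eta)$ placed strictly between the top rate of $E^c$ and the bottom rate of $E^u$ — which is exactly where the strong partial hyperbolicity hypothesis is used, not merely to guarantee the sign of $\alpha_c-\lambda_c+\mu_u$. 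Gourmelon's theorem is then applied to exterior-power cocycles of the lifted flow, and Lemmas~\ref{lemma189}, \ref{le:final}, \ref{le:subaddinduction} push the resulting adapted metric back down to $T_\Gamma M$. That machinery is what resolves the simultaneity problem you wave away.
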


\subsection{Applications and Conjectures}
\label{sec:statement-result-applic}

\begin{example}
  \label{ex:wild}
  In~\cite{ST98} Turaev and Shilnikov construct a \emph{wild
    attractor} $\Gamma$ (see Figure \ref{fig:wild}) of a vector field
  having a singular hyperbolic splitting $T_\Gamma M=E\oplus F$ where
  $\dim F=3$ and $\dim E=n-3$ for any $n\ge4$. Moreover, the bundle
  $F$ is not sectionally expanding; see \cite[p 296, formula
  (13)]{ST98}. These attractors are also robust: these properties
  persist for all $C^1$ small perturbations of the given vector field.

  Hence Theorem~\ref{mthm:singadaptmetric} ensures that the
  wild attractors constructed in~\cite{ST98} admit a
  singular-hyperbolic metric.
\begin{figure}[htpb!]
    \centering
    \includegraphics[width=3cm]{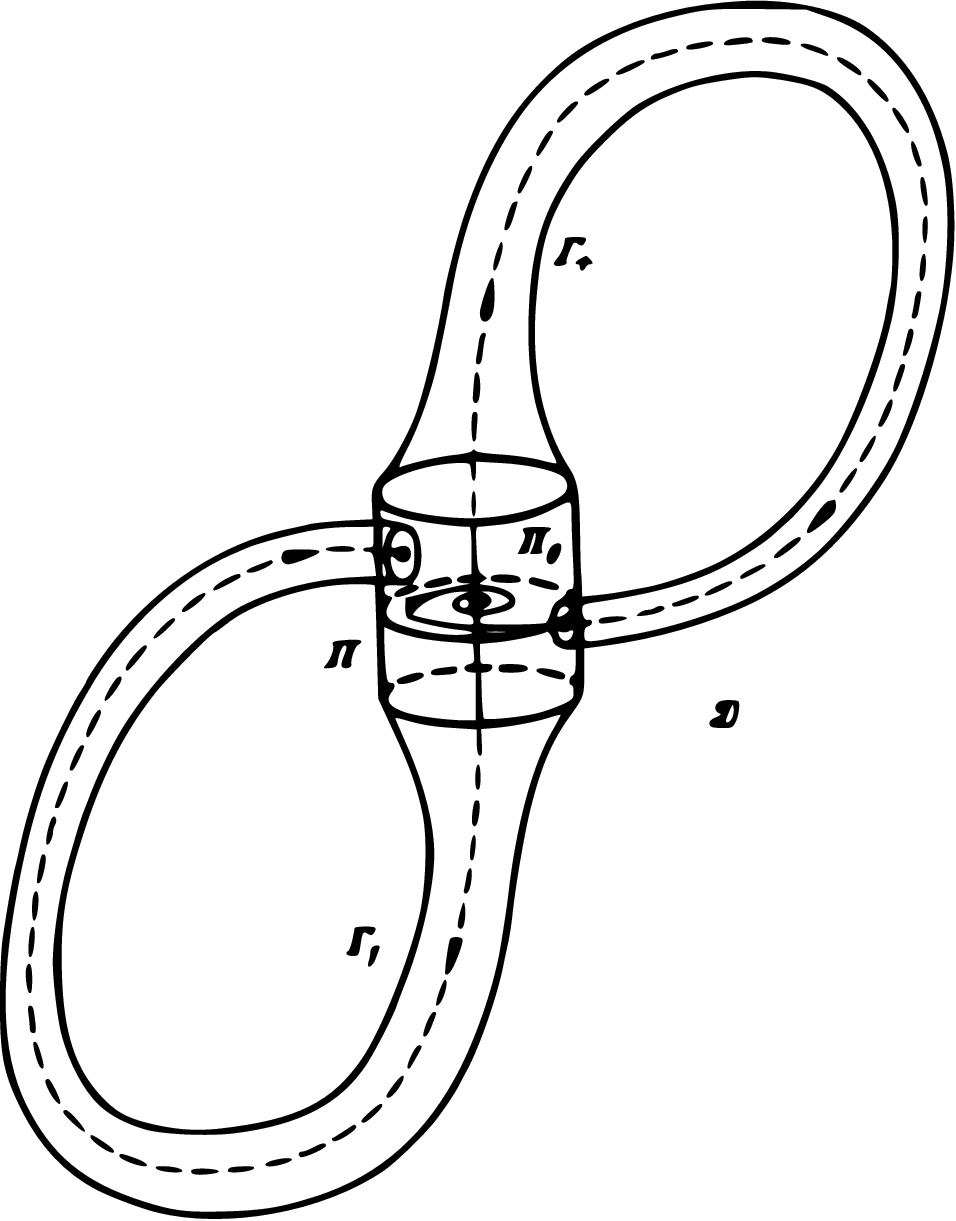}
    \caption{Example of Turaev-Shil'nikov's wild strange attractor.}
    \label{fig:wild}
  \end{figure}
\end{example}

For the next application, we recall that the Riemannian
manifold $M$ is naturally endowed with a volume form $m$
called Lebesgue measure.

\begin{corollary}
  \label{cor:conservative}
  Let $X$ be a $C^1$ volume preserving flow on $M$. Then
  every partially hyperbolic set is singular-hyperbolic and
  thus admits a singular-adapted metric.
\end{corollary}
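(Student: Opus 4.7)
The plan is to show that, under volume preservation, the uniform contraction on $E$ forces $F$ to be volume expanding, so that any partially hyperbolic set automatically satisfies Definition~\ref{def:singhypset}; Theorem~\ref{mthm:singadaptmetric} then gives the singular adapted metric.

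First I would record the consequence of volume preservation: for a $C^1$ flow $X_t$ preserving Lebesgue measure $m$, one has $|\det DX_t(x)|=1$ for every $x\in M$ and every $t\in\RR$. Next, fix a partially hyperbolic splitting $T_\Gamma M=E\oplus F$ with $E$ uniformly contracted. For each $x\in\Gamma$ choose orthonormal bases of $E_x$ and $F_x$; their union is a basis of $T_xM$ whose Gram determinant equals $\sin^2\theta(x)$, where $\theta(x)\in(0,\pi/2]$ denotes the minimal angle between $E_x$ and $F_x$. A standard change-of-basis computation (comparing the determinant of $DX_t$ in these bases with the determinants of its restrictions to $E_x$ and $F_x$) yields
\begin{align*}
|\det DX_t(x)|
= |\det(DX_t|_{E_x})|\cdot|\det(DX_t|_{F_x})|\cdot\frac{\sin\theta(X_t x)}{\sin\theta(x)}.
\end{align*}
Because the splitting $E\oplus F$ is continuous and $\Gamma$ is compact and invariant, there exists $c_0>0$ with $\sin\theta(y)\ge c_0$ for every $y\in\Gamma$; hence the angle quotient is uniformly bounded above and below by positive constants, independent of $x$ and $t$.

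Combining this with the volume preservation identity gives
\begin{align*}
|\det(DX_t|_{F_x})|\ge c_0\cdot|\det(DX_t|_{E_x})|^{-1}
\end{align*}
for all $x\in\Gamma$ and $t>0$. The uniform contraction $\|DX_t|_{E_x}\|\le Ke^{-\lambda t}$ gives $|\det(DX_t|_{E_x})|\le K^{\dim E}e^{-\lambda(\dim E) t}$, so
\begin{align*}
|\det(DX_t|_{F_x})|\ge c_0 K^{-\dim E}e^{\lambda(\dim E) t},
\end{align*}
which is precisely the volume expansion condition \eqref{eq:def-vol-exp} with constants $C=c_0 K^{-\dim E}$ and exponent $\lambda\cdot\dim E>0$. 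Therefore $\Gamma$ is a singular hyperbolic set in the sense of Definition~\ref{def:singhypset}, and Theorem~\ref{mthm:singadaptmetric} produces the desired singular adapted metric.

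The only non-routine ingredient is the angle bound $\sin\theta\ge c_0$, but this is immediate from continuity of the splitting on the compact set $\Gamma$ together with the fact that $E_x$ and $F_x$ are transverse by definition; no additional hypothesis on the flow is required. Everything else is a short multilinear algebra computation plus an appeal to the main theorem already proved.
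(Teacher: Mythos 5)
Your proof is correct and follows essentially the same route as the paper: use $|\det DX_t|\equiv1$, decompose the determinant along the splitting with a bounded angle factor (by continuity plus compactness of $\Gamma$), and deduce volume expansion of $F$ from uniform contraction of $E$, then invoke Theorem~\ref{mthm:singadaptmetric}. Two minor remarks: your determinant identity keeping the \emph{ratio} of angle factors at $x$ and $X_t x$ is actually more precise than the version written in the paper (which records only a single factor at $x$); and in dimension greater than two the Gram determinant of the concatenated bases is the product of $\sin^2$ of the \emph{principal angles}, not $\sin^2$ of the single minimal angle — but this does not affect the argument since that product is still a continuous, strictly positive function on the compact set $\Gamma$, which is all the estimate needs.
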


\begin{proof}
  It is enough to explain that partial hyperbolicity of a
  compact invariant subset $\Gamma$ for a
  conservative vector field $X$ implies
  singular-hyperbolicity, as follows.

  Let $T_\Gamma M=E^{s}\oplus F$ be the singular hyperbolic
  splitting over $\Gamma$. Domination and compactness
  imply that the minimum angle
  \begin{align*}
    \angle(E^{s},F)=\inf\{\arccos \langle u,v\rangle: u\in E_{x}^{s}, v\in
    F_x, \|u\|=\|v\|=1, x\in \Gamma\}
  \end{align*}
  between the bundles is away from zero; see \cite[Appendix
  B]{BDV2004}.  Recall that $E^{s}$ is uniformly contracted.
  Hence, $|\det DX_t|\equiv1$ since $X$ is conservative and
  for all $x\in\Gamma, t>0$
  \begin{align*}
    1&=|\det DX_t(x)|=|\det DX_t\mid E_{x}^{s}|\cdot|\det
    DX_t\mid F_x|\cdot\sin\angle(E_{x}^{s},F_x)
  \end{align*}
so that
  \begin{align*}
    |\det DX_t\mid F_x|
    =
    \big(|\det DX_t\mid E_{x}^{s}|
    \cdot
    \sin\angle(E_{x}^{s},F_x)\big)^{-1}
    \ge (Ke^{-\mu t}\sin\angle(E^{s},F))^{-1}
  \end{align*}
  is uniformly expanding.
\end{proof}

Our methods do not provide that the singular-adapted metric is still
preserved by the flow.

\begin{conjecture}
  \label{conj:conservative}
  Given a singular hyperbolic set for a conservative $C^1$ vector
  field, there exists a singular-adapted metric whose induced volume
  form is still preserved by the flow.
\end{conjecture}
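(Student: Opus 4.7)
The plan is to start from the singular adapted metric $g_1$ provided by Theorem~\ref{mthm:singadaptmetric} and conformally correct it so that its induced volume form coincides with the Lebesgue volume $m$. Writing $\vol_{g_1}=\rho\cdot m$ on $\Gamma$ for some positive continuous function $\rho$, the conservativity of $X$ implies that $\vol_{g_1}$ is flow-invariant precisely when $\rho$ is constant along orbits of $X$. Setting $g_2:=\rho^{-2/\dim M}\,g_1$ immediately produces a metric with $\vol_{g_2}=m$, hence trivially flow-invariant; the real question is whether $g_2$ retains the strict adaptedness inequalities of Definition~\ref{def:adapsingmetric}.

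A direct computation shows that a conformal rescaling $g_1\mapsto\phi^{2}g_1$ multiplies operator norms of $DX_t$ along any subbundle by $\phi(X_tx)/\phi(x)$, and the determinant on $F$ by $\bigl(\phi(X_tx)/\phi(x)\bigr)^{\dim F}$. With $\phi=\rho^{-1/\dim M}$, compactness of $\Gamma$ makes these factors uniformly bounded above and below, so $g_2$ is adapted only up to a multiplicative constant $K\ge1$, not strictly adapted with $K=1$. To close this gap I would combine the conformal correction with a cohomological one: seek $\phi=e^{u}\rho^{-1/\dim M}$ where $u\colon\Gamma\to\RR$ is a bounded continuous solution of the cohomological equation $X(u)=\log\rho-\bar c$ for an appropriate constant $\bar c$. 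Such a $u$ makes the multiplicative cocycle $\phi\circ X_t/\phi$ asymptotically trivial, so that the exponential inequalities given by Theorem~\ref{mthm:singadaptmetric} survive after an arbitrarily small reduction of $\lambda$.

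An alternative route is to re-run the averaging construction underlying Theorem~\ref{mthm:singadaptmetric}, but imposing the constraint $\det A(x)\equiv 1$ on the modifying positive-definite operator $A(x)$ from the outset. The proof of Corollary~\ref{cor:conservative} already exhibits a tight coupling between uniform contraction on $E^{s}$ and volume expansion on $F$, through the identity $|\det DX_t|\equiv1$ and the uniform lower bound on $\angle(E^{s},F)$; this coupling suggests that a single volume-normalised averaging can produce both the contraction estimate on $E^{s}$ and the volume-expansion estimate on $F$ simultaneously, bypassing the need for a post hoc conformal correction.

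The main obstacle in either route is the existence of a bounded continuous solution $u$ to the cohomological equation $X(u)=\log\rho-\bar c$ on $\Gamma$, particularly near the hyperbolic singularities accumulated by regular orbits of the singular hyperbolic set. Away from $\sing(X)$ a flow-box argument yields local primitives, but gluing them into a globally bounded continuous solution requires a quantitative cancellation of the oscillation of $\log\rho$ along long orbit segments that approach the singularities. I expect this is precisely where the hypothesis $|\det DX_t|\equiv1$ must be invoked in its full strength, together with the partial hyperbolicity of $\Gamma$ to transfer orbital estimates to transverse ones via the (strong) stable foliation; producing such a solution uniformly up to the singular set is the crux of the difficulty and, in my view, the reason the statement is formulated as a conjecture rather than a theorem.
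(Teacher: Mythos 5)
The statement you were asked to prove is presented in the paper as \emph{Conjecture}~\ref{conj:conservative}, immediately after the authors remark that ``our methods do not provide that the singular adapted metric is still preserved by the flow.'' There is no proof in the paper to compare against: the problem is open. You correctly diagnose this in your final paragraph, so the substance of your answer is really an exploration of strategies and obstacles rather than a proof, and that is the honest thing to offer here.

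That said, there is an internal tension in your cohomological route that is worth flagging. Once you set $g_2=\rho^{-2/\dim M}\,g_1$ you have $\vol_{g_2}=m$, which is already flow-invariant; so far so good. But if you then replace $\phi=\rho^{-1/\dim M}$ by $\phi=e^{u}\rho^{-1/\dim M}$, the induced volume becomes $\vol_{g_2}=\phi^{\dim M}\rho\,m=e^{u\dim M}\,m$, and this is flow-invariant if and only if $u$ is constant along orbits, i.e.\ $X(u)=0$. The cohomological equation you propose, $X(u)=\log\rho-\bar c$, is therefore incompatible with volume preservation unless $\log\rho\equiv\bar c$, which is precisely the degenerate situation where no correction is needed. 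In other words, within the class of conformal rescalings the two constraints (exact volume preservation and $K=1$ adaptedness) pull in opposite directions, and a purely conformal correction cannot satisfy both unless the original density $\rho$ is already an orbitwise constant. Any genuine attack must deform $g_1$ anisotropically --- differently on $E^{s}$ and $F$ --- while keeping the total Jacobian of the change of metric flow-invariant, which is closer in spirit to your ``alternative route'' of re-running the averaging construction under a unimodularity constraint $\det A(x)\equiv1$. The difficulty you correctly locate near the singularities (uniform control of the correction as regular orbits approach $\sing(X)$) persists under that formulation as well, and is plausibly why the authors leave the statement as a conjecture.
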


\begin{example}
  \label{ex:sectionalstrong}
  An open class of examples in the setting of
  Theorem~\ref{mthm:sectadaptmetricsing} can be provided by modifying
  the classical Lorenz equations as follows.
  \begin{align*}
    \dot X = a(Y - X);
    \quad
    \dot Y = rX -Y -XZ;
    \quad
    \dot Z = XY - bZ;
    \quad
    \dot W = rW;
  \end{align*}
  where $(X,Y,Z)\in\RR^3$, $W\in\RR^k$ for some fixed $k>1$ and also
  $a=10, b=8/3$ and $r=28$.

  Let $G$ be the vector field on $\RR^3$ provided by the first
  three equations above on $(X,Y,Z)$ and $H$ the vector field on
  $\RR^k$ given by the last equation on $W$. Then the vector field on
  $\RR^{k+3}$ given by the full set of equations can be written as the
  direct product $J=G\times H$. We write $\phi_t$ for the flow of $G$
  and $\psi_t$ for the flow of $J$.

  Let $U$ be the open trapping region of the Lorenz attractor
  $\Lambda$ in $\RR^3$, so that $\Lambda=\cap_{t\in\RR}
  \phi_t(U)$. Then the compact invariant subset
  $\tilde\Lambda=\Lambda\times\{0^k\}=\cap_{t\in\RR}\psi_t(U\times V)$
  of the vector field $J$, where $V$ is any open neighborhood of
  $0^k$ in $\RR^k$, admits a sectional-hyperbolic splitting
  $E^s\oplus F$ with:
  \begin{itemize}
  \item $E^s$ corresponding to the one-dimensional stable (uniformly
    contracting) subbundle of $T_\Lambda\RR^3$; and
  \item $F$ admits a strongly partially hyperbolic splitting
    $F=E^c\oplus E^u$, where
    \begin{itemize}
    \item $E^c$ corresponds to the two-dimensional sectional-expanding
      subbundle of $T_\Lambda\RR^3$; and
    \item $E^u$ is the uniformly expanded $k$-dimensional subbundle
      $0^3\times\RR^k$.
    \end{itemize}
  \end{itemize}
  For the properties of the subbundles $E^s, E^c$ the reader may
  consult \cite{AraPac2010} and references therein.

  This example is clearly robust on the $C^1$ topology of vector
  fields: there exists a neighborhood $\U$ of $W$ so that each
  $Y\in\U$ admits a maximal invariant subset
  $\Lambda_Y=\cap_{t\in\RR}Y_t(U\times V)$ exhibiting a splitting
  $T_{\Lambda_Y}=E_{\Lambda_Y}\oplus F_{\Lambda_Y}$ with the same
  properties as above.

  Hence, Theorem~\ref{mthm:sectadaptmetricsing} ensures that the
  splitting of $T_{\Lambda_Y}$ admits an adapted sectional-hyperbolic
  metric for each $Y\in\U$.
\end{example}

It is now natural to conjecture that the conclusion of
Theorem~\ref{mthm:sectadaptmetricsing} holds in a much more general
setting.

\begin{conjecture}
  \label{conj:sectionalexp}
  Given a $p$-sectional-hyperbolic set $\Gamma$ for a $C^1$
  vector field $X$,  there exists a metric such
  that for some constant $\mu>0$ and all $t>0$
  \begin{itemize}
  \item $|DX_t \mid_{E_{x}}|\le e^{-\mu t}$;
  \item $|DX_t \mid_{E_{x}}| \cdot \big|(DX_t\mid_{F_x})^{-1}| \le e^{-\mu t} $; and
  \item $|\wedge^p DX_t(x) \vert_{L_x}| > e^{\mu t}$ for
    every $p$-dimensional linear subspace
    $L_x \subset F_x$.
  \end{itemize}
\end{conjecture}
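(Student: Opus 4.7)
The plan is to extend the strategy used in Theorems~\ref{mthm:singadaptmetric} and~\ref{mthm:sectadaptmetricsing} to general $p$-sectional hyperbolicity, by combining a Gourmelon-style averaging for the partial-hyperbolicity block with a multilinear-algebra argument on the $p$-th exterior power $\wedge^{p}F$. First, I would invoke the construction from \cite{Goum07} applied to the dominated splitting $E\oplus F$ to produce an inner product realising the uniform contraction along $E$ together with the first domination inequality; this delivers the first two bullets directly and independently of the sectional condition.

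For the third bullet, the natural attempt is a Ma\~{n}\'{e}-type forward averaging on $\wedge^{p}F$: given the norm $|\cdot|$ on $\wedge^{p}F$ induced by the metric on $F$ from the previous step, and picking $\mu<\lambda$ where $\lambda$ is the sectional-expansion rate, one sets, for $x\in\Gamma$ and $\omega\in\wedge^{p}F_{x}$,
\begin{align*}
|\omega|^{2}_{\mathrm{new},x} := \int_{0}^{T} e^{-2\mu t}\,\bigl|\wedge^{p}DX_{t}(x)\,\omega\bigr|^{2}\,dt,
\end{align*}
for $T$ sufficiently large. The cocycle identity for $\wedge^{p}DX_{t}$ together with a change of variable $u=s+t$, combined with the exponential growth of the integrand on the cone of decomposable $p$-vectors granted by sectional expansion, yields $|\wedge^{p}DX_{s}\omega|^{2}_{\mathrm{new}}\ge e^{2\mu s}|\omega|^{2}_{\mathrm{new}}$ for every decomposable $\omega$, which is precisely the content of the third bullet.

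The principal obstacle is that $p$-sectional expansion is a hypothesis only on the cone of decomposable $p$-vectors in $\wedge^{p}F$, and this cone has dimension $p(\dim F-p)+1$ inside the ambient space $\wedge^{p}F$ of dimension $\binom{\dim F}{p}$; the two coincide only when $p=1$ or $p\in\{\dim F-1,\dim F\}$. Hence the prescription above does not a priori arise from an inner product on $F$: it defines a norm only on a proper subcone of $\wedge^{p}F$, and to extract a Riemannian metric on $F$ one must produce an inner product on $F_{x}$ whose induced Gram determinants on every $p$-plane are controlled by the averaged quantity above. A natural route is to replace the exterior averaging by an averaging directly on $F$, weighted over the Grassmannian of $p$-planes $L\subset F_{x}$ via the operators $(DX_{t}|_{L})^{*}(DX_{t}|_{L})$ composed with the orthogonal projection onto $L$ and a Haar-type measure, then show that the resulting positive-definite operator on $F_{x}$ realises the sectional-expansion inequality on every $p$-plane simultaneously. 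Once such a metric on $F$ is produced, a further Gourmelon-type rescaling of the $E$-component restores the first two bullets without affecting the third. Establishing this uniform compatibility --- not only for $p$-planes aligned with the most expanded directions, but for every $p$-plane --- is the nonlinear difficulty intrinsic to $p$-sectional hyperbolicity and is precisely what keeps the statement conjectural.
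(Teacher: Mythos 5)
The statement you address is Conjecture~\ref{conj:sectionalexp}: the paper leaves it open and offers no proof, so there is no argument to compare your sketch against, and you rightly present a strategy plus an obstruction rather than a proof. Your obstruction, however, is misplaced. You argue that because $p$-sectional expansion speaks only about decomposable $p$-vectors, the averaged quantity $\int_0^T e^{-2\mu t}\,|\wedge^p DX_t\,\omega|^2\,dt$ ``defines a norm only on a proper subcone of $\wedge^p F$.'' In fact it is a positive-definite quadratic form on all of $\wedge^p F_x$, and --- this is the point you overlook --- the sectional-expansion hypothesis already controls the full conorm $\mm(\wedge^p DX_t\mid_{\wedge^p F_x})$, not merely its restriction to the decomposable cone, by the singular-value argument of Lemma~\ref{le:aux1}: a singular basis of $\wedge^p DX_t\mid_{\wedge^p F_x}$ consists of decomposable vectors, and the smallest singular value of $\wedge^p DX_t$ is attained on one of them. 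So both your Ma\~n\'e averaging and a direct application of \cite{Goum07} to the partially hyperbolic cocycle $\wedge^p DX_t$ on $\wedge^p T_\Gamma M$ do produce an honest adapted inner product on $\wedge^p T_\Gamma M$, with no further difficulty at that stage.

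The genuine obstacle, and the one that keeps the statement a conjecture, is the lift from $\wedge^p F$ back to $F$: the third bullet requires the adapted inner product on $\wedge^p F$ to be the one \emph{induced} by some inner product on $F$ itself, and for $1<p<\dim F-1$ this fails generically. The map $GL(F_x)\to GL(\wedge^p F_x)$, $A\mapsto\wedge^p A$, is far from surjective --- a dimension count gives $n^2$ against $\binom{n}{p}^2$ with $n=\dim F$ --- so most inner products on $\wedge^p F_x$ do not come from one on $F_x$. Lemma~\ref{lemma189} carries out exactly this lift, but only for $p=\dim F-1$, where the two groups have the same dimension. Theorem~\ref{mthm:singadaptmetric} exploits $p=\dim F$ (then $\wedge^p F$ is a line bundle, and the lift is trivial), while Theorems~\ref{mthm:sectadaptmetricunif} and~\ref{mthm:sectadaptmetricsing} sidestep the problem by building the metric on $F$ first and then verifying the $\wedge^2$ bounds with singular bases via Lemma~\ref{le:aux1}, using extra structure (the line bundle $E^X$, or a strongly partially hyperbolic splitting $E^c\oplus E^u$ with $\dim E^c=2$). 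General $p$-sectional hyperbolicity supplies none of this extra structure, which is exactly where your proposal --- and every similar one --- stalls.
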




\subsection{Organization of the text}
\label{sec:organization-text}

In the present section we provided preliminary definitions in order to
present the statements of the main results together with some
applications and conjectures. In
Section~\ref{sec:statem-prelim-result} we state some auxiliary
results, definitions and prove some useful properties of exterior
products.  In Section~\ref{sec:proof} we give the proofs of our
theorems, divided into three
subsections~\ref{sec:existence-singul-ada},
\ref{sec:uniformly-hyperb-set} and~\ref{sec:open-class-sectional}, one
for each of the Main Theorems~\ref{mthm:singadaptmetric},
\ref{mthm:sectadaptmetricunif} and~\ref{mthm:sectadaptmetricsing},
respectively.

\subsection*{Acknowledgements}
\label{sec:acknowledgements}

This work is part of the PhD. Thesis of V. Coelho developed at the
Mathematics Department of the Federal University of Bahia (UFBA) under
the supervision of L. Salgado. The authors would like to thank the
facilities provided by the Mathematics Institute for the PhD. Program
and the partial financial support from several federal and state
agencies to the Faculty and Students of this Program.


\section{Auxiliary results}
\label{sec:statem-prelim-result}

\subsection{Linear multiplicative cocycles over flows} \label{sec:dominat-linear-multi}

Let $A:G\times\RR\to G$ be a smooth map given by a
collection of linear bijections
\begin{align*}
  A_t(x): G_x\to G_{X_t(x)}, \quad x\in\Gamma, t\in\RR,
\end{align*}
where $\Gamma$ is the base space of the finite dimensional
vector bundle $G$, satisfying the cocycle
property
\begin{align*} A_0(x)=Id, \quad
  A_{t+s}(x)=A_t(X_s(x))\circ A_s(x), \quad x\in\Gamma,
  t,s\in\RR,
\end{align*}
with $\{X_t\}_{t\in\RR}$ a complete smooth flow over
$M\supset\Gamma$.  We note that for each fixed $t>0$ the map
$A_t: G\to G, v_x\in G_x \mapsto A_t(x)\cdot v_x\in
G_{X_t(x)}$ is an automorphism of the vector bundle $G$.

The natural example of a linear multiplicative cocycle over
a smooth flow $X_t$ on a manifold is the derivative cocycle
$A_t(x)=DX_t(x)$ on the tangent bundle $G=TM$ of a finite
dimensional compact manifold $M$. Another example is given
by the exterior power $A_t(x)=\wedge^kDX_t$ of $DX_t$ acting
on $\wedge^k TM$, the family of all $k$-vectors on the
tangent spaces of $M$, for some fixed $1\le k\le\dim G$.

It is well-known that the exterior power of an inner product space has a naturally induced inner product and thus a norm. Thus $G=\wedge^k TM$ has an induced norm from the Riemannian metric of $M$. For more details see e.g. \cite{arnold-l-1998}.

In what follows we assume that the vector bundle $G$ has a
smoothly defined inner product in each fiber $G_x$ which
induces a corresponding norm $\|\cdot\|_x, x\in\Gamma$.

\begin{definition}\label{def:domcocycle}
  A continuous splitting $G=E\oplus F$ of the vector bundle
  $G$  into a pair of subbundles is \emph{dominated} (with
  respect to the automorphism $A$ over $\Gamma$) if
  \begin{itemize}
  \item the splitting is \emph{invariant}: $A_t(x)\cdot
    E_x=E_{X_t(x)}$ and $A_t(x)\cdot F_x=F_{X_t(x)}$ for all
      $x\in \Gamma$ and $t\in\RR$; and
    \item there are positive
  constants $K, \lambda$ satisfying
  \begin{align}\label{eq:def-dom-split-cocycle}
    \|A_t|_{E_x}\|\cdot\|A_{-t}|_{F_{X_t(x)}}\|<Ke^{-\la
      t}, \ \textrm{for all} \ x \in \Gamma, \ \textrm{and
      all} \,\,t> 0.
  \end{align}
  \end{itemize}
\end{definition}

We say that the splitting $G=E\oplus F$ is \emph{partially
  hyperbolic} if it is dominated and the subbundle $E$ is
uniformly contracted: $\|A_t\mid E_x\|\le Ce^{-\mu t}$ for
all $t>0$ and suitable constants $C,\mu>0$.

\subsection{Exterior powers}\label{sec:ext-pow}

Let $V$ be a vector space of dimension $N$.  The $k$th exterior power
of $V$, denoted $\wedge^k(V)$, is the vector space spanned by
alternating (exterior) products of the form
\begin{align*}
  x_{1}\wedge x_{2}\wedge \cdots \wedge x_{k},
  \quad x_{i}\in V, i=1,2,\ldots ,k.
\end{align*}
Every $w\in\wedge^k(V)$ is said to be a $k$-vector. If $w$ can be
expressed as an exterior product of $k$ elements of $V$, then $w$ is
\emph{decomposable}. Decomposable $k$-vectors span $\wedge^k(V)$ but
not every element of $\wedge^k(V)$ is decomposable. \footnote{For
  example, in $\RR^4$ the following $2$-vector (a symplectic form) is
  not decomposable: $e_{1}\wedge e_{2}+e_{3}\wedge e_{4}.$ } Since the
exterior product is alternating, then $\wedge^{N+r}V$ is the trivial
vector space for each integer $r>0$.

\begin{remark}\label{remark3.1}
\begin{enumerate}
\item The dimension of space $\wedge^{r} V$ is
  $\dim \wedge^{r} V = \binom{N}{r}$ for each $1\le r\le N$.  If
  $\{e_{1},\cdots,e_{N}\}$ is a basis of $V$, so the set
  $\{e_{k_{1}}\wedge \cdots \wedge e_{k_{r}}: 1 \leq k_{1} < \cdots <
  k_{r} \leq N\}$ is a basis in $\wedge^{r} V$ with $\binom{N}{r}$
  elements.
\item Each $r$-vector $v_1\wedge\cdots\wedge v_r$ can be
  represented by
  $v_1\wedge \tilde{v_2}\wedge\cdots\wedge\tilde{v_r}$,
  where $v_1,\tilde{v_2}.\dots, \tilde{v_r}$ are mutually
  orthogonal and each $\tilde{v_i}$ belongs to the span of
  $\{v_1,\dots,v_i\}$, for each $i=2,\dots,r$ (just apply
  the Gram-Schmidt ortogonalization procedure).
\item If $V$ admits  an inner product $\langle \cdot , \cdot \rangle$, then
  the bilinear extension of
    $$
    \langle u_{1} \wedge \cdots \wedge u_{r} , v_{1} \wedge
    \cdots \wedge v_{r} \rangle :=\det(\langle u_{i},v_{j}
    \rangle)_{r \times r}
    $$
    defines an inner product in $\wedge^{r}V$. In particular,
    $\|u_{1} \wedge \cdots \wedge u_{r}\| = \sqrt{\det(\langle
      u_{i},u_{j} \rangle)_{r \times r}}$ is a norm on $\wedge^rV$ and
    is also the volume of the $r$-dimensional parallelepiped $H$
    spanned by $u_{1},\cdots,u_{r}$:
    $\vol(u_{1},\cdots,u_{r}) = \vol(H) = \det(H)= |\det
    (u_{1},\cdots,u_{r})|$.
  \item If $A: V \to V$ is a linear operator
    then the linear extension of
    $\wedge^{r} A(u_{1}\wedge\cdots\wedge u_{r}) = A(u_{1})
    \wedge \cdots\wedge A(u_{r})$ defines a linear operator
    $\wedge^{r}A$ on $\wedge^{r}V$.

  \item Let $A: V \to V$ be a linear operator,
    $\wedge^{r} A: \wedge^{r}V \to \wedge^{r}V$ and $G$ a subspace of
    $V$ spanned by $v_{1},\cdots ,v_{s} \in V$. Define $H :=
    A|_{G}$. Then $H(G)$ is spanned by $A(v_{1}),\cdots,A(v_{s})$ and
    $|\det A|_{G}| = \vol (A|_{G}) = \vol (H) = \vol
    (A(v_{1}),\cdots,A(v_{s})) = \|A(v_{1})\wedge \cdots \wedge
    A(v_{s})\| = \|\wedge^{s}A(v_{1}\wedge \cdots \wedge v_{s})\|$.
  \end{enumerate}
\end{remark}

\subsubsection{Exterior power of linear multiplicative cocycles}
\label{sec:exteri-power-linear}

The algebraic construction of exterior power of a vector space can be
applied to any given fiber of a vector bundle and thus obtain the
exterior power of the vector bundle, whose fibers are the
corresponding exterior powers of the original fibers.

It is natural to consider the linear multiplicative cocyle
$\wedge^{k}DX_t$ on $\wedge^kT_UM$ over the flow $X_t$ of $X$ on $U$,
that is, for any $k$ choice, $u_{1},u_{2},\cdots,u_{k}$ of vectors in
$T_x M, x\in U$ and $t\in\RR$ such that $X_t(x)\in U$ we set
\begin{align*}
  (\wedge^{k} DX_t)
  \cdot(u_{1}\wedge u_{2} \wedge \cdots \wedge u_{k} )
  =(DX_t\cdot
  u_{1})\wedge (DX_t\cdot
  u_{2})\wedge \cdots \wedge (DX_t\cdot
  u_{k})
\end{align*}
see e.g. \cite[Chapter 3, Section 2.3]{arnold-l-1998} or
\cite{Winitzki12} for more details and standard results on
exterior algebra and exterior products of linear operators.

\begin{remark}
  \label{rmk:rvolumeflow}
  In particular, if $DX_{t}(u_{i}) = v_{i}(t) = v_{i}$, where $G$ is a
  subspace of $T_xM$ spanned by $u_{1},\cdots,u_{r} \in T_xM$, then
  $H=DX_t(G)$ is spanned by $v_{1},\cdots,v_{r}$. Thus
\begin{align*}
  |\det(DX_{t}|_{G})|
  &=
    \vol(DX_{t}(u_{1}),\cdots,DX_{t}(u_{r}))
  \\
  &=\
    \|DX_{t}(u_{1}) \wedge \cdots \wedge DX_{t}(u_{r})\|
    =
    \|\wedge^{r} DX_{t}(u_{1} \wedge \cdots \wedge u_{r})\|.
\end{align*}
\end{remark}

\subsubsection{Conditions for sectional expansion}
\label{sec:condit-section-expan}

  Let $\Gamma$ be a compact invariant set of $X$ such that
  it exhibits a continuous $DX_t$-invariant splitting
  $E \oplus F$ on a subbundle of $T_{\Gamma}M$, with
  $E_x \neq \{0\}$, $F_x \neq \{0\}$ for every
  $x \in \Gamma$.

  If $\{e_1,\dots,e_\ell\}$ is a basis for $E$ and $\{f_1,\dots,f_m\}$
  is a basis for $F$ (where the $e_i$ amd $f_k$ are vector fields over
  $\Gamma$), then $\wt F=\wedge^kF$ generated by
  $\{f_{i_1}\wedge\dots\wedge f_{i_k}\}_{1\le i_1<\dots<i_k\le m}$ is
  a subbundle of $\wedge^kT_\Gamma M$ which naturally is
  $\wedge^k DX_t$-invariant by construction.

In addition, $\tilde E=E\wedge\big(\wedge^{k-1}(E\oplus F)\big)$
generated by all the elementary $k$-vector of the form
\begin{align*}
e_{i_1}\wedge\dots\wedge e_{i_h}\wedge f_{j_1}\wedge\dots\wedge
f_{j_{k-h}}
\end{align*}
for $1\le h\le k $ and $1\le i_1<\dots<i_h\le k$,
$1\le j_1<\dots<j_{k-h}\le m$ (with no $f$ vectors if $h=k$), is
also a subbundle of $\wedge^kT_\Gamma M$ which is
$\wedge^kDX_t$-invariant.

Moreover, $\wt E\oplus \wt F=\wedge^k T_\Gamma M$ gives
a splitting of the $k$th exterior power of the subbundle $T_\Gamma
M$. We stress that each subbundle $\wt{E}_x, \wt{F}_x$ at any given
$x\in\Gamma$ is generated by basis of both $E_x$ and $F_x$ in a
pointwise construction.

Let us assume that $\dim F_x \geq 2$ is constant for $x\in\Gamma$ and
there are positive constants $C , \lambda$ such that for every
$x \in \Gamma$ and every $p$-dimensional linear subspace
$L_x \subset F_x$ one has
$\vert \det (DX_{-t} \vert_{L_x})\vert < C e^{-\la t}, \textrm{ for
  all } t>0$. That is, $F$ is $p$-sectionally expanding.

By the above discussion, this means that
$\|\wedge^{k} DX_{-t}(u_{1} \wedge \cdots \wedge u_{p})\| \leq C
e^{-\la t} \|u_{1} \wedge \cdots \wedge u_{p}\|$ for any orthonormal
basis $u_{1},\cdots,u_{p}$ of $F_{x}$.

We are going to show that this inequality for a certain basis implies
that $\|\wedge^{k} DX_{-t}(w)\| \leq C e^{-\la t} \|w\|$ for all
$w \in \wedge^{k} F_{x}$. Thus, \emph{it is enough to obtain the
  relevant bounds on certain elementary $k$-vectors of the
  $k$-exterior power extension.}

We recall that given a linear isomorphism
$A:(E,\langle\cdot,\cdot\rangle)\to (F,[\cdot,\cdot])$
between finite dimensional Euclidean vector spaces, the
Singular Value Decomposition provides:
\begin{itemize}
\item $d=\dim E=\dim F$ non-negative reals
  $0\le\lambda_1\le\ldots\le\lambda_d$;
\item a pair of orthonormal basis $\U=\{u_1,\dots, u_d\}$ of
  $E$ and $\V=\{v_1,\dots,v_d\}$ of $F$ such that
  $Au_i=\lambda_iv_i, i=1,\dots,d$.
\end{itemize}
We say that $\U$ is a \emph{singular basis} and
$\lambda_1,\dots,\lambda_d$ are the corresponding
\emph{singular values} for the given linear transformation.
Let us define
$\U^p=\{u_{i_1}\wedge\cdots\wedge u_{i_p}: 1\le
i_1<\cdots<i_p\le d\}$ for $1<p\le d$ and observe that
$\U^p$ is an orthonormal basis for $\wedge^pE$ and,
likewise, $\V^p$ is an orthonormal basis for $\wedge^pF$,
with respect to the induced exterior inner
products. Moreover
\begin{align*}
  \wedge^pA\cdot u_{i_1}\wedge\cdots\wedge u_{i_p}
  =
  (\lambda_{i_1}\cdots\lambda_{i_p})
  \cdot v_{i_1}\wedge\cdots\wedge v_{i_p}
\end{align*}
for all relevant indices, and the number of elements in
$\U^p$ is $\binom{d}{p}=\dim \wedge^pE$. Hence, $\U^p$ is a
singular basis for the operator $\wedge^pA$ and for each
$u\in\U^p$ the corresponding singular values are
$ \lambda(u) = \lambda(u_{i_1}\wedge\cdots\wedge u_{i_p}) =
\lambda_{i_1}\cdots\lambda_{i_p}.  $

\begin{remark}
  \label{rmk:singularbasisplit}
  If $E=E_1\oplus E_2$ and $F=F_1\oplus F_2$ are orthogonal
  splittings of $(E,\langle\cdot,\cdot\rangle)$ and
  $(F,[\cdot,\cdot])$ preserved by $A$, that is,
  $A(E_i)=F_i, i=1,2$, then given singular basis $\U_i$ for
  $A\mid_{E_i}, i=1,2$ we have that $\U_1\cup\U_2$ is a
  singular basis for $A$. Likewise, $(\U_1\cup\U_2)^p$ is a
  singular basis for $\wedge^pA$ for any $1<p\le d$.
\end{remark}

\begin{lemma}\label{le:aux1}
  Let $A:(E,\langle\cdot,\cdot\rangle)\to (F,[\cdot,\cdot])$
  be a linear isomorphism between $d$-dimensional Euclidean
  vector spaces, let $\U$ be a singular basis for $A$ and
  fix $1<p\le d$. If there exists $\xi>0$ satisfying
  $\|\wedge^{p} A \cdot u\| \leq \xi$ for $u \in \U^p$,
  then
  $\|\wedge^{p} A\| =\sup_{w\in\wedge^{p}E} \frac{\|\wedge^pA\cdot
  w\|}{\|w\|} \leq \xi$. Moreover, if $p<d$, then
  $\|\wedge^{p+1}A\|\le\xi\|A\|$.
\end{lemma}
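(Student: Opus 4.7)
The plan is to exploit the fact, already set up in the paragraph preceding the statement, that $\U^p$ is a \emph{singular} basis for $\wedge^pA$. First I would observe that because $A u_i = \lambda_i v_i$ with $\{v_i\}$ orthonormal and $\{u_i\}$ orthonormal, the operator $\wedge^pA$ maps each basis element $u_{i_1}\wedge\cdots\wedge u_{i_p}$ of $\U^p$ to $\lambda_{i_1}\cdots\lambda_{i_p}\,v_{i_1}\wedge\cdots\wedge v_{i_p}$, which is an orthogonal vector of length $\lambda_{i_1}\cdots\lambda_{i_p}$. In other words, $\wedge^pA$ is diagonal in the orthonormal basis $\U^p$ (with target basis $\V^p$), and its singular values are exactly the products $\lambda(u)=\lambda_{i_1}\cdots\lambda_{i_p}$.

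Once this is made explicit, the first inequality is immediate: for an arbitrary $w=\sum_{u\in\U^p}c_u\,u\in\wedge^pE$ one has
\begin{align*}
\|\wedge^pA\cdot w\|^2=\sum_{u\in\U^p}c_u^2\,\lambda(u)^2\le \xi^2\sum_{u\in\U^p}c_u^2=\xi^2\|w\|^2,
\end{align*}
because by hypothesis $\lambda(u)=\|\wedge^pA\cdot u\|\le\xi$ for every $u\in\U^p$. Taking the supremum gives $\|\wedge^pA\|\le\xi$.

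For the second inequality I would use the same diagonal description one level higher. Since $\U^{p+1}$ is a singular basis for $\wedge^{p+1}A$, its norm equals the largest product $\lambda_{j_1}\cdots\lambda_{j_{p+1}}$ with $j_1<\cdots<j_{p+1}$. I would split such a product as
\begin{align*}
\lambda_{j_1}\cdots\lambda_{j_{p+1}}=\bigl(\lambda_{j_1}\cdots\lambda_{j_p}\bigr)\cdot\lambda_{j_{p+1}},
\end{align*}
bound the first factor by $\xi$ using the hypothesis applied to the $p$-vector $u_{j_1}\wedge\cdots\wedge u_{j_p}\in\U^p$, and bound $\lambda_{j_{p+1}}$ by $\max_i\lambda_i=\|A\|$. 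Taking the maximum over $(j_1,\dots,j_{p+1})$ then yields $\|\wedge^{p+1}A\|\le\xi\|A\|$.

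There is no real obstacle here beyond being careful that the singular values of $\wedge^pA$ with respect to the induced inner products on $\wedge^pE$ and $\wedge^pF$ are exactly the products $\lambda_{i_1}\cdots\lambda_{i_p}$; that is a direct consequence of Remark~\ref{remark3.1}(3) together with the orthonormality of $\{v_i\}$. After this is noted, the lemma reduces to elementary estimates on products of nonnegative real numbers.
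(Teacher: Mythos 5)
Your proof is correct and follows essentially the same route as the paper: the first inequality is the same orthonormal-expansion argument, and the second is the same observation that each singular value of $\wedge^{p+1}A$ factors as a singular value of $\wedge^pA$ times a singular value of $A$. If anything your write-up is slightly more explicit than the paper in how it splits the product $\lambda_{j_1}\cdots\lambda_{j_{p+1}}$, but the underlying idea is identical.
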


\begin{proof}
  Fixing $w\in\wedge^pE$ we have
  $w=\sum_{u\in\U^p}\beta(w,u)u$ for some scalars
  $\beta(w,u)$ so that $\sum_{u\in\U^p}\beta(w,u)^2=\|w\|^2$.
  Thus, by definition of singular basis
  \begin{align*}
    \|\wedge^pA\cdot w\|^2
    &=
      \|\sum_{u\in\U^p}\beta(w,u)\cdot\big(\wedge^pA\cdot u\big)\|^2
      =
      \sum_{u\in\U^p}\beta(w,u)^2\|\wedge^pA\cdot u\|^2
    \\
    &\le
      \sum_{u\in\U^p}\beta(w,u)^2\xi^2
      =
      \xi^2\|w\|^2.
  \end{align*}
  Since $w\in\wedge^{p}E$ was arbitrarily chosen, the proof of
  the first statement of the lemma is complete.

  Finally, note that the assumption implies that
  $\lambda(u)\le\xi$ for all $u\in\U^p$. Then the
  singular values of $\wedge^{p+1}A$ are products of
  singular values of $A$ by some $\lambda(u)$ for
  $u\in\U^p$. Since $\lambda_i\le\|A\|$ we get
  $\|\wedge^{p+1}A\|\le
  \|A\|\cdot\|\wedge^{p}A\|\le\xi\|A\|$, completing the
  proof of the lemma.
\end{proof}

\subsection{Dominated/partial/sectional-hyperbolic splittings and
  exterior powers}\label{le:exteriorprod}

In \cite{arsal2015a}, the first and last authors together proved the
following relation between a dominated splitting and its exterior
power.

\begin{theorem}\label{bivectparthyp2} \cite[Theorem A]{arsal2015a}
  Assume that $T_{\Gamma}M = E \oplus F$ is a $DX_{t}$ invariant
  splitting with $\dim F=k\ge2$. The splitting
  $T_{\Gamma}M = E\oplus F$ is dominated for $DX_t$ if, and only if,
  $\wedge^k T_{\Gamma}M = \widetilde E\oplus \widetilde F$ is a
  dominated splitting for $\wedge^k DX_t$.
\end{theorem}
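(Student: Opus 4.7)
The approach I would take is to work through singular value decompositions and translate operator-norm bounds for $DX_t$ on $E$ and $F$ into corresponding bounds for $\wedge^k DX_t$ on $\wt E$ and $\wt F$, using Remark~\ref{rmk:singularbasisplit} and Lemma~\ref{le:aux1}. First, I would fix $x\in\Gamma$ and $t>0$ and apply the singular value decomposition separately to $DX_t|_{E_x}$ and $DX_t|_{F_x}$, which is possible since $E_x$ and $F_x$ are $DX_t$-invariant, obtaining singular bases $\mathcal{U}_E$ of $E_x$ and $\mathcal{U}_F$ of $F_x$ with associated singular values $\nu_1\le\dots\le\nu_{d_E}$ and $\sigma_1\le\dots\le\sigma_{d_F}$. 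By Remark~\ref{rmk:singularbasisplit}, $\mathcal{U}_E\cup\mathcal{U}_F$ is a singular basis for $DX_t|_{(E\oplus F)_x}$ and its $k$th exterior power is a singular basis for $\wedge^k DX_t$. Its elementary $k$-vectors split naturally into those lying in $\wedge^k F_x=\wt F_x$ and those containing at least one factor from $\mathcal{U}_E$ (which then span $\wt E_x$), so by Lemma~\ref{le:aux1} the operator norms $\|\wedge^k DX_t|_{\wt E}\|$ and $\|\wedge^k DX_{-t}|_{\wt F}\|$ reduce to estimates on singular values along these basis elements.

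For the implication $(\Rightarrow)$, I would observe that any singular value of $\wedge^k DX_t|_{\wt E_x}$ has the form $\nu_{i_1}\cdots\nu_{i_h}\sigma_{j_1}\cdots\sigma_{j_{k-h}}$ with $h\ge 1$, while the smallest singular value of $\wedge^k DX_t|_{\wt F_{X_t(x)}}$ equals $\sigma_1\sigma_2\cdots\sigma_k$. The associated ratio rearranges as
\[
  \prod_{a=1}^{h}\frac{\nu_{i_a}}{\sigma_a}
  \cdot
  \prod_{b=1}^{k-h}\frac{\sigma_{j_b}}{\sigma_{h+b}},
\]
and each factor of the first product is bounded by $\nu_{d_E}/\sigma_1=\|DX_t|_{E_x}\|\cdot\|DX_{-t}|_{F_{X_t(x)}}\|\le Ke^{-\lambda t}$ by the domination hypothesis. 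The second product involves only ratios of singular values of $DX_t|_F$, which I would bound by exploiting that the $\sigma_{j_b}$ and $\sigma_{h+b}$ come from the same spectrum together with a priori cocycle bounds coming from compactness of $\Gamma$ and continuity of $DX$; combining both estimates would yield domination of $\wt E\oplus\wt F$ with (possibly modified) constants.

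For the converse, given unit vectors $u\in E_x$ and $v\in F_x$, I would pick an orthonormal family $\{w_2,\dots,w_k\}\subset F_x$ and form decomposable $k$-vectors $\alpha=u\wedge w_2\wedge\dots\wedge w_k\in\wt E_x$ and $\beta=v\wedge w_2\wedge\dots\wedge w_k\in\wt F_x$. Expressing $\|\wedge^k DX_t\alpha\|$ and $\|\wedge^k DX_t\beta\|$ as volumes of the parallelepipeds spanned by the images of the respective factors, the common volume contribution of $DX_tw_2,\dots,DX_tw_k$ cancels in the ratio, and the $\wedge^k$-domination bound translates into $\|DX_tu\|/\|DX_tv\|\le K'e^{-\lambda't}$ up to harmless angle factors, from which the domination of $E\oplus F$ follows by taking suprema over $u$ and $v$. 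The main obstacle, in my view, is the combinatorial pairing of singular values in $(\Rightarrow)$: controlling the auxiliary product of ratios $\sigma_{j_b}/\sigma_{h+b}$ without spoiling the exponential decay will require either an optimal matching via a rearrangement-type inequality or a careful comparison of each ratio with the domination gap; this is where the paper \cite{arsal2015a} invests its main technical effort.
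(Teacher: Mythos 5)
The paper does not prove Theorem~\ref{bivectparthyp2} itself; it simply cites \cite[Theorem A]{arsal2015a} and uses it as a black box. So there is no ``paper's own proof'' to compare against, and I will assess your proposal directly.

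Your singular-value decomposition framework is the right one and the reduction to elementary $k$-vectors via Remark~\ref{rmk:singularbasisplit} and Lemma~\ref{le:aux1} is exactly the relevant machinery (after one preliminary step you omit: replace the given metric by one making $E\perp F$ along $\Gamma$, which does not affect the domination property up to constants, so that Remark~\ref{rmk:singularbasisplit} actually applies). However, your forward direction has a genuine gap which is not merely a technicality you can defer to the cited reference: the second product $\prod_{b=1}^{k-h}\sigma_{j_b}/\sigma_{h+b}$ is \emph{not} bounded for general $k$, and in fact the implication $(\Rightarrow)$ is \emph{false} for $k<\dim F$. Take a fixed point with $\dim E=1$, $\dim F=3$, $k=2$, and $DX_1$ diagonalizable with eigenvalue $1/2$ on $E$ and eigenvalues $1,1,4$ on $F$: the splitting $E\oplus F$ is dominated with rate $\log 2$, yet $\|\wedge^2DX_t\mid_{\widetilde E}\|\cdot\|\wedge^2DX_{-t}\mid_{\widetilde F}\|=2^t\cdot 1\to\infty$. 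The statement is only meaningful for $k=\dim F$ (this is what makes Lemma~\ref{bivectparthyp1} and the singular-hyperbolicity applications work, since $\|\wedge^{\dim F}DX_t\mid_F\|=|\det DX_t\mid_F|$). Once $k=\dim F=:m$ is imposed, your ``obstacle'' evaporates for an elementary reason you should have been able to spot: the indices $j_1<\dots<j_{k-h}$ are $m-h$ distinct elements of $\{1,\dots,m\}$, and $\sigma_{h+1}\cdots\sigma_m$ is the product of the $m-h$ \emph{largest} singular values of $DX_t\mid_F$, so $\sigma_{j_1}\cdots\sigma_{j_{m-h}}\le\sigma_{h+1}\cdots\sigma_m$ and the second product is automatically at most $1$. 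No rearrangement inequality or hidden technical effort is needed; the whole forward bound collapses to $\prod_{a=1}^h(\nu_{i_a}/\sigma_a)\le(Ke^{-\lambda t})^h$.

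For the converse, the ``harmless angle factors'' remark is not safe as stated. Writing $\alpha=u\wedge w_2\wedge\dots\wedge w_k$ and $\beta=v\wedge w_2\wedge\dots\wedge w_k$, the ratio $\|\wedge^kDX_t\alpha\|/\|\wedge^kDX_t\beta\|$ does \emph{not} reduce to $\|DX_tu\|/\|DX_tv\|$ because $DX_tu$ and $DX_tv$ are generally not orthogonal to the span of the $DX_tw_j$, and their orthogonal components can shrink relative to their norms. One must instead do what Lemma~\ref{le:final} in this paper does: pick auxiliary vectors so that the \emph{images} under $DX_t$ are orthonormal and orthogonal to $DX_tu$ (resp.~$DX_{-t}v$), recover the quantity $\|DX_tu\|\cdot\|DX_{-t}v\|$ exactly, bound the residual norms by compactness on a bounded $t$-interval, and then extend to all $t>0$ via subadditivity (Lemma~\ref{le:subaddinduction}). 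Your outline has the right shape but misidentifies where the technical work actually lies: it is in the converse direction (angle/norm control and the extension to all $t$), not in the forward rearrangement, which is elementary once $k=\dim F$ is recognized.
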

Hence, the existence of a dominated splitting
$T_\Gamma M=E_\Gamma\oplus F_\Gamma$ over the compact
$X_t$-invariant subset $\Gamma$, is equivalent to that the bundle
$\wedge^{k}T_\Gamma M$ admits a dominated splitting with
respect to
$\wedge^{k}DX_t: \wedge^{k}T_\Gamma M \to \wedge^{k}T_\Gamma
M $.

As a consequence, they obtain the next characterization of
three-dimensional singular sets.

\begin{corollary}\cite[Corollary 1.5]{arsal2015a}
  \label{corA1} Assume that $T_{\Gamma}M = E \oplus F$ is a $DX_{t}$
  invariant splitting.  Suppose that $M$ has dimension $3$, $E$ is
  uniformly contracted by $DX_{t}$ and $\dim F=2$. Then $E \oplus F$
  is a singular hyperbolic splitting for $DX_{t}$ if, and only if,
  $ \widetilde E \oplus \widetilde F$ is a partially hyperbolic
  splitting for $\wedge^{2} DX_t$ such that $\widetilde F$ is
  uniformly expanded by $\wedge^{2} DX_t$.
\end{corollary}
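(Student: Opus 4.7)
The plan is to combine Theorem~\ref{bivectparthyp2} (domination transfers to the exterior power) with Remark~\ref{rmk:rvolumeflow} (the $2$-exterior norm on a $2$-dimensional subbundle equals the absolute Jacobian). I first fix dimensions: in the nondegenerate case, $\dim M=3$ together with $E$ uniformly contracted forces $\dim E=1$ and $\dim F=2$, so $\wedge^2 T_\Gamma M$ is $3$-dimensional with $\wt F=\wedge^2 F$ of dimension $1$ and $\wt E=E\wedge(E\oplus F)$ of dimension $2$. Both subbundles are nontrivial, hence constitute a legitimate splitting.

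The dominated splitting part is then free: by Theorem~\ref{bivectparthyp2}, $E\oplus F$ is dominated for $DX_t$ if and only if $\wt E\oplus\wt F$ is dominated for $\wedge^2 DX_t$. This handles both directions of the equivalence for the domination condition simultaneously.

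The substantive step is to identify volume expansion of $F$ with uniform expansion of the $1$-dimensional $\wt F$. Since $\wt F_x$ is a line, its operator norm under $\wedge^2 DX_t$ is computed on any unit $f_1\wedge f_2$ coming from an orthonormal basis $\{f_1,f_2\}$ of $F_x$, and Remark~\ref{rmk:rvolumeflow} gives
\begin{align*}
\|\wedge^2 DX_t(f_1\wedge f_2)\|=|\det(DX_t\mid_{F_x})|.
\end{align*}
In a $1$-dimensional invariant subbundle, uniform expansion is exactly exponential growth of this scalar, so uniform expansion of $\wt F$ by $\wedge^2 DX_t$ is equivalent to volume expansion of $F$ by $DX_t$.

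Finally I assemble the two directions. If $E\oplus F$ is singular hyperbolic, then it is dominated and $F$ is volume expanding, so by the two observations above $\wt E\oplus\wt F$ is dominated and $\wt F$ is uniformly expanded, i.e.\ partially hyperbolic with uniformly expanded $\wt F$. Conversely, such partial hyperbolicity of $\wt E\oplus\wt F$ delivers domination of $E\oplus F$ together with volume expansion of $F$, and combined with the standing hypothesis that $E$ is uniformly contracted this yields singular hyperbolicity. The main obstacle is really just the dimensional bookkeeping for the $2$-exterior power in dimension $3$; the analytic content is already packaged inside Theorem~\ref{bivectparthyp2} and in the volume interpretation of the exterior norm.
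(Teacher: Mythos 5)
Your proof is correct and follows the same route the paper takes: the paper states Corollary~\ref{corA1} as a citation of~\cite{arsal2015a} and immediately generalizes it to Lemma~\ref{bivectparthyp1}, whose content is precisely the two ingredients you isolate (domination transfers via Theorem~\ref{bivectparthyp2}, and volume expansion of $F$ is equivalent to uniform expansion of $\wt F$ because the exterior norm on a top-degree power equals the absolute Jacobian). Your dimensional bookkeeping in the ambient dimension $3$ with $k=2$ — giving $\dim\wt F=1$ and $\dim\wt E=2$ — is exactly what makes the $1$-dimensional expansion statement coincide with the determinant growth, so the argument is complete.
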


The result below generalizes Corollary~\ref{corA1} to
arbitrary $n$ and $k$. 

\begin{lemma}\cite[Lemma 3.4]{salvinc2017} \label{bivectparthyp1}
  Assume that $T_{\Gamma}M = E \oplus F$ is a $DX_{t}$-invariant
  splitting with $\dim M=n$ and $\dim F=k\ge2$.  The subbundle
  $F_{\Gamma}$ is volume expanding by $DX_t$ if, and only if,
  $\widetilde F$ is uniformly expanded by $\wedge^{k} DX_t$.

  In particular, $E \oplus F$ is a singular-hyperbolic
  splitting, where $F$ is volume expanding for $DX_{t}$ if,
  and only if, $ \widetilde E \oplus \widetilde F$ is a
  partially hyperbolic splitting for $\wedge^{k} DX_t$ such
  that $\widetilde F$ is uniformly expanded by
  $\wedge^{k} DX_t$.
\end{lemma}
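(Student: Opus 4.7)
The plan rests on a single dimension count: since $\dim F_x=k$ is constant along $\Gamma$, the subbundle $\widetilde F=\wedge^k F$ of $\wedge^k T_\Gamma M$ is a \emph{line} bundle, because $\dim \wedge^k F_x=\binom{k}{k}=1$. Consequently $\wedge^k DX_t$ acts on each fiber $\widetilde F_x$ as multiplication by a single scalar, and the whole first statement reduces to identifying the magnitude of that scalar with $|\det(DX_t|_{F_x})|$.

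To do so, I would fix $x\in \Gamma$, pick an orthonormal basis $\{u_1,\dots,u_k\}$ of $F_x$, and observe that $u_1\wedge\cdots\wedge u_k$ is then a unit vector spanning $\widetilde F_x$. Remark~\ref{rmk:rvolumeflow} gives
\[
\bigl\|\wedge^k DX_t(u_1\wedge\cdots\wedge u_k)\bigr\|=\bigl|\det(DX_t|_{F_x})\bigr|,
\]
so on the one-dimensional fiber $\widetilde F_x$ we obtain $\|\wedge^k DX_t|_{\widetilde F_x}\|=|\det(DX_t|_{F_x})|$ and hence also $\|(\wedge^k DX_t|_{\widetilde F_x})^{-1}\|=|\det(DX_t|_{F_x})|^{-1}$.

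The first biconditional of the lemma is then tautological: the volume-expansion estimate $|\det(DX_t|_{F_x})|>Ce^{\lambda t}$ valid for all $x\in \Gamma$, $t>0$ is literally the same inequality as $\|(\wedge^k DX_t|_{\widetilde F_x})^{-1}\|<C^{-1}e^{-\lambda t}$, i.e.\ the uniform expansion of $\widetilde F$ under $\wedge^k DX_t$. For the ``in particular'' assertion I would just combine this with Theorem~\ref{bivectparthyp2}, which already provides the equivalence between domination of $E\oplus F$ for $DX_t$ and domination of $\widetilde E\oplus \widetilde F$ for $\wedge^k DX_t$. Selecting the option in the definition of partial hyperbolicity that permits uniform expansion along the central subbundle, singular hyperbolicity of $E\oplus F$ (dominated splitting with $F$ volume expanding) then corresponds exactly to partial hyperbolicity of $\widetilde E\oplus\widetilde F$ with $\widetilde F$ uniformly expanded.

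There is no genuine obstacle. The only point deserving explicit verification is the volume–norm identification displayed above, which crucially uses that $\widetilde F$ has rank one; once this is in place, the rest is a formal combination of Theorem~\ref{bivectparthyp2} with the relevant definitions.
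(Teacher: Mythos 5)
Your argument is correct. The paper cites this lemma from \cite{salvinc2017} without reproducing a proof, so there is nothing internal to compare against; but the dimension count you lead with is exactly the right idea: with $k=\dim F$ the subbundle $\widetilde F=\wedge^k F$ is a line bundle, $u_1\wedge\cdots\wedge u_k$ is a unit generator whenever $\{u_i\}$ is orthonormal, and Remark~\ref{rmk:rvolumeflow} identifies $\|\wedge^kDX_t|_{\widetilde F_x}\|$ with $|\det(DX_t|_{F_x})|$, after which volume expansion of $F$ and uniform expansion of $\widetilde F$ are literally the same estimate. One small caveat on the ``in particular'' clause, directed more at the lemma's phrasing than at your argument: under Definition~\ref{def:singhypset}, singular hyperbolicity of $E\oplus F$ also asks for uniform contraction along $E$, which does not follow from $\widetilde E\oplus\widetilde F$ being partially hyperbolic with $\widetilde F$ expanded; your parenthetical reading of singular hyperbolicity as ``dominated with $F$ volume expanding'' makes the biconditional exact, and is evidently what is intended here (Corollary~\ref{bivectparthyp} then restores the contraction hypothesis on $E$ explicitly).
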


\begin{corollary}\label{bivectparthyp} Assume that
  $T_{\Gamma}M = E \oplus F$ is a $DX_{t}$ invariant
  splitting. Suppose that $E$ is
  uniformly contracted by $DX_{t}$. Then  $E \oplus F$ is a
  singular-hyperbolic splitting for $DX_{t}$ if, and only
  if, $ \widetilde E \oplus \widetilde F$ is a partially
  hyperbolic splitting for $\wedge^{k} DX_t$ such that
  $\widetilde F$ is uniformly expanded by $\wedge^{k} DX_t$.
\end{corollary}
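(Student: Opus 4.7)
The plan is to assemble the corollary directly from the two preceding results in Section~\ref{sec:statem-prelim-result}, namely Theorem~\ref{bivectparthyp2} (dominated splittings pass to and from exterior powers) and Lemma~\ref{bivectparthyp1} (volume expansion of $F$ is equivalent to uniform expansion of $\widetilde F$ by $\wedge^k DX_t$). The only thing that requires attention is making the definitions match cleanly, because partial hyperbolicity admits two formulations (uniform contraction of the ``stable'' subbundle or uniform expansion of the ``central'' subbundle) and we want to use the second one on the exterior power.

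For the forward implication I would assume that $E\oplus F$ is singular hyperbolic. By definition this gives three pieces of data: $E\oplus F$ is dominated, $E$ is uniformly contracted, and $F$ is volume expanding. Theorem~\ref{bivectparthyp2} converts the first item into a dominated splitting $\widetilde E\oplus \widetilde F$ for $\wedge^k DX_t$. Lemma~\ref{bivectparthyp1} converts the volume expansion of $F$ into uniform expansion of $\widetilde F$ by $\wedge^k DX_t$. Domination together with uniform expansion of $\widetilde F$ is precisely partial hyperbolicity in the ``central expansion'' formulation recalled in Section~\ref{sec:stat}, so $\widetilde E\oplus \widetilde F$ is partially hyperbolic with $\widetilde F$ uniformly expanded, as required.

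For the converse I would assume $\widetilde E\oplus \widetilde F$ is partially hyperbolic for $\wedge^k DX_t$ with $\widetilde F$ uniformly expanded. Partial hyperbolicity of the exterior splitting provides, in particular, that $\widetilde E\oplus \widetilde F$ is dominated, and then Theorem~\ref{bivectparthyp2} gives back that $E\oplus F$ is dominated for $DX_t$. The uniform expansion of $\widetilde F$ by $\wedge^k DX_t$, via Lemma~\ref{bivectparthyp1}, yields volume expansion of $F$ by $DX_t$. Combined with the standing hypothesis that $E$ is uniformly contracted, the three conditions in Definition~\ref{def:singhypset} are met, so $E\oplus F$ is singular hyperbolic.

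There is no real obstacle here; the only mild subtlety is that uniform contraction of $E$ cannot in general be recovered from the exterior power picture (a vector in $\widetilde E$ may mix one direction in $E$ with several expanding directions of $F$, so $\widetilde E$ need not be contracted), which is exactly why the corollary carries the extra assumption that $E$ is uniformly contracted compared to the ``in particular'' statement inside Lemma~\ref{bivectparthyp1}. The whole argument is therefore a short chain of equivalences, with Theorem~\ref{bivectparthyp2} and Lemma~\ref{bivectparthyp1} doing all the analytic work.
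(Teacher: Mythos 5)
Your proof is correct and is precisely the argument the paper intends: the corollary is stated without a written proof because it is meant to be the immediate combination of Theorem~\ref{bivectparthyp2} (domination passes both ways to the exterior power) with Lemma~\ref{bivectparthyp1} (volume expansion of $F$ is equivalent to uniform expansion of $\widetilde F$), supplemented by the standing hypothesis that $E$ is uniformly contracted. Your closing remark explaining why uniform contraction of $E$ cannot be read off from $\widetilde E$ (since $k$-vectors in $\widetilde E$ mix one $E$-direction with $k-1$ directions from $F$) is exactly the right reason the hypothesis must be carried separately, so nothing is missing.
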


Let $M$ be an $m$-dimensional Riemannian manifold  with
$\langle \cdot,\cdot \rangle$ inner product in
$T_{\Gamma}M$, and $\langle \cdot,\cdot \rangle_{*}$ the
inner product in $\wedge^{k} T_{\Gamma}M$ induced by
$\langle \cdot,\cdot \rangle$ where
$\wedge^{k} T_{\Gamma}M = \bigcup_{x \in \Gamma} \wedge^{k}
T_{x}M$. So for $x \in \Gamma$, we have that
$\langle \cdot,\cdot \rangle$ is defined on $T_{x}M$, and
$\langle \cdot,\cdot \rangle_{*}$ is defined on
$\wedge^{k} T_{x}M$.

\begin{lemma} \label{lemma189} Let $N$ be a $n$-dimensional
  vector bundle. Then, for each inner product
  $[\cdot,\cdot]_{*}$ in $\wedge^{n-1} N$ there exists an
  inner product $[\cdot,\cdot]$ on $N$ such that
  $[\cdot,\cdot]_{*}$ is induced by $[\cdot,\cdot]$.
\end{lemma}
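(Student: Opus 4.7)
My plan rests on the rank coincidence $\mathrm{rk}\,\wedge^{n-1}N=\binom{n}{n-1}=n=\mathrm{rk}\,N$, which suggests that the map $\Phi$ sending an inner product on $N$ to the induced inner product on $\wedge^{n-1}N$ is a bijection on pointwise inner products. I will invert $\Phi$ by an explicit algebraic formula, applied fibrewise.

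First I fix a point $x$ and a basis $\{e_1,\dots,e_n\}$ of the fibre $N_x$, and form the associated basis
\begin{align*}
w_j:=(-1)^{j-1}\,e_1\wedge\cdots\wedge\widehat{e_j}\wedge\cdots\wedge e_n,\qquad j=1,\dots,n,
\end{align*}
of $\wedge^{n-1}N_x$ (the sign is chosen so that $w_j\wedge e_j=e_1\wedge\cdots\wedge e_n$). If $G=(g_{kl})$ is the Gram matrix of an inner product $[\cdot,\cdot]$ on $N_x$ in the basis $\{e_k\}$, then item (3) of Remark~\ref{remark3.1} says that the $(i,j)$-entry of the Gram matrix of the induced inner product in $\{w_j\}$ equals $(-1)^{i+j}\det M_{ij}(G)=C_{ij}(G)$, where $M_{ij}(G)$ is the minor of $G$ obtained by removing row $i$ and column $j$ and $C_{ij}(G)$ is the associated cofactor. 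Since $G$ is symmetric, the matrix of cofactors is the adjugate $\adj(G)=\det(G)\,G^{-1}$. Thus, in the bases $\{e_k\}$ and $\{w_j\}$, the map $\Phi$ is the single algebraic assignment $G\mapsto\det(G)\,G^{-1}$.

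Next I invert this relation. Given the positive-definite symmetric Gram matrix $H$ of $[\cdot,\cdot]_*$ in the basis $\{w_j\}$, set $c:=\det(H)^{1/(n-1)}>0$ and define $G:=cH^{-1}$. A direct computation gives $\det(G)=c^n/\det(H)=c$, so $\det(G)\,G^{-1}=c\cdot c^{-1}H=H$; moreover $G$ is symmetric positive definite because $H$ is, so the bilinear form with matrix $G$ in $\{e_k\}$ defines an inner product $[\cdot,\cdot]$ on $N_x$ whose induced inner product on $\wedge^{n-1}N_x$ is exactly $[\cdot,\cdot]_*$.

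To pass from a single fibre to the whole bundle, I would note that $H\mapsto\det(H)^{1/(n-1)}H^{-1}$ is smooth on the open cone of positive-definite symmetric matrices, and that the inversion is natural under change of frame (since $\Phi$ itself is natural and the fibrewise inverse is uniquely determined by the formula above). Hence the fibrewise construction assembles into a smooth section of the bundle of positive-definite symmetric bilinear forms on $N$, providing the required inner product $[\cdot,\cdot]$. The main obstacle, more conceptual than computational, is recognising that the apparently nonlinear map $\Phi$ collapses to the adjugate expression $G\mapsto\det(G)G^{-1}$; once this identification is made, the explicit inversion and the bundle glueing are immediate.
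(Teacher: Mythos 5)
Your proof is correct, and it takes a genuinely different and, I think, cleaner route than the paper's. The paper first produces a bundle automorphism $J$ of $\wedge^{n-1}N$ with $[u,v]_*=\langle Ju,Jv\rangle_*$ for an auxiliary induced metric $\langle\cdot,\cdot\rangle_*$, and then claims that the map $\varphi:GL(N)\to GL(\wedge^{n-1}N)$, $A\mapsto\wedge^{n-1}A$, is an ``injective linear homomorphism'' and hence an isomorphism by dimension count, so that $J=\wedge^{n-1}A$ for some $A$. That step is stated sloppily and is in fact not literally true: $\varphi$ is a Lie group (not linear) homomorphism; it is not injective when $n$ is odd (since $\varphi(-I)=I$); and it is not surjective when $n-1$ is even, because $\det(\wedge^{n-1}A)=(\det A)^{n-1}>0$, so the image misses orientation-reversing $J$. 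The paper's conclusion is still salvageable (one may replace $J$ by $J$ composed with a reflection to force $\det J>0$, and then use that the image of the connected component is an open, hence full, subgroup of $GL^+$), but this requires an argument not given there.

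Your approach sidesteps all of this by working directly with Gram matrices and observing that in the dual basis $\{w_j\}$ the induced inner product has matrix exactly the cofactor matrix of $G$, which for symmetric $G$ equals $\adj(G)=\det(G)\,G^{-1}$. The explicit inverse $H\mapsto\det(H)^{1/(n-1)}H^{-1}$ then makes the fibrewise bijection, its positive-definiteness, its smoothness, and its frame-independence all transparent at once, with no appeal to surjectivity of $\varphi$. What the student's route buys is an elementary, fully explicit inversion formula with no hidden Lie-theoretic input; what the paper's route (once repaired) would buy is a slightly more conceptual phrasing in terms of the functoriality of $\wedge^{n-1}$, but at the cost of the subtleties above. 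Your version is the more robust one.
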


\begin{proof}
  Let $N$ be a $n$-dimensional vector bundle with an inner
  product $\langle \cdot,\cdot \rangle$ on $N$, and
  $\langle \cdot,\cdot \rangle_{*}$ the inner product in
  $\wedge^{n-1} N$ induced by
  $\langle \cdot,\cdot \rangle$ (see Remark \ref{remark3.1}).

  Take $[\cdot,\cdot]_{*}$ an arbitrary inner product in
  $\wedge^{n-1} N$. Using that $[\cdot,\cdot]_{*}$ and
  $\langle \cdot, \cdot \rangle_{*}$ are inner products in
  $\wedge^{n-1} N$ there exists a vector bundle
  isomorphism $J: \wedge^{n-1} N \to \wedge^{n-1} N$
  such that $[u,v]_{*} = \langle J(u), J(v) \rangle_{*}$ for $u,v \in \wedge^{n-1} N$.

  Define $\varphi : GL(N) \to GL(\wedge^{(n-1)} N)$ by
  $A \mapsto \wedge^{n-1} A$ and note that $\varphi$ is an
  injective linear homomorphism. Hence, due to the
  dimensions of the spaces, $\varphi$ is a linear
  isomorphism.  Thus, there exists $A \in GL(N)$ such that
  $\wedge^{n-1} A = J$. This implies that $[u,v]_{*} = \langle \wedge^{n-1} A(u),  \wedge^{n-1} A(v) \rangle_{*}$ for $u,v \in \wedge^{n-1} N$.

We define $[[x,y]] :=\langle A(x), A(y) \rangle$ for
$x, y \in N_z$ and $z \in \Gamma$. Denote also by $[[\cdot, \cdot]]$ the inner product  over  $\wedge^{n-1} N$ induced by the inner product $[[\cdot, \cdot]]$ over $N$ (see Remark \ref{remark3.1}). 
Then if
$u = u_{1} \wedge \cdots \wedge u_{n-1}$ and
$v = v_{1} \wedge \cdots \wedge v_{n-1}$ we get
$[[u,v]] = \det([[u_{i},v_{j}]])_{n-1\times n-1} =
\det(\langle Au_{i}, Av_{j} \rangle )_{n-1\times n-1} $ and
so, on the one hand, we obtain
$$
[[u,v]] = \det(\langle
A(u_{i}),A(v_{j})\rangle)_{n-1\times n-1} = \langle
\wedge^{n-1} A (u), \wedge^{n-1} A (v) \rangle_{*}.
$$
On the other hand, we also have
$
[u,v]_{*}
=
\langle J(u), J(v) \rangle_{*}
=
\langle \wedge^{n-1} A (u), \wedge^{n-1} A (v) \rangle_{*}.
$
Therefore, $[\cdot,\cdot]_{*} = [[\cdot,\cdot]]$ on all
$(n-1)$-vectors and so by bilinearity equality holds for all
elements of $\wedge^{n-1}N$ and we are done.
\end{proof}


\section{Proofs of main results}\label{sec:proof}

We are now able to prove our main theorems.

\subsection{Existence of singular adapted metric}
\label{sec:existence-singul-ada}

Here we prove Theorem~\ref{mthm:singadaptmetric}.

Let $\Gamma$ be a compact invariant subset of $X$ admitting
a singular-hyperbolic splitting
$T_{\Gamma}M = E^{s} \oplus E^{c}$ with rate given by the
constant $\lambda>0$; see
Definitions~\ref{def:domination},~\ref{def:sechypset}
and~\ref{def:singhypset}.

Let $Y$ be the north-pole-south-pole vector field on the one-sphere
$\sS^{1}$, which we view as $\sS^1=\RR/\ZZ$, with one sink (the south
pole, the point $S=0 \in \sS^{1}$) and one source (the north pole) and
no other recurrent points. Thus $Y$ can be seen as a periodic smooth
function $f_0:\sS^1\to\RR$.

We take the skew-product vector field $Z$ on $M\times\mathbb{S}^1$
given by
$(x,\theta)\mapsto (X(x),\mu(x)Y(\theta))\in T_xM\times T_t\sS^1$,
where $\mu:M\to\RR$ is a smooth function to be specified. Let
$Z_t:M\times\sS^1\circlearrowleft$ be the flow generated by $Z$ and
consider the compact $Z$-invariant subset
$\wt{\Gamma}=\Gamma\times\{S\}$.


Note that $T_{\wt{\Gamma}}(M\times \sS^1)=E^{s} \oplus F \oplus E^{c}$
where $F=T_S\sS^1$ is a one-dimensional subspace, $\dim E^{s} = d_s$ and
$\dim E^{c} = d_c$, $d_s+d_c=m$. Indeed, the flow
$(x,\theta,t)\mapsto Z_t(X,\theta)=(X_t(x),Y_{t,x}(\theta))$, where
\begin{align*}
  Y_{t,x}(\theta)=\theta+\int_0^t\mu(X_s(x))f_0\big(Y_{s,x}(\theta)\big)\, ds,
  \quad
  (x,\theta)\in M\times\sS^1, t\in\RR,
\end{align*}
is uniquely defined and preserves $\{0\}\times T\sS^1$. In fact, any curve
$v(s)=\{x\}\times\theta(t)$ for a given fixed $x\in M$ and smooth
$\theta:I\to\sS^1$ is sent to
$Z_t(v(s))=\big(X_t(x),Y_{t,x}(\theta(s))\big)$, which is again a
curve in $\{X_t(x)\}\times\sS^1$; where $I$ denotes the unit interval
$[0,1]$.

Denoting by $[\cdot,\cdot]$ an inner product over
$T_\Gamma M$ and by $\cdot$ some inner product on $\sS^1$,
we define for
$v= v_{1}+f_{1},u = u_{1} +f_{2} \in E^{s} \oplus F \oplus
E^{c} $, where $v_{1},u_{1} \in E^{s} \oplus E^{c}$ and
$f_{1},f_{2} \in F$, the inner product on
$T_{\Gamma\times\{S\}}(M\times\sS^1)$ by
$ \langle v,u \rangle = [ v_{1},u_{1}] + f_{1}\cdot f_{2}.$

\begin{remark}\label{rmk:ortonorm}
  \begin{enumerate}
  \item The choice of the inner product ensures that the subbundle $F$
    becomes orthogonal to $E^s\oplus E^c$.  
  \item We also have that
    $\|DZ_t(x,S)\mid_F\|=\exp\left[-\alpha_0\int_0^t\mu(X_s(x))\,ds\right]
    =\mm(DZ_t(x,S)\mid_F)$\footnote{Here
      $\mm (A) = \inf_{\|u\|=1} \| A (u) \|$ for a morphism $A$ of
      normed vector spaces.}, $x\in M, t\in\RR$, where
    $-\alpha_0=f_0'(0)<0$, for any Finsler on $M\times\{S\}$, by
    construction of $Z_t$.
  \end{enumerate}
\end{remark}


We are going to show  that there exists the dominated splitting $T_{\wt{\Gamma}}(M\times\sS^1) = E^{s} \oplus (F
\oplus E^{c})$ over the compact invariant set $\wt{\Gamma}$ of $Z$. To prove this result we use the following auxiliary results.

\subsubsection{Subadditive functions of the orbits of a flow and exponential growth}
\label{sec:subadd-functi-orbits}

We say that a family of functions $\phi:\RR\times\Gamma\to\RR$ is
\emph{subadditive} if
\begin{align*}
  \phi(t+s,x)\le\phi(s,X_t(x))+\phi(t,x),\quad
  \text{for all}\quad t,s\in\RR, x\in\Gamma.
\end{align*}

\begin{lemma}{\cite[Lemma 4.12]{AraPac2010}}
  \label{le:subdiff}
  Let $\phi:\RR\times \Gamma\to\RR$ be a subadditive function for the flow
  of $X$ satisfying $\phi(0,x)=0$ and
  $D(x):=\limsup_{h\to0} (\phi(h,x)/h)<\infty$.  Then
  $\partial_h \phi(h,x)\mid_{h=0}=D(x)=\lim_{h\to0}\phi(h,x)/h$.
\end{lemma}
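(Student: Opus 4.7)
The plan is to reduce the claim to a statement about existence of the limit $\lim_{h\to 0}\phi(h,x)/h$: since $\phi(0,x)=0$, the derivative $\partial_h\phi(h,x)\big|_{h=0}$ coincides with this limit whenever the latter exists. By hypothesis, $\limsup_{h\to 0}\phi(h,x)/h=D(x)<\infty$, so I need only prove the matching lower bound $\liminf_{h\to 0}\phi(h,x)/h\ge D(x)$, handling the right-hand and left-hand limits in turn.

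For the right-hand limit, I would first fix a sequence $\tilde h_n\to 0^+$ realizing $\phi(\tilde h_n,x)/\tilde h_n\to D(x)$ and, for each ``probe scale'' $h>0$ with $h<\tilde h_n$, write $\tilde h_n=k_nh+r_n$ with $k_n\in\NN$ and $0\le r_n<h$. Iterating the subadditivity inequality $\phi(t+s,x)\le\phi(s,X_t(x))+\phi(t,x)$ along the orbit yields
\begin{align*}
\phi(\tilde h_n,x)\le\sum_{j=0}^{k_n-1}\phi(h,X_{jh}(x))+\phi(r_n,X_{k_nh}(x)).
\end{align*}
Dividing by $\tilde h_n$ and sending first $n\to\infty$ (so that $k_nh/\tilde h_n\to 1$ and $r_n/\tilde h_n\to 0$) and afterwards $h\to 0^+$, I would extract an inequality of the form $D(x)\le\liminf_{h\to 0^+}\phi(h,x)/h$, \emph{provided} the difference quotients $\phi(h,X_{jh}(x))/h$ at orbit points can be uniformly controlled by their counterpart at $x$ itself.

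For the left-hand limit, the plan is to exploit subadditivity with $s=-t$ together with $\phi(0,x)=0$ to obtain $\phi(-t,X_t(x))\ge-\phi(t,x)$; dividing by $-t<0$ flips the inequality and gives
\begin{align*}
\frac{\phi(-t,X_t(x))}{-t}\le\frac{\phi(t,x)}{t}\qquad\text{for all small }t>0.
\end{align*}
Combined with continuity of the flow ($X_t(x)\to x$ as $t\to 0$) and with the already-established right-hand identity, this forces the left-hand difference quotient at $x$ to the same value $D(x)$.

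The main obstacle I anticipate is the propagation step in the right-hand limit: the iterated subadditivity inequality naturally produces $\phi(h,\cdot)/h$ at orbit points $X_{jh}(x)$, not at $x$, so deriving a contradiction with a putative $\liminf<D(x)$ requires uniform control of these quotients along shrinking orbit segments through $x$. I would handle this by invoking compactness of $\Gamma$ together with an upper semicontinuity property of $D(\cdot)$ implicit in the cocycle/flow structure of $\phi$, so that the difference quotients $\phi(h,X_{jh}(x))/h$ can be bounded above by $D(x)+\varepsilon$ uniformly in $j$ once $h$ is small enough.
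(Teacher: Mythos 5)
The paper cites this lemma from~\cite[Lemma 4.12]{AraPac2010} and does not reprove it, so there is no in-paper argument to compare against; I will assess your proposal on its own.

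Your reduction to showing $\liminf_{h\to 0}\phi(h,x)/h\ge D(x)$ is correct, but the iterated-subadditivity step does not close, and the acknowledged gap is fatal rather than technical. First, the limit order is inconsistent: with $h<\tilde h_n$, one cannot fix $h>0$ and send $n\to\infty$, since $\tilde h_n\to 0$ eventually forces $\tilde h_n<h$, whence $k_n=0$ and $r_n/\tilde h_n=1\not\to 0$. Second, and more fundamentally, subadditivity gives an \emph{upper} bound on $\phi(\tilde h_n,x)$ by a sum of quotients $\phi(h,X_{jh}(x))/h$ taken at \emph{orbit points}, so the most one could hope to extract is that $D(x)$ is dominated by averages of $\phi(h,y)/h$ over orbit points $y$ near $x$; that is not a bound against $\phi(h,x)/h$. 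The fix you propose --- upper semicontinuity of $D$ ``implicit in the cocycle/flow structure'' --- is unjustified (a $\limsup$ is typically not upper semicontinuous, and nothing in the hypotheses forces it here) and would anyway only control $D(y)$, not the finite-$h$ quotients $\phi(h,y)/h$ that actually occur in the sum. The left-limit step has the same base-point mismatch: the inequality $\phi(-t,X_t(x))/(-t)\le\phi(t,x)/t$ is one-sided and concerns $X_t(x)$, not $x$; ``continuity of the flow'' alone neither transports the quotient back to $x$ nor produces the lower bound on the left-hand $\liminf$ that you need.

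There is also a more basic obstruction worth noting: taken literally, the statement already fails for some subadditive families satisfying the hypotheses. Take $\Gamma=\{\sigma\}$ a hyperbolic saddle with $DX(\sigma)=\diag(1,-1)$ and $\phi(t,\sigma)=\log\|DX_t(\sigma)\|=|t|$; this is subadditive (triangle inequality), vanishes at $0$, and has $D(\sigma)=1<\infty$, yet $h\mapsto |h|/h$ has no two-sided limit at $0$. The proof in~\cite{AraPac2010} must therefore exploit regularity of $\phi$ (joint continuity and effectively smoothness of $t\mapsto\phi(t,x)$) that is implicit in the smooth-flow applications where it is invoked. Any argument --- yours included --- that uses only the subadditivity inequality and finiteness of $D(x)$ cannot succeed; the extra regularity of $\phi$ has to enter the proof.
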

We say that $D(x)$ is the infinitesimal generator of $\phi$ and we can
deduce the following useful relation; see e.g.~\cite[Section
4.3.1.1]{AraPac2010}:
\begin{align*}
  \phi(t,x)=
  \int_0^t D\big(X_s(x)\big) \, ds, \quad x\in\Gamma, t\in\RR.
\end{align*}
Hence, for each fixed $x\in\Gamma$, the function of $t$ above is in
fact \emph{additive}. The following will be very useful.

\begin{lemma}
  \label{le:subaddinduction}
  If $\phi,\psi:[0,+\infty)\times\Gamma\to\RR$ be subadditive
  functions on an $X$-invariant compact subset $\Gamma$ such that
  there exists $a>0$ satisfying $\phi(t,x)\le\psi(t,x)$ for all
  $0\le t\le a$ and $x\in\Gamma$. Then $\phi\le\psi$.
\end{lemma}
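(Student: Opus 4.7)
The plan is to reduce the global comparison $\phi\le\psi$ to a pointwise inequality between infinitesimal generators, using the integral representation of subadditive orbital functions recorded just after Lemma~\ref{le:subdiff}. First I would extract continuous generators $D_\phi(x)=\lim_{h\to0^+}\phi(h,x)/h$ and $D_\psi(x)$ defined analogously, obtaining the representations
\begin{align*}
  \phi(t,x) = \int_0^t D_\phi(X_s(x))\,ds
  \qand
  \psi(t,x) = \int_0^t D_\psi(X_s(x))\,ds
\end{align*}
for every $t\ge0$ and $x\in\Gamma$.

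From the hypothesis $\phi(t,x)\le\psi(t,x)$ on $[0,a]\times\Gamma$ and the normalization $\phi(0,x)=\psi(0,x)=0$, dividing by $t>0$ and letting $t\to0^+$ yields the pointwise generator inequality $D_\phi(x)\le D_\psi(x)$ on $\Gamma$. By $X$-invariance of $\Gamma$ this propagates along orbits, so that $D_\phi(X_s(x))\le D_\psi(X_s(x))$ holds for every $s\ge0$ and every $x\in\Gamma$. Integrating this scalar inequality from $0$ to an arbitrary $t>0$ and substituting into the two displays above delivers $\phi(t,x)\le\psi(t,x)$ globally, which is the desired conclusion.

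The delicate point is the legitimacy of the integral representation itself: Lemma~\ref{le:subdiff} only asserts the existence of the right-derivative of $\phi(\cdot,x)$ at $t=0$, and for a generic subadditive $\phi$ one in fact only obtains the one-sided bound $\phi(t,x)\le\int_0^t D_\phi(X_s(x))\,ds$. The equality version must be read in conjunction with the setup preceding this lemma in the paper, where the relevant functions are log-norms of smooth cocycles over the compact invariant set $\Gamma$; there the normalization $\phi(0,\cdot)=0$, continuity and uniform boundedness of the generator, together with a Riemann-sum/density argument, upgrade the pointwise right-derivative to a genuine additive identity along orbits. Granting this, the three steps above combine to prove the lemma.
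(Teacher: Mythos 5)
Your proof is correct and rests on the same key ingredient as the paper's, namely the additive integral representation $\phi(t,x)=\int_0^t D_\phi(X_s x)\,ds$ obtained from Lemma~\ref{le:subdiff}, but the derivation is different: you differentiate the hypothesis at $t=0^+$ to obtain the pointwise inequality $D_\phi\le D_\psi$ on $\Gamma$, propagate it along orbits by invariance, and integrate; whereas the paper keeps the hypothesis at scale $a$, chops the integral over $[0,s]$ into length-$a$ blocks, and applies $\phi(a,\cdot)\le\psi(a,\cdot)$ at the translated base points $X_{ia}x\in\Gamma$. Your route is arguably cleaner and slightly more economical, since it uses the hypothesis only as $t\to0^+$ (and in principle needs only the one-sided subadditive bound $\phi(t,x)\le\int_0^t D_\phi(X_ux)\,du$ for $\phi$, with the full equality required only for $\psi$), while the paper's interval-splitting argument needs equality in the integral representation for both functions. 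Your closing caveat about the legitimacy of upgrading the one-sided bound to an identity is well placed; the paper indeed relies on the same representation, citing the continuity and boundedness of the generator for the log-norm cocycles to which the lemma is applied.
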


\begin{proof}
  For any given $s\ge0$ we can write $s=[s]+t$ with
  $[s]=\sup\{n\in\ZZ^+:na\le s\}$ and $0\le t\le a$. Then let $m=[s]\in\ZZ$
  and by assumption
  \begin{align*}
    \phi(t,x)=\int_0^tD(X_ux)\,du\le\int_0^tE(X_ux)\,du=\psi(t,x),\quad x\in\Gamma;
  \end{align*}
  where $D,E$ are the infinitesimal generators of $\phi,\psi$. Hence
  for any fixed $x\in\Gamma$
  \begin{align*}
    \phi(s,x)
    &=
      \int_0^sD(X_ux)\,du
      =
      \sum_{i=0}^{(n-1)a}\int_0^aD(X_{ia+u}x)\,du +
      \int_n^tD(X_{na+u}x)\,du
    \\
    &\le
      \sum_{i=0}^{(n-1)a}\int_0^aE(X_{ia+u}x)\,du +
      \int_{na}^tE(X_{na+u}x)\,du
      =
      \int_0^sE(X_ux)\,du
      =\psi(s,x)
  \end{align*}
  as stated.
\end{proof}

We apply this to $\phi(t,v)=\log\|DZ_t\cdot v\|$ for
$v\in T^1(M\times\sS^1)$ obtaining a continuous function
$D^s:T^1_{\wt{\Gamma}} (M\times\sS^1)\cap E^s\to\RR$, where $T^1$
denotes the unit tangent bundle, such that
\begin{align*}
  \log\|DZ_t v_s\| =\int_0^tD^s(\phi_uv_s)\,du,
  \quad\text{with}\quad \phi_uv_s=\frac{DZ_u v_s}{\|DZ_u v_s\|}
  \qand v_s\in E^s_x, \|v_s\|=1
\end{align*}
see \cite[Theorem 1.11]{Ara2020}.
Analogously we obtain continous
functions $D^c, D^s_F$ on $T^1_{\wt\Gamma}(M\times\sS^1)$ satisfying
\begin{align*}
  \log\|DZ_{-t}\phi_tv_c\|
  &=
\int_0^tD^c(\phi_uv_c)\,du\quad\text{with}\quad v_c\in E^c_x,\|v_c\|=1; \qand
\\
\log\|DZ_tv\|
&=
\int_0^t D^s_F(\phi_uv)\,ds\quad\text{with}\quad v\in (E^s\oplus F)_x, \|v\|=1.
\end{align*}
Note that in Remark~\ref{rmk:ortonorm}(2) we already obtained an
expression for the infinitesimal generator along the one-dimensional
bundle $F$.

\subsubsection{Dominated splitting for the skew-product flow}
\label{sec:dominated-splitting}

We are now ready to prove the following result.

\begin{lemma}\label{le:dominations}
  There exist $C, \xi>0$ and a smooth function $\mu:M\to\RR$ such that
  for the skew-product vector field $Z$ on $M\times\sS^1$, for all
  $x \in \widetilde{\Gamma}$ and $t>0$
  \begin{enumerate}
  \item
    $\|DZ_t\mid_{E_{x}^{s}} \|\cdot \|DZ_{-t}\mid_{F \oplus
      E_{Z_tx}^{c}}\| \le C e^{-\xi t}$; and
  \item
    $\|DZ_t\mid_{E_{x}^{s}\oplus F} \|\cdot
    \|DZ_{-t}\mid_{E_{Z_t x}^{c}}\| \le C e^{-\xi t}$;
  \end{enumerate}
  where
  $\|w\|= \langle w,w \rangle^{1/2}, w\in
  T_{\widetilde{\Gamma}}(M\times\sS^1)$.
\end{lemma}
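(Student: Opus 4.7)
The plan is to reduce both domination inequalities to pointwise inequalities on infinitesimal generators via Lemma~\ref{le:subdiff}, and then to construct the smooth function $\mu$ by extending and mollifying a continuous function defined only on $\wt{\Gamma}$.

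First I would verify that $\phi^s(t,x):=\log\|DZ_t|_{E^s_x}\|$ and $\phi^c(t,x):=\log\|DZ_{-t}|_{E^c_{Z_tx}}\|$ are subadditive on $[0,\infty)\times\wt{\Gamma}$ (by the cocycle property and submultiplicativity of the operator norm along the flow), so that Lemma~\ref{le:subdiff} produces continuous infinitesimal generators $D^s,D^c\colon\wt{\Gamma}\to\RR$ with $\phi^*(t,x)=\int_0^t D^*(Z_su)\,du$ for $*=s,c$. By Remark~\ref{rmk:ortonorm}(2) the generator of $\log\|DZ_t|_{F_x}\|$ equals $-\alpha_0\mu(x)$. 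Moreover, the singular hyperbolic domination of $E^s$ over $E^c$ at some rate $\lambda>0$ gives, after taking logs, the integrated bound $\int_0^t[D^s+D^c](Z_su)\,du \le \log K - \lambda t$ for every $x\in\wt{\Gamma}$ and $t>0$.

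Next I would construct $\mu$. Fix any $\xi\in(0,\lambda/3)$. The continuous midpoint function $x\mapsto -(D^s(x)+\tfrac{3\xi}{2})/\alpha_0$, defined on the compact set $\wt{\Gamma}\cong\Gamma\subset M$, admits a continuous extension to $M$ by Tietze's extension theorem. Convolving this extension with a smooth mollifier of small enough support in local coordinates (using a finite atlas of the compact manifold $M$), I obtain a smooth $\mu\colon M\to\RR$ that uniformly approximates the midpoint function on $\wt{\Gamma}$ within error less than $\xi/(3\alpha_0)$; hence
\[
D^s(x)+\xi < -\alpha_0\mu(x) < D^s(x)+2\xi, \qquad x\in\wt{\Gamma}.
\]
This extension-and-smoothing within a window of width $\xi/\alpha_0>0$ is the main technical step; the positivity of the window width is exactly what makes it possible.

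Finally, both inequalities follow by integrating the pointwise bounds along orbits, using that $F\perp E^s$ and $F\perp E^c$ (Remark~\ref{rmk:ortonorm}(1)), so that on each orthogonal direct sum of two $DZ_t$-invariant subbundles the operator norm is the maximum of the component norms. For item~(1), one bounds $\|DZ_t|_{E^s_x}\|\cdot\max\{\|DZ_{-t}|_{F_{Z_tx}}\|,\|DZ_{-t}|_{E^c_{Z_tx}}\|\}$ by the sum of $\|DZ_t|_{E^s_x}\|\cdot\|DZ_{-t}|_{F_{Z_tx}}\|=\exp\int_0^t[D^s+\alpha_0\mu](Z_su)\,du<e^{-\xi t}$ (from the left half of the constraint) and $\|DZ_t|_{E^s_x}\|\cdot\|DZ_{-t}|_{E^c_{Z_tx}}\|\le Ke^{-\lambda t}$ (singular hyperbolic domination). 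For item~(2) the analogous decomposition of $E^s_x\oplus F_x$ yields two terms, one again controlled by $Ke^{-\lambda t}$, the other (using the right half of the constraint $-\alpha_0\mu<D^s+2\xi$ combined with $\int_0^t[D^s+D^c](Z_su)\,du\le\log K-\lambda t$) bounded by $Ke^{-(\lambda-2\xi)t}$. Absorbing constants and replacing $\xi$ by $\min\{\xi,\lambda-2\xi\}>0$, both stated inequalities hold with a common pair $C,\xi>0$, completing the proof.
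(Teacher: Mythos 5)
Your proof is correct and follows essentially the same route as the paper: reduce to infinitesimal generators via Lemma~\ref{le:subdiff}, choose $\mu$ so that $-\alpha_0\mu$ sits strictly between $D^s+\xi$ and $D^s+2\xi$, and then integrate along orbits to control the two domination products. The only (minor) differences are cosmetic: you bound $\|DZ_{\pm t}\|$ on an orthogonal sum by a maximum and then by a sum, whereas the paper uses the Pythagorean identity directly, and you spell out the Tietze-plus-mollification construction of $\mu$ that the paper leaves implicit; both are equivalent in substance and lead to the same final choice $\xi_{\mathrm{final}}=\min\{\xi,\lambda-2\xi\}$.
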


\begin{proof}[Proof of Lemma~\ref{le:dominations}] For the first item,
  take $v_{s} \in E^{s}_x$ and
  $w = f + v_{c} \in (F \oplus E^{c})_{Z_t x}$, where
  $f \in F_{Z_t x}$ and $v_{c} \in E_{Z_t x}^{c}$ and all these
  vectors are unitary. Then
  \begin{align*}
    \|DZ_t &(v_{s}) \|^{2}
    \cdot \| DZ_{-t} (f + v_{c})\|^{2}
      =
      \|DZ_t (v_{s}) \|^{2} \cdot (\| DZ_{-t} (f)\|^{2} +
      \| DZ_{-t} (v_{c})\|^{2} )\\
    &=
      \|DZ_t (v_{s}) \|^{2} \cdot \| DZ_{-t} (f)\|^{2} +
      \|DZ_t (v_{s}) \|^{2} \cdot \| DZ_{-t} (v_{c})\|^{2}\\
    &=
      \| DZ_{-t} (f)\|^{2}\exp\big[{2\int_0^tD^s\phi_u v_s\,du}\big]
      +\exp\big[{2\int_0^t (D^s\phi_u v_s+D^c\phi_u v_c)\,du}\big]
    \\
    &=e^{-2\int_0^{-t}\alpha_0\mu\circ
      Z_u(Z_tx)\,ds+2\int_0^tD^s\cdot\phi_u v_s\,du}
      +e^{2\int_0^t (D^s\cdot\phi_u v_s+D^c\cdot\phi_u v_c)\,du}
    \\
    &\le
      e^{2\int_0^tD^s\cdot\phi_u v_s\,du}
      (e^{2\int_0^{t}\alpha_0\mu\circ Z_u x\,du}+e^{2\int_0^t D^c\cdot\phi_u v_c)\,du})
    \\
    &=
      \exp\big[{2\int_0^t (D^s\cdot\phi_u v_s+D^c\cdot\phi_u v_c)\,du}\big]+
      \exp\big[{2\int_0^t (D^s\cdot\phi_u v_s+\alpha_0\mu\circ Z_u x)\,du}\big].
  \end{align*}
  Now note that from domination we get
  \begin{align*}
    \exp\left[{2\int_0^t (D^s\cdot\phi_u v_s+D^c\cdot\phi_u v_c)\,du}\right]
    &=
    \|DZ_t\cdot v_s\|^2\cdot\|DZ_{-t}\phi_tv_c\|^2
    \\
    &\le
    \|DZ_t\mid E^s_x\|^2\cdot\|DZ_{-t}\mid E^c_{X_tx}\|^2
    \le
    Ke^{-2\lambda t}.
  \end{align*}
  So, it is enough to choose $\mu$ so that
  $\int_0^t (D^s\phi_uv_s+\alpha_0\mu\circ Z_sx)\,ds\le-\xi t$ for
  some $\xi>0$ and all choices of unitary vectors $v_s\in E^s_x$.  If
  $\mu:\wt\Gamma\to\RR$ is a smooth function such that
  \begin{align}\label{eq:ineqmu}
    \sup_{\|v_s\|=1}D^s(v_s)+\zeta
    < -\alpha_0\mu(x)
    < \sup_{\|v_s\|=1}D^s(v_s)+2\zeta, \quad x\in\wt\Gamma
  \end{align}
  and $\zeta>0$ is sufficiently small, then
  $ \int_0^t (D^s\phi_uv_s+\alpha_0\mu(Z_ux))\,du \le -\zeta t$.  This proves
  item (1) with $C=K$, $\xi=\zeta$ and $\mu$ chosen as above.

  For the second item, note that for all $t>0$, $v_s\in E^s_x$ with
  $\|v_s\|=1$ and $x\in\wt\Gamma$
  \begin{align*}
    \log\|DZ_t\mid_F\|
    =
    \int_0^t-\alpha_0\mu(Z_ux)\,du
    >
    \int_0^t(D^s\phi_uv_s+\zeta)\,du
    =\zeta t + \log\|DZ_t v_s\|
  \end{align*}
  which implies
  \begin{align}\label{eq:F_Es}
    \log\|DZ_t\mid_F\|
    \ge
    \zeta t + \log\|DZ_t v_s\|
    \qand
    \|DZ_t\mid_{(E^s\oplus F)_x}\|=\|DZ_t\mid_F\|.
  \end{align}
  Hence we obtain $D^s_F=-\alpha_0\mu$. Thus 
\begin{align*}
  \| DZ_t |_{(E^{s}\oplus F)_x} \|\cdot\|( DZ_t |_{E^{c}_{x}})^{-1}\|
  &=
    \|DZ_t\mid_F\|\cdot\| DZ_{-t}\mid_{E^{c}_{Z_tx}}\|
  \\
  &\le
    \exp\left[\int_0^t(-\alpha_0\mu(Z_ux)+\sup_{\|v_c\|=1}D^c\phi_u v_c) \, du\right]
  \\
  &\le
    \exp
    \left[
    2\zeta
    t+\int_0^t(\sup_{\|v_s\|=1}D^s\phi_uv_s+\sup_{\|v_c\|=1}D^c\phi_uv_c)\,
    du
    \right]
  \\
  &\le
    e^{2\zeta t}  \| DZ_t \mid_{E^s_x} \|\cdot\|
    DZ_{-t}\mid_{E^c_{Z_tx}}\|
  \le
    e^{2\zeta t} K e^{-\lambda t}.
\end{align*}
We just have to choose $0<2\zeta<\lambda$ to conclude the statement of
item (2) with $\xi=\lambda-2\zeta$ and $C=K$.

This concludes the the proof of the Lemma~\ref{le:dominations} with
$C=K$ and $\xi=\min\{\zeta,\lambda-2\zeta\}$.
\end{proof}

This shows that there exists the dominated splitting
$ T_{\wt{\Gamma}}(M\times\sS^1) = E^{s} \oplus (F
\oplus E^{c})$ over the compact invariant set $\wt{\Gamma}$
of $Z$. In particular, this also shows that
$ F \oplus E^{c}$ is a partially hyperbolic splitting with
respect to $DZ_t\mid (F \oplus E^{c})$, such that the volume
along $E^c$ is uniformly expanded. Thus, $F\oplus E^c$
\emph{is a codimension-one singular-hyperbolic splitting}
of the subbundle $F\oplus E^c$ of
$T_{\wt{\Gamma}}(M\times\sS^1)$.

\begin{remark}\label{rmk:wedgesplit}
  We can write
  $\wedge^{d_c}(E^s\oplus F\oplus E^c)=\widetilde{E}\oplus
  [F\wedge(\wedge^{d_c-1}E^c)]\oplus \wedge^{d_c} E^c$,
  where $\widetilde E$ is formed by non-null exterior
  vectors involving always some vector from $E^s$.
\end{remark}


We denote $\widetilde F=F\wedge(\wedge^{d_c-1}E^c)$ and
$\widetilde{E^c}=\wedge^{d_c} E^c$ in what follows. Then
from Theorem~\ref{bivectparthyp2} the splitting
$T_{\wt{\Gamma}}(M\times\sS^1) = E^{s} \oplus (F \oplus
E^{c})$ is dominated for $DZ_t$ if, and only if,
$\wt{E}\oplus\wt{F}\oplus\wt{E^c}$ is a dominated splitting
for $\wedge^{d_c} DZ_t$.  Since we additionally have
$|\det (DZ_{-t} \mid_{E_{x}^{c}}) | \le Ce^{-\xi t}$ for
some $C,\xi>0$ by assumption of singular-hyperbolicity, then
$\wt{F}\oplus\wt{E^c}$ \emph{is a partially hyperbolic
  splitting of the invariant subbundle
  $\wedge^{d_c}(F\oplus E^c)$ such that $\widetilde {E^{c}}$
  is uniformly expanded by}
$\wedge^{d_c} DZ_t|_{ \widetilde{F} \oplus \widetilde
  {E^{c}}}$.

Hence, from \cite[Theorem 1]{Goum07}, there exists an
adapted inner product $[\cdot,\cdot]$ for
$\wedge^{d_c} DZ_t|_{ \widetilde{F} \oplus \wt{E^{c}}}$ over
$\wt{\Gamma}$, that is, there exists $\sigma>0$
satisfying
\begin{align*}
  [\wedge^{d_c}DZ_t\mid_{\widetilde{F}_{x}}] \cdot
  [\wedge^{d_c}DZ_{-t}\mid_{\widetilde {E^{c}}_{Z_t(x)}}] \le
  e^{-\sigma t}
  \quad\text{and}\quad
  [\wedge^{d_c} DZ_{-t} \mid_{ \widetilde {E^{c}}_x}] \leq
  e^{-\sigma t}, \quad \forall t>0, \forall x\in\widetilde{\Gamma}.
\end{align*}
Lemma~\ref{lemma189} now ensures that there exists an inner
product $[[ \cdot , \cdot ]]$ on the subbundle
$ F \oplus E^{c}$ such that $[\cdot, \cdot]$ is induced by
$[[ \cdot , \cdot ]]$. This last inner product satisfies the
conditions of the following result, whose proof we postpone
to the end of this section.

  \begin{lemma}\label{le:final}
    Suppose that there exists an inner product
    $\langle \langle \cdot , \cdot \rangle \rangle$ on an
    invariant subbundle $G$ of dimension $m$ of
    $T_{\Gamma}M$, such that it admits a continuous
    splitting $G=E\oplus F$ and whose induced inner product
    $\langle \langle \cdot , \cdot \rangle \rangle_{*}$ on
    $\wedge^{(m-1)}G$ satisfies for each $x\in\Gamma$ and
    $t>0$
  \begin{align*}
    \|\wedge^{m-1}DX_t\mid_{\tilde{E}_x}\|_{*} \cdot
    \|\wedge^{m-1}DX_{-t}\mid_{\tilde{F}_{X_t(x)}}\|_{*} \le
    e^{-\lambda t}\quad\text{and}\quad
    \|\wedge^{m-1} DX_t \mid_{ \widetilde{F}_x}\|_{*} \geq e^{\lambda
    t};
  \end{align*}
  where $\|\cdot\|$ is the norm induced by
  $\langle \langle \cdot , \cdot \rangle\rangle$.
    Then there exists an
    inner product $\langle \cdot , \cdot \rangle$ in
    $T_{\Gamma}M$ such that for all $t>0$
\begin{enumerate}
\item
  $|DX_t\mid_{E_x}|\cdot|DX_{-t}\mid_{F_{X_t(x)}}|\le
  e^{-\lambda t}$;
\item
  $|\wedge^{m-1}DX_t\mid_{\tilde{E}_x}|_{*}\cdot
  |\wedge^{m-1}DX_{-t}\mid_{\tilde{F}_{X_t(x)}}|_{*}\le e^{-\lambda t}$; and
\item
  $|\wedge^{m-1} DX_t\mid_{ \widetilde F_x}|_{*} \geq e^{\lambda t}$.
\end{enumerate}
where $|\cdot|$ is the norm induced by
$\langle \cdot , \cdot \rangle$.
\end{lemma}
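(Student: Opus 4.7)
The hypothesis $\|\wedge^{m-1}DX_t|_{\widetilde F_x}\|_{*}\ge e^{\lambda t}$ is nonvacuous only if $\widetilde F=\wedge^{m-1}F\neq 0$, which forces $\dim F\ge m-1$; together with $\dim E\ge 1$ and $\dim G=m$ this yields $\dim F=m-1$ and $\dim E=1$. This matches the configuration of the application in the proof of Theorem~\ref{mthm:singadaptmetric} (where $G=F\oplus E^c$ with $\dim F=1$ and $\dim E^c=d_c=m-1$). My plan is to set $\langle\cdot,\cdot\rangle := \langle\langle\cdot,\cdot\rangle\rangle$ on $G$ and extend it arbitrarily to $T_\Gamma M$: conclusions (2) and (3) then hold tautologically, and conclusion (1) is to be extracted from the exterior bounds by a short multilinear algebra computation. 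The key preparatory observation is that, in this dimension regime, $\widetilde E\perp\widetilde F$ in $\langle\langle\cdot,\cdot\rangle\rangle_{*}$ if and only if $E\perp F$ in $\langle\langle\cdot,\cdot\rangle\rangle$ (a direct Gram-determinant computation, noting that every element of $\widetilde E$ is of the form $e\wedge\omega$ with $\omega\in\wedge^{m-2}F$); in the application $\langle\langle\cdot,\cdot\rangle\rangle_{*}$ is the adapted metric produced by Gourmelon's theorem applied to the dominated splitting $\widetilde E\oplus\widetilde F$, so this orthogonality is at my disposal.

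With $E\perp F$ secured, fix $x\in\Gamma$, $t>0$, a unit $e\in E_x$, and an orthonormal basis $\{f_1,\dots,f_{m-1}\}$ of $F_x$. The generator $\omega_0:=f_1\wedge\cdots\wedge f_{m-1}$ of $\widetilde F_x$ has unit norm, and the volume-of-parallelepiped identity (Remark~\ref{rmk:rvolumeflow}) gives $\|\wedge^{m-1}DX_t\cdot\omega_0\|_{*}=|\det(DX_t|_{F_x})|$, so the expansion hypothesis immediately yields conclusion (3). The orthogonal product rule for exterior norms, $|u\wedge v|_{*}=|u|\cdot|v|_{*}$ whenever $u\in\wedge^a E$ and $v\in\wedge^b F$ (which holds because $E\perp F$ makes the relevant Gram matrix block diagonal), then factorises the operator norm on $\widetilde E$ as
\[
\|\wedge^{m-1}DX_t|_{\widetilde E_x}\|_{*}=\|DX_t|_{E_x}\|\cdot\|\wedge^{m-2}DX_t|_{F_x}\|_{*}.
\]
Since $\dim F=m-1$, the exterior $\wedge^{m-2}F$ is the top-minus-one power, and the singular value decomposition (used as in Lemma~\ref{le:aux1}) gives $\|\wedge^{m-2}A\|_{*}=|\det A|\cdot\|A^{-1}\|$ for every isomorphism $A$ between fibres of $F$. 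Combined with $\|\wedge^{m-1}DX_{-t}|_{\widetilde F_{X_tx}}\|_{*}=|\det(DX_t|_{F_x})|^{-1}$, the exterior domination hypothesis telescopes precisely to $\|DX_t|_{E_x}\|\cdot\|DX_{-t}|_{F_{X_tx}}\|\le e^{-\lambda t}$, which is conclusion (1). Conclusions (2) and (3) are then automatic because the induced exterior norm on $\wedge^{m-1}G$ coincides with the original one.

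The delicate point is the orthogonality claim $E\perp F$ exploited throughout the computation. If the adapted exterior metric produced by Gourmelon's theorem does not automatically place $\widetilde E$ orthogonal to $\widetilde F$, or the lift through Lemma~\ref{lemma189} is not orthogonality preserving, then the natural fallback is to replace $\langle\langle\cdot,\cdot\rangle\rangle$ by the splitting-orthogonalised form $\langle v_E+v_F,w_E+w_F\rangle:=\langle\langle v_E,w_E\rangle\rangle+\langle\langle v_F,w_F\rangle\rangle$, which preserves the restricted inner products on $E$ and $F$ (hence the norm on $\widetilde F$ and the relevant operator norms of $DX_t|_E$ and $DX_{-t}|_F$), and to verify that the newly induced exterior bound on $\widetilde E$ retains the rate $\lambda$. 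This last verification, uniform over the compact invariant set $\Gamma$, will rely on a cross-term estimate in the spirit of the angle estimate inside the proof of Lemma~\ref{le:dominations}, and I expect it to be the main technical obstacle; the remainder of the argument is the routine multilinear-algebra computation sketched above.
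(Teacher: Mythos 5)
Your approach is genuinely different from the paper's, and the two are worth contrasting. The paper does not invoke orthogonality of $E$ and $F$ at all: it picks auxiliary vectors $u_2,\dots,u_k$ and $v_2,\dots,v_k$ whose $DX_{\pm t}$-images are orthonormal, observes that for $t\in[-1,1]$ all these vectors lie in a compact ball of $T\Gamma$, rescales the metric by a constant $\gamma$ determined by compactness of the natural map $\mathcal{I}:(w_1,\dots,w_k)\mapsto w_1\wedge\cdots\wedge w_k$ on that ball, and then extends from $t\in[0,1]$ to all $t>0$ via the subadditivity Lemma~\ref{le:subaddinduction}. You instead exploit the dimensional rigidity ($\dim E=1$, $\dim F=m-1$, correctly deduced from $\widetilde F\neq 0$), and reduce to a singular-value factorisation: granting $E\perp F$, the identities $\|\wedge^{m-1}DX_t|_{\widetilde E_x}\|_*=\|DX_t|_{E_x}\|\cdot|\det(DX_t|_{F_x})|\cdot\|(DX_t|_{F_x})^{-1}\|$ and $\|\wedge^{m-1}DX_{-t}|_{\widetilde F_{X_tx}}\|_*=|\det(DX_t|_{F_x})|^{-1}$ are correct and they telescope exactly to conclusion~(1), uniformly in $t>0$, with no rescaling or extension step at all. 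Your Gram-determinant computation showing $\widetilde E\perp\widetilde F\Leftrightarrow E\perp F$ is also right, and it does propagate through Lemma~\ref{lemma189}, so in both places the paper actually uses Lemma~\ref{le:final} the orthogonality is available from Gourmelon's theorem.

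The gap is exactly the one you flag: the lemma as stated has no orthogonality hypothesis, so "extend arbitrarily and assume $E\perp F$" does not prove the statement in the generality written, and your fallback (re-orthogonalise, then control the loss) is left as a sketch. Re-orthogonalisation preserves the restricted metrics on $E$ and on $F$ (hence the operator norms on $E$ and $F$ and the determinant rate on $\widetilde F$), but it does change the exterior norm on $\widetilde E$, and the resulting change of $\|\wedge^{m-1}DX_t|_{\widetilde E_x}\|_*$ a priori costs a bounded multiplicative constant $C>1$ --- which is precisely the kind of loss an adapted-metric statement cannot afford. Making the fallback rigorous would require showing this constant can be absorbed, which is not automatic; the paper sidesteps the issue with its rescaling--plus--subadditivity mechanism. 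The simplest repair consistent with how the lemma is actually used is to add the hypothesis $\widetilde E\perp\widetilde F$ (equivalently, by your computation, $E\perp F$) to the lemma; this is costless since Gourmelon's theorem supplies it and Lemma~\ref{lemma189} transmits it, and with that hypothesis in place your argument is a complete, clean, and $t$-uniform proof.
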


Hence, applying the above lemma to the subbubdle
$G=F\oplus E^c$ with $F=E^c$ and $E=F$, we obtain that there
exists a singular adapted inner product $[[\cdot, \cdot]]_*$
on $F \oplus E^{c}$, that is, satisfying
\begin{align}\label{eq:dolemafinal}
 [[DZ_t\mid_{F_{x}}]]_* \cdot
  [[DZ_{-t}\mid_{E^{c}_{Z_t(x)}}]]_* \le
  e^{-\sigma t}
  \qand
  [[\wedge^{d_c} DZ_{-t} \mid_{ \widetilde {E^{c}}_x}]]_*
    \leq e^{-\sigma t}
\end{align}
for some $\sigma>0$ and for all $t>0$,
$x\in\widetilde{\Gamma}$.

\subsubsection{Adapting the domination on the stable
  direction}
\label{sec:adapting-dominat-sta}

We need the following lemma to find a metric which is adapted to the
uniform contraction along the stable direction.

  \begin{lemma}
    \label{le:adaptstable}
    Let $H=E^s\oplus F\oplus E^c$ be a $DZ_t$-invariant and continuous
    splitting of a vector subbundle such that $E^s$ is uniformly
    $(K,\lambda)$-contracted; the subbundle $F\oplus E^c$ admits an
    inner product $\langle\cdot,\cdot\rangle$ inducing a metric
    $|\cdot|$ satisfying items $(1), (2)$ and $(3)$ of the conclusion
    of Lemma~\ref{le:final}; and $F$ satisfies
    Remark~\ref{rmk:ortonorm} and Lemma~\ref{le:dominations}. Then
    there exists a singular adapted metric for the subbundle
    $E^s\oplus E^c$.
  \end{lemma}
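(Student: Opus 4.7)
The plan is to build the desired inner product on the subbundle $E^s\oplus E^c$ as an orthogonal direct sum: keep on $E^c$ the inner product inherited by restriction from the hypothetical adapted metric on $F\oplus E^c$, construct a new inner product on $E^s$ by a Mather-type time averaging of the original norm, and declare $E^s\perp E^c$. Under this choice, the three conditions for a singular adapted metric in Definition~\ref{def:adapsingmetric} decouple into three separate estimates which I would verify in turn.

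The adapted volume expansion on $E^c$ is essentially free. The inner product on $E^c$ inherited from $F\oplus E^c$ induces on $\wedge^{d_c}E^c=\widetilde{E^c}$ the same inner product whose existence is part of item $(3)$ of Lemma~\ref{le:final}, and that item, via Remark~\ref{rmk:rvolumeflow}, is precisely the adapted bound $|\det(DZ_t|_{E^c_x})|\ge e^{\lambda t}$. The adapted uniform contraction on $E^s$ is produced by the classical integral: fix a rate $0<\eta<\lambda$ and a large $T>0$, and set
\[
\langle u,v\rangle_s:=\int_0^T e^{2\eta s}\,\langle DZ_s u,DZ_s v\rangle_{\mathrm{orig}}\,ds,\qquad u,v\in E^s_x.
\]
Splitting $|DZ_tu|_s^2=\int_0^T e^{2\eta s}\|DZ_{s+t}u\|_{\mathrm{orig}}^2\,ds$ as an integral over $[t,T+t]$, shifting the variable and absorbing the tail over $[T,T+t]$ with the $(K,\lambda)$-contraction of $E^s$, one obtains $|DZ_tu|_s\le e^{-\eta t}|u|_s$ for every $t>0$, provided $T$ is taken large enough.

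The main obstacle is the adapted domination estimate
\[
\bigl|DZ_t\mid_{E^s_x}\bigr|_s\cdot\bigl|DZ_{-t}\mid_{E^c_{Z_tx}}\bigr|_c\le e^{-\tau t}
\]
(where $|\cdot|_c$ denotes the $E^c$-norm inherited from $\langle\langle\cdot,\cdot\rangle\rangle$) with no multiplicative constant, because the norms on $E^s$ and on $E^c$ come from two different constructions and the naive equivalence-of-norms argument only gives this inequality up to a constant. The way around it is to weight the Mather integrand on $E^s$ by a factor that already sees the $E^c$ expansion; concretely, I would replace the integrand above by
\[
\langle DZ_s u,DZ_s v\rangle_{\mathrm{orig}}\bigl(1+\bigl|DZ_{-s}\mid_{E^c_{Z_sx}}\bigr|_c^2\bigr).
\]
With this enriched weight the same telescoping argument delivers simultaneously the adapted contraction on $E^s$ and the adapted domination with $E^c$, since the extra factor precisely cancels the growth of $|DZ_{-t}|_{E^c}|_c$. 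What guarantees that the enriched integrand is uniformly comparable to the original one, and hence that the resulting bounds carry no hidden multiplicative constant, is the combination of the relation between $\|DZ_t|_F\|$ and $\|DZ_t|_{E^s}\|$ coming from Remark~\ref{rmk:ortonorm}(2) and Lemma~\ref{le:dominations} with the adapted domination between $F$ and $E^c$ in item $(1)$ of Lemma~\ref{le:final}; together these show that $\|DZ_t|_{E^s}\|\cdot|DZ_{-t}|_{E^c}|_c$ decays exponentially without any hidden constant, which is what makes the enriched average both finite and genuinely adapted.
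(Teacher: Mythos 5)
Your overall scaffolding matches the paper's: you build the new metric as an orthogonal sum, you keep on $E^c$ the inner product inherited from the adapted metric on $F\oplus E^c$ (so the volume expansion is indeed free), and you construct the $E^s$-factor by a Mather-type time average. Your closing observation that $\|DZ_t\mid_{E^s_x}\|_{\mathrm{orig}}\cdot\big|DZ_{-t}\mid_{E^c_{Z_tx}}\big|_c$ decays exponentially \emph{with no constant}, via the chain $\|DZ_t\mid_{E^s}\|\le e^{-\zeta t}\|DZ_t\mid_F\|$ from Lemma~\ref{le:dominations}/\eqref{eq:F_Es}, Finsler-independence of the one-dimensional norm $\|DZ_t\mid_F\|$, and the adapted $F\oplus E^c$ domination, is correct and is exactly the heart of the matter.

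The gap is in passing from this estimate (stated in the \emph{original} norm on $E^s$) to the same estimate for the Mather-averaged norm. Your proposal to absorb the discrepancy by enriching the Mather integrand with the weight $1+\big|DZ_{-s}\mid_{E^c_{Z_sx}}\big|_c^2$ does not telescope. After the change of variable $r=s+t$, what appears in the integrand for $|DZ_tu|_s^2\cdot\big|DZ_{-t}\mid_{E^c_{Z_tx}}\big|_c^2$ is the product $\big|DZ_{-t}\mid_{E^c_{Z_tx}}\big|_c^2\cdot\big|DZ_{-(r-t)}\mid_{E^c_{Z_rx}}\big|_c^2$, and submultiplicativity of operator norms along $E^c$ gives $\big|DZ_{-r}\mid_{E^c_{Z_rx}}\big|_c\le \big|DZ_{-t}\mid_{E^c_{Z_tx}}\big|_c\cdot\big|DZ_{-(r-t)}\mid_{E^c_{Z_rx}}\big|_c$, which bounds the relevant product from \emph{below} by the weight $g(r)^2$ you would need, not from above. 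Since $E^c$ is only area-expanding and $\dim E^c$ can be $\ge2$, the quantity $g(t)=\big|DZ_{-t}\mid_{E^c_{Z_tx}}\big|_c$ is a genuinely submultiplicative, non-multiplicative cocycle, and there is no clean cancellation. Without a further device, you end up with a hidden multiplicative constant $c_2/c_1$ coming from the compactness bounds on the weighted integral, which is exactly what the whole lemma is trying to avoid.

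The paper avoids this by never putting $E^c$ into the weight at all: it compares the Mather-averaged $E^s$-norm directly with $\|DZ_t\mid_F\|$ (Lemma~\ref{le:expint} plus Lemma~\ref{le:subaddinduction}, requiring $\alpha_0$ to be chosen small), exploits that $F$ is \emph{one-dimensional} so that $\|DZ_t\mid_F\|$ is an exactly multiplicative, Finsler-independent cocycle and hence coincides with the adapted norm $[[DZ_t\mid_F]]_*$, and only then invokes the adapted $F\oplus E^c$ domination. The one-dimensionality of the auxiliary skew-product direction $F$ is the entire point of introducing it, and it is precisely the structural feature your enriched weight cannot reproduce with $E^c$. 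To repair your argument you would need to either replace the $E^c$-weight with a weight built from $\|DZ_s\mid_F\|$ (which is multiplicative), or directly prove the analogue of Lemma~\ref{le:expint} showing that the Mather average is pointwise dominated by $\|DZ_t\mid_F\|$.
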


  \begin{proof}
    We follow \cite{Goum07} and define a norm on $E^{s}$ by
    $|u_{s}| = \int_{0}^{+\infty} e^{\beta\xi r} \|DX_{\beta r}
    u_{s}\|\,dr$ for any $u_{s} \in E^{s}$, where $\xi$ is given by
    Lemma~\ref{le:dominations} and $\beta=K/(\lambda-2\xi)$.  It is
    easy to check that $|DX_{t}u_{s}| \le e^{-\xi t} |u_{s}|$ for all
    $t >0$ and also that this norm is Riemannian, i.e., induced by an
    inner product $(\cdot,\cdot)$.

    From the construction of $F$ and~\eqref{eq:F_Es} we obtain for
    $v\in E^s_x$
    \begin{align*}
      |DX_t v|
      &=
        \int_0^\infty e^{\beta\xi u}\|DX_{\beta u+t}v\|\,du
        \le
        \int_0^\infty e^{\beta\xi u} e^{-\xi(\beta u+t)}
        \|DZ_{\beta u+t}\mid_{F_x}\|\,du
      \\
      &=
        e^{-\xi t}\int_{t/\beta}^\infty\|DZ_{\beta r}\mid_{F_x}\|\,dr.
    \end{align*}
    We need an auxiliary result which we state here but whose proof we
    postpone.
\begin{lemma}
  \label{le:expint}
  There exists $a>0$ so that
  $e^{-\xi t}\int_{t/\beta}^\infty\|DZ_{\beta u}\mid_{F_x}\|\,du
  \le\|DZ_t\mid F_x\|$ for all $0\le t\le a$.
\end{lemma}
Hence we deduce that $|DX_t v|\le\|DZ_t\mid F_x\|$ for each
$0\le t\le a, x\in\wt{\Gamma}$ and some $a>0$. By
Lemma~\ref{le:subaddinduction} we conclude that the inequality can be
extended to all $t\ge0$. Since the norm of $DZ_t\mid_{F_x}$ does not
depend on the chosen Finsler, we can write
    \begin{align*}
      |DX_{t}|_{E^{s}_{x}}| \cdot
      [[DZ_{-t}\mid_{E^{c}_{Z_t(x)}}]]_*
      &\le
        \|DZ_t\mid_{F_x}\| \cdot [[DZ_{-t}\mid_{E^{c}_{Z_t(x)}}]]_*
      \\
      &=
        [[DZ_t\mid_{F_{x}}]]_* \cdot[[DZ_{-t}\mid_{E^{c}_{Z_t(x)}}]]_*
        \le
        e^{-\sigma t}.
    \end{align*}
    We can now define an inner product
    $\langle \cdot,\cdot \rangle$ on
    $T_{\Gamma}M = E^{s} \oplus E^{c}$ by
    $\langle u,v \rangle = ( u_{s}, v_{s}) +
    [[u_{c},v_{c}]]_*$ for all
    $u = u_{s} + u_{c}, v = v_{s}+v_{c} \in T_{\Gamma}M$,
    and let $ \langle \cdot , \cdot \rangle_{*}$ be the
    induced inner product by $\langle \cdot,\cdot \rangle$
    on $\wedge^{d_{c}}T_{\Gamma}M$.

    Then $\langle \cdot , \cdot \rangle$ is an inner product
    in $T_{\Gamma}M$ adapted to the partially hyperbolic
    splitting $E^{s}\oplus E^{c}$ for $DX_t$. Moreover, from
    the definition of the inner product and exterior power,
    it follows that
    $|\det (DX_{-t}\mid_{E_{x}^{c}})| \leq e^{-\sigma t}$
    for all $t>0$.

    We have obtained a singular adapted metric for
    $E^s\oplus E^c$ finishing the proof of
    Lemma~\ref{le:adaptstable}.
  \end{proof}

  This completes the proof of Theorem~\ref{mthm:singadaptmetric}
  except for the proof of Lemma~\ref{le:final} and
  Lemma~\ref{le:expint}, which we present next.

\begin{proof}[Proof of Lemma~\ref{le:final}]
  Let $u \in E_{x}$ and $v \in F_{X_{t}(x)}$ be such that
  $\|u\| = 1= \|v\|$. We observe that for a given fixed $t \in \mathbb{R}$
  \begin{align*}
    \|DX_tu\|\cdot\|DX_{-t} v\|
    =
    \|\wedge^{k}DX_{t}(u \wedge u_{2} \wedge \cdots \wedge
    u_{k})\|
    \cdot
    \|\wedge^kDX_{-t}(v \wedge v_2 \wedge \cdots \wedge v_k)\|
  \end{align*}
  if we choose $u_{2},\cdots,u_{k} \in E_{x}$ and
  $v_{2},\cdots,v_{k} \in F_{X_{t}(x)}$ such that:
  \begin{itemize}
  \item
    $\langle DX_{t}u, DX_{t}u_{j} \rangle = 0$
    for $2\le j\le k$ and
    $\langle DX_{t}u_{j}, DX_{t}u_{l} \rangle =
    \delta_{jl}$ for $2\le j,l \le k$;
  \item
    $\langle DX_{-t}v, DX_{-t}v_j \rangle =
    0$ for $2\le j \le k$ and
    $\langle DX_{-t}v_j, DX_{-t}v_l \rangle
    = \delta_{jl}$ for $2\le j,l \le k$.
  \end{itemize}
  Consequently we obtain
  \begin{align*}
    \|DX_tu\|\cdot\|DX_{-t} v\|
    &\leq
    \|\wedge^kDX_t\| \|\wedge^kDX_{-t}\|
      \|u\wedge u_2\wedge \cdots \wedge u_k\|
      \cdot\|v \wedge v_2\wedge\cdots\wedge v_k\|
    \\
    &\le
      e^{-\lambda t}\|u\wedge u_2\wedge\cdots\wedge u_k\|
      \cdot\|v \wedge v_2 \wedge \cdots \wedge v_k\|.
  \end{align*}
  We note that $\|u_{j}\| \leq \|DX_{-t}(x)\|$
  since $\|DX_t u_j\|=1$ and analogously
  $\|v_j\|\leq \|DX_{t}(X_tx)\|$ since
  $\|DX_{-t}v_{j}\|=1$ for $2\le j \le k$ with
  $\|u\| = \|v\|=1$.

  We now set $R =\max\{1,\kappa_1\}$, where
  $$
  \kappa_1= \sup_{t\in [-1,1]} \sup_{x \in \Gamma} \|DX_{t}(x)\|
  $$
  and define
  $B[0,R]=\{ \eta \in T\Gamma : |\eta| \leq R \}$ a compact
  subset of $T\Gamma$.

  Note that if we set $t \in [-1,1]$, then
  we get $u,u_{2},\cdots, u_{k},v,v_{2},\cdots,v_{k} \in B[0,R]$ in
  the argument above.

  Moreover $\prod_{i=1}^{k} B[0,R]$ is a compact subset of
  $\prod_{i=1}^{k} T\Gamma= \sum_{p \in \Gamma} T_{p}\Gamma
  \times\overset{k}{\cdots}\times T_{p}\Gamma$ and let
  $\mathcal{I}: \prod_{i=1}^{k} T\Gamma \to \wedge^{k}T\Gamma$ be the
  natural injection given by
  $$
  (w_{1},\cdots,w_{k}) \mapsto w_{1} \wedge \cdots \wedge w_{k}.
  $$

  We define $|\cdot| = \gamma \|\cdot\|$ (or
  $\langle\cdot,\cdot\rangle=\gamma^2[[\cdot,\cdot]]$) where $\gamma$
  is a positive number such that
  $$
  \sup\limits_{w \in \prod_{i=1}^{k} B[0,R]} \|\mathcal{I}(w)\| \le\gamma^{-1}.
  $$
  It follows that
  \begin{align*}
    |DX_tu|\cdot|DX_{-t} v|
    &=
    \gamma\|\wedge^{k}DX_{t}(u \wedge u_{2} \wedge \cdots \wedge
    u_{k})\|
    \cdot\gamma
    \|\wedge^kDX_{-t}(v \wedge v_2\wedge \cdots\wedge v_k)\|
    \\
    &\le
    e^{-\lambda t}\gamma
    \|u\wedge u_2\wedge\cdots\wedge u_k\|
      \cdot \gamma\|v \wedge v_2 \wedge \cdots \wedge v_k\|
    \leq e^{-\lambda t}
  \end{align*}
  and note that the choice of $\gamma$ does not change any of the
  previous relations involving $\|\cdot\|$. Then for any given fixed
  $t \in [-1,1]$ we have obtained an adapted metric $|\cdot|$ that
  satisfies the statement of Lemma~\ref{le:final}.  For general $t>0$
  the inequality follows by Lemma~\ref{le:subaddinduction} (with
  $a=1$), since the previous inequality holds between subadditive
  functions.
  
  We have obtained a metric $|\cdot|$ satisfying item (1) in
  the statement of Lemma~\ref{le:final}. Analogously, it satisfies
  items (2) and (3) of the statement of the Lemma, and we
  are done.
\end{proof}

\begin{proof}[Proof of Lemma~\ref{le:expint}]
  Let us set
  $f(t)=e^{-\xi t}\int_{t/\beta}^\infty\|DX_{\beta
    u}\mid_{F_x}\|\,du$ and use~\eqref{eq:ineqmu} to estimate
  \begin{align*}
    0\le
    f(t)
    &\le
      e^{-\xi t}\int_{t/\beta}^\infty
      \exp\left(\int_0^{\beta u}\left[\sup_{\|v_s\|=1}D^s(\phi_w
      v_s)+2\xi\right]
      \,dw\right)\,du
    \\
    &\le
    e^{-\xi t}\int_{t/\beta}^\infty e^{2\xi\beta u}\|DX_{\beta u}\mid
    E^s_x\|\,du
    \le
      e^{-\xi t} \int_{t/\beta}^\infty Ke^{\beta(2\xi-\lambda) u}\,du
    \\
    &\le
      e^{-\xi t}\frac{K e^{(2\xi-\lambda)t}}{\beta(\lambda-2\xi)}
      =
      \frac{K}{\beta(\lambda-2\xi)}
      \le1,
  \end{align*}
as long as $\beta\ge K/(\lambda-2\xi)>0$. Then it is easy to see that
\begin{align*}
  f'(t)
  &=
    -\xi f(t)-e^{-\xi t}\|DX_t\mid_{F_x}\|<0, \forall t>0, \quad\text{and}
  \\
  \left(\frac{f(t)}{\|DX_t\mid_{F_x}\|}\right)'
  &=
    \frac{\|DX_t\mid_{F_x}\|\cdot f'(t)+f(t)\cdot\alpha_0\mu(X_tx)}{\|DX_t\mid_{F_x}\|^2};
\end{align*}
so for $t=0$ the last expression is bounded from above by
$-1 +f(0)(\alpha_0\mu(x)-\xi)$. Since $\mu$ is bounded on $\wt\Gamma$
(because $D^s$ is continuous on the compact set $E^s\cap T^1_\Gamma M$),
we can choose $\alpha_0>0$ small enough so that the last expression is
strictly negative for every $x\in\wt{\Gamma}$.

Finally, because $f(0)\le1$, by smoothness and compactness we conclude
that there exists $a>0$ such that $f(t)\le\|DX_t\mid F_x\|$ for all
$0\le t\le a, x\in\wt\Gamma$, completing the proof.
\end{proof}

\subsection{The uniformly hyperbolic setting}
\label{sec:uniformly-hyperb-set}

Here we prove Theorem~\ref{mthm:sectadaptmetricunif}.

Let $\Gamma$ be a uniformly hyperbolic compact invariant
subset for the $C^1$ vector field $X$ and let $\|\cdot\|$ be
an adapted metric, that is, we have~\eqref{eq:Klambda-hyp}
with $K=1$ and we can assume without loss of generality that
$E^s,E^X$ and $E^u$ are orthogonal; see
e.g. \cite{Goum07}.

We define $|s\cdot X(x)|=|s|$ for all $s\in\RR$ and
$x\in\Gamma$, which gives a new norm on the one-dimensional
line bundle $E^X$ such that (recall that $\Gamma$ does not
contain singularities: $X(x)\neq\vec0, x\in\Gamma$)
\begin{align*}
  |DX_t\mid E^X_x|
  =
  \frac{|DX_t X(x)|}{|X(x)|}
  =
  \frac{|X(X_t(x))|}{|X(x)|}=1, \quad x\in\Gamma, t\in\RR.
\end{align*}
Now we redefine
$|u+s\cdot X(x)+v|_*^2=\|u\|^2+|s\cdot X(x)|^2+\|v\|^2, u\in
E^s_x, v\in E^u_x, s\in\RR$ and note that
\eqref{eq:Klambda-hyp} still holds with the same $\lambda$
and $K=1$ and also
\begin{align*}
  \frac{|DX_t\mid E^s_x|_*}{|DX_t\mid E^X_x|_*}=|DX_t\mid
  E^s_x|_*
  \qand
  \frac{|DX_t\mid E^X_x|_*}{|DX_{-t}\mid E^u_{X_tx}|^{-1}}
  =|DX_{-t}\mid E^u_{X_tx}|
\end{align*}
are both bounded by $e^{-\lambda t}$ for all $t>0$. This
shows that $|\cdot|_*$ is still adapted to the uniform
hyperbolicity; and also adapted to the partially hyperbolic
splitting $E^s\oplus(E^X\oplus E^u)$.

\subsubsection{The norm is adapted to
  singular-hyperbolicity}
\label{sec:norm-adapted-singul}

With this norm, given by the inner product
$(\cdot,\cdot)_*$, it is now easy to calculate
\begin{align*}
  |(\wedge^2DX_{-t})(X(x)\wedge v)|_*^2
  &=
    |DX_{-t}X(x)|^2_*|DX_{-t}v|_*^2-(DX_{-t}X(x),DX_{-t}v)^2_*
  \\
  &\le
  e^{-2\lambda t}|v|_*^2
  =
  e^{-2\lambda t}|v|_*^2|X(x)|_*^2
  =
  e^{-2\lambda t}|X(x)\wedge v|_*^2
\end{align*}
for all $x\in\Gamma, t>0$ and all $u\in E^u_x$, since $E^X$ and
$E^u$ are everywhere orthogonal by construction.

For any pair $u,v\in E^u_x$ and given fixed $t>0$, we can
write $u\wedge v=u\wedge\tilde v$ so that $DX_{-t}\tilde v$
is orthogonal\footnote{Set $\tilde v=v-hu$ so
that $h\cdot DX_{-t}u$ is the orthogonal projection of
$DX_{-t}v$ along $DX_{-t}u$.} to $DX_{-t}u$. Thus
\begin{align}\label{eq:secexpu}
  |(\wedge^2DX_{-t})(u\wedge v)|_*^2
  &=
    |(\wedge^2DX_{-t})(u\wedge\tilde v)|_*^2
    =
    |DX_{-t} u|^2_* |DX_{-t} \tilde v|^2_*
  \\
  &\le
    e^{-4\lambda t}|u|_*^2|\tilde v|_*^2
    =
    e^{-4\lambda t}|u\wedge \tilde v|_*^2
    =
    e^{-4\lambda t}|u\wedge  v|_*^2  .
\end{align}
Hence, the norm of each elementary bivector, obtained from
an orthonormal basis of $\wedge^2(E^X\oplus E^c)$ generated
by vectors on $E^X\cup E^c$, is uniformly expanded by
$\wedge^2DX_t$ at a rate of at least $\lambda$ with the norm
$|\cdot|_*$. From Subsection~\ref{sec:condit-section-expan},
Lemma~\ref{le:aux1} and Remark~\ref{rmk:singularbasisplit},
since we may apply the above inequalities to vectors from a
singular basis for $DX_{-t}\mid_{E^X\oplus E^u}$, this is
enough to conclude that
$\|\wedge^2DX_{-t}\mid_{E^X\oplus E^u}\|\le e^{-\lambda t}$
for each $t>0$.

Then this is a sectional-adapted norm to the splitting
$E^s\oplus(E^X\oplus E^u)$. 

The proof of Theorem~\ref{mthm:sectadaptmetricunif} is
complete.

\subsection{The open class of sectional-hyperbolic sets}
\label{sec:open-class-sectional}

Here provide a proof of
Theorem~\ref{mthm:sectadaptmetricsing}.

  Let $\Gamma$ be a sectional-hyperbolic set for a $C^1$
  vector field whose splitting
  $T_\Gamma M=E\oplus E^c\oplus E^u$ is dominated and such
  that: $E$ is uniformly contracted; $E^c$ two-dimensional
  and area expanding; $E^u$ is uniformly expanded and
  $E^c\oplus E^u$ is strongly partially hyperbolic. So there
  are $K>0$ and $\eta>\lambda>0$ such that for all $t>0$
  \begin{enumerate}
  \item $\|DX_{t}|_{E}\| \leq K e^{-\lambda t}$;
  \item
    $\|DX_{t}|_{E}\| \leq K e^{-\lambda t}
    \mm(DX_{t}|_{E^{c}})$;
  \item $\|DX_{t}|_{E^{c}}\| \leq K e^{\lambda t}$ and
    $\mm(DX_t\mid_{E^u})\ge K e^{\eta t}$;
  \item
    $\| \wedge^{2} DX_{-t} \cdot (v_{1} \wedge v_{2})\|
    \le
    \|v_{1} \wedge v_{2}\| K e^{-\lambda t}$ for any
    $v_{1},v_{2} \in E^{c}$.
  \end{enumerate}
  \begin{proof}[Proof of Theorem~\ref{mthm:sectadaptmetricsing}]
  Analogously to the proof of
  Theorem~\ref{mthm:singadaptmetric}, we introduce
  consecutively
  \begin{itemize}
  \item a skew-product flow $Z_t$ on $M\times\sS^1$ admitting an
    invariant subset $\wt\Gamma=\Gamma\times\{S\}$ with a
    singular and sectional-hyperbolic splitting
    $E^{s}\oplus F_0\oplus E^c\oplus E^u$ of
    $T_{\wt{\Gamma}}(M\times\sS^1)$ satisfying
    Remark~\ref{rmk:ortonorm}, with $F_0$ in the place of $F$ and the
    same choice of inner product as in
    Subsection~\ref{sec:existence-singul-ada}; together with
  \item a direct product flow
    $W_t=Z_t\times \bar Y_{t}:
    (M\times\sS^1)\times\sS^1\circlearrowleft$, where the vector field
    $\bar Y$ given by $f_1:\sS^1\to\RR$ is a North-South vector field
    as in the beginning of Subsection~\ref{sec:existence-singul-ada}
    and we choose $\alpha_1=f'(1/2)\in(\lambda,\eta)$, where $1/2=N$
    is the coordinate of the expanding fixed source at the north pole.
\end{itemize}
In this way, we get $DY_{t}\mid T_{N}\sS^1=e^{\alpha_1 t}$ and
$\wh{\Gamma}=\wt{\Gamma}\times\{N\}=\Gamma\times\{S\}\times\{N\}$
admits a partially hyperbolic splitting
$T_{\wh{\Gamma}}=E^{s}\oplus F_0\oplus E^c\oplus F\oplus E^u$ where,
using the inner product introduced in
Subsection~\ref{sec:existence-singul-ada} making $F_0\oplus F$
orthogonal to $E^{s}\oplus E^c\oplus E^u$, we get
\begin{enumerate}
\item $F_0\oplus E^c$ is a sectional- and singular-hyperbolic
  splitting (since $\dim E^c=2$); and
\item $E^c\oplus F$ and $F\oplus E^u$ are both strongly
  partially hyperbolic splittings.
\end{enumerate}
The first item is a consequence of the arguments in the proof of
Theorem A applied to the action of $Z_t$ over the subbundle
$F_0\oplus E^c$; see Lemma~\ref{le:dominations} and the arguments
following its proof.

The second item above is a consequence of the following estimates for
a.e. point with respect to each $W_t$-invariant probability
measure
  \begin{align*}
    \lim_{t\to+\infty}\frac1t\log\big(
    \|DW_t\mid_{E^c_x}\|\cdot\|DW_{-t}\mid_{F_{W_tx}}\|\big)
    &\le\lambda-\alpha_1<0
      \qand
    \\
    \lim_{t\to+\infty}\frac1t\log\big(
    \|DW_t\mid_{F_x}\|\cdot\|DW_{-t}\mid_{E^u_{X_tx}}\|\big)
    &\le\alpha_1-\eta<0;
  \end{align*}
  since
  $DW_t\mid_{F_x}=DY_y\mid
  T_N\sS^1=e^{\alpha_1t}=\mm(DW_t\mid_{F_x})$; coupled
  with~\cite[Theorem 1.9]{Salgado19} providing domination of the
  splittings $E^c\oplus F$ and $F\oplus E^u$. More precisely, we set
  first
  $f_t(x)=\log ( \|DW_t \mid_{E^c_x} \| \cdot \|(DW_{-t}
  \mid_{F_{W_tx} })^{-1}\| )$ and then again
  $f_t(x)=\log (\|DW_t\mid_{F_x}\|\cdot\|DW_{-t}\mid_{E^u_{X_tx}}\|)$
  in the statement of the following result.

  \begin{lemma}\cite[Corollary 4.2]{ArbSal}
    \label{le:sectional-arbieto}
    Let $\{t\mapsto f_t:S\to \mathbb{R}\}_{t\in \mathbb{R}}$
    be a continuous family of continuous functions on the
    $X_t$-invariant compact subset $S$, which is subadditive
    and suppose that there exists a constant $\lambda>0$ so that
    $\int \big(\lim_{t\to+\infty}\frac{1}{t}f_t(x)\big)\,
    d\eta \le-\lambda$ for every $X_t$-invariant probability measure
    $\eta$.  Then there exists a constant $K>0$ such
    that $\exp f_t(x) \leq Ke^{-\lambda t}$ for every
    $x\in S$ and every $t>0$.
  \end{lemma}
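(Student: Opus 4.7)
The plan is to combine Kingman's subadditive ergodic theorem with a weak-$*$ compactness argument on the space $\M(S)$ of $X_t$-invariant Borel probability measures on $S$. First I would observe that for any $\eta\in\M(S)$, invariance plus the subadditivity of $f_t$ make $t\mapsto\int f_t\,d\eta$ a subadditive function on $[0,+\infty)$, so Fekete's lemma yields
\begin{equation*}
  \bar f(\eta):=\lim_{t\to+\infty}\frac{1}{t}\int f_t\,d\eta=\inf_{t>0}\frac{1}{t}\int f_t\,d\eta.
\end{equation*}
Kingman's subadditive ergodic theorem (applied to the time-one map, with the continuous interpolation provided by continuity of $t\mapsto f_t$) then identifies this quantity with $\int f^*\,d\eta$, where $f^*(x):=\lim_{t\to+\infty}f_t(x)/t$ exists $\eta$-a.e. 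In this reformulation the hypothesis becomes $\bar f(\eta)\le-\la$ for every $\eta\in\M(S)$.

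Next I would argue by contradiction: if no such $K$ existed, then for some $\de>0$ one could find $x_n\in S$ and $t_n\to+\infty$ with $f_{t_n}(x_n)/t_n\ge-\la+\de$. Consider the empirical measures
\begin{equation*}
  \mu_n:=\frac{1}{t_n}\int_0^{t_n}\de_{X_s x_n}\,ds
\end{equation*}
on the compact set $S$; by weak-$*$ compactness of probability measures on $S$ a subsequence converges to some $\eta$, whose $X_t$-invariance follows from the standard Krylov--Bogolyubov argument, so $\eta\in\M(S)$.

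The core of the proof is to transfer the single-orbit value $f_{t_n}(x_n)$ to an integral of a fixed $f_T$ against $\mu_n$. For fixed $T>0$, write $t_n=N_nT+r_n$ with $0\le r_n<T$ and iterate the subadditive inequality along the partition $\{kT\}_{k=0}^{N_n}$ to obtain
\begin{equation*}
  f_{t_n}(x_n)\le\sum_{k=0}^{N_n-1}f_T(X_{kT}x_n)+f_{r_n}(X_{N_nT}x_n),
\end{equation*}
whose remainder is bounded uniformly by continuity of $(s,x)\mapsto f_s(x)$ on the compact $[0,T]\times S$. A standard smoothing step, averaging this inequality over starting phases in $[0,T]$, upgrades it to the continuous form $f_{t_n}(x_n)\le(1/T)\int_0^{t_n}f_T(X_sx_n)\,ds+C(T)$. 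Dividing by $t_n$, using continuity of $f_T$ and the weak-$*$ convergence $\mu_n\to\eta$, I would deduce $-\la+\de\le(1/T)\int f_T\,d\eta$; taking $\inf_{T>0}$ and invoking the Fekete/Kingman identification produces $-\la+\de\le\bar f(\eta)\le-\la$, the desired contradiction. Hence $\limsup_{t\to+\infty}\sup_{x\in S}f_t(x)/t\le-\la$, and continuity of $f_t$ on an initial compact time window absorbs the residual finite-time fluctuations into a constant $K>0$ with $\exp f_t(x)\le Ke^{-\la t}$ for all $t>0$ and $x\in S$.

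The hard part will be the transfer step: moving from the pointwise orbital value $f_{t_n}(x_n)$ to a bona fide integral against $\mu_n$ without degrading the exponent $\la$. The continuous-time smoothing and the matching of Riemann sums at times $kT$ to integrals against $\mu_n$ both rely crucially on continuity of the family $\{f_t\}$ together with compactness of $S$; once this smoothing is available, the Fekete/Kingman equality for $\bar f(\eta)$ closes the contradiction, and the upgrade from an asymptotic limsup bound to a uniform bound with constant $K$ is routine.
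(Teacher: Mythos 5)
The paper cites this result from \cite{ArbSal} rather than proving it, so there is no internal proof to compare against. On its own terms your argument has the right architecture: the transfer step --- iterating the subadditivity along $\{kT\}_{k=0}^{N_n}$, averaging over the starting phase, and passing to the weak-$*$ limit of the empirical measures --- is correctly sketched, and combined with the Fekete/Kingman identification of $\bar f(\eta)$ it does establish $\limsup_{t\to\infty}\sup_{x\in S}f_t(x)/t\le-\lambda$.

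The gap is at the two ends of the argument, which silently identify inequivalent statements. The negation you open with (``$\exists\,\delta>0$ and $x_n$, $t_n\to\infty$ with $f_{t_n}(x_n)/t_n\ge-\lambda+\delta$'') negates the limsup bound, not the uniform bound $\exp f_t(x)\le Ke^{-\lambda t}$, and the closing claim that continuity on an initial compact time window ``absorbs the residual finite-time fluctuations'' is false: a continuous subadditive function $g$ with $\lim_t g(t)/t=-\lambda$ need not satisfy $g(t)\le C-\lambda t$. Take the one-point system with $f_t\equiv g(t)=-\lambda t+\sqrt t$, which is subadditive and has $\int f^*\,d\delta_p=-\lambda$ for the unique invariant measure, yet $g(t)+\lambda t\to\infty$; this also shows that the quoted statement needs some strictness to hold literally with the same $\lambda$. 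What you are missing is the observation that $g(t):=\sup_{x\in S}f_t(x)$ is \emph{itself} continuous and subadditive in $t$ (take suprema in the pointwise subadditive inequality, using compactness of $S$), so Fekete gives $\lim_t g(t)/t=\inf_{t>0}g(t)/t$; when this infimum is strictly below $-\lambda$, fix a single $T_0$ with $g(T_0)<-\lambda T_0$ and iterate along $\{kT_0\}$ to obtain $g(t)\le-\lambda t+C$ directly, with $C=\max_{[0,T_0]}g+\lambda T_0$. The strictness needed at that point is present in every use of the lemma in this paper (the integrals are $\le\lambda-\alpha_1<0$ and $\le\alpha_1-\eta<0$), so the caveat is harmless in practice, but your proof should surface the subadditivity of $g$ to close the deduction.
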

  This provides constants
  $C_1,C_2>0$ so that, since
  $\|DW_t\mid_F\|=e^{\alpha_1 t}=\mm(DW_t\mid_F)$ independently
  of the norm, we obtain
  \begin{align}
    \|DW_t\mid_{E^c_x}\|\cdot\|DW_{-t}\mid_{F_{W_tx}}\|
    \le C_1e^{(\lambda-\alpha_1)t}
      &\qand
      \|DW_t\mid_{F_x}\|\cdot\|DW_{-t}\mid_{E^u_{W_tx}}\|
      \le
      C_2e^{(\alpha_1-\eta)t}\nonumber
    \\
    \text{and so}\quad
    \|DW_t\mid_{E^c_x}\|\le C_1e^{\lambda t}
      &\quad\text{and}\quad
    1<C_2 e^{-\eta t}\mm(DW_t\mid_{E^u_x}).\label{eq:strongdom}
  \end{align}
  These relations, together with items (1-3), ensure that
  $E^{s}\oplus F_0\oplus E^c\oplus F\oplus E^u$ is a partially
  hyperbolic splitting: the splitting is dominated;
  $E^{s}\oplus F_0$ is uniformly contracted; and $F\oplus E^u$
  uniformly expanded. Thus, we can assume without loss of
  generality that we have a Riemmanian metric $\|\cdot\|$ on
  $T_{\wh\Gamma}(M\times(\sS^1)^2)$ such that this splitting
  is orthogonal, $C_1=C_2=1$ in~\eqref{eq:strongdom}
  together with
  \begin{align}\label{eq:domF0}
    \|DX_t\mid E^{s}\|=\|DW_t\mid E^{s}\|
    &\le
      e^{-\lambda t}\mm(DW_t\mid F_0)
    \\
    \label{eq:strongdomadapt}
      \|DW_t\mid_{E^c_x}\|\le e^{\lambda t}
    &<e^{\alpha_1 t}=\|DW_t\mid_{F_x}\|
    <e^{\eta t}\le\|DW_t\mid_{E^u_x}\|;
  \end{align}
  and also $K=1$ on items (1-3), by using the result
  from~\cite{Goum07}.

  In particular, note that the subbundle
  $F_0\oplus E^c\oplus F$ has a sectional-hyperbolic
  splitting.  Indeed, we have that $F_0$ is uniformly
  contracted, $E^c$ is two-dimensional and area expanding,
  and $F$ is uniformly expanding; and thus $E^c\oplus F$ is
  sectionally-expanded. In fact, since the bundles are
  orthogonal we can argue as in
  Subsection~\ref{sec:norm-adapted-singul}: for any given
  fixed $t>0$, a unit vector $f_1\in F$ and a unit vector
  $v\in E^c$ we have, for any unit vector $\tilde v\in E^c$
  such that $DX_{-t} \tilde v$ is othogonal to $DX_{-t}v$,
  the following bound
  \begin{align*}
    \|(\wedge^2DW_{-t})(f_1\wedge v)\|
    &=
    \|DW_{-t}f_1\|^2\|DW_{-t}v\|^2-(DW_{-t}f_1,DW_{-t}v)^2
    \\
    &=
    \|DW_{-t}f_1\|^2\|DW_{-t}v\|^2
    \\
    &\le
      e^{2(\lambda-\alpha_1)t}\|DW_{-t}\tilde v\|^2\|DW_{-t}v\|^2
      =
      e^{2(\lambda-\alpha_1)t}\|(\wedge^2DW_{-t})(\tilde
      v\wedge v)\|^2
    \\
    &\le
      e^{2(\lambda-\alpha_1)t}\|\wedge^2DX_{-t}\mid \wedge^2E^c\|^2
      \le
      e^{2(\lambda-\alpha_1)t}\cdot K^2e^{-2\lambda t}
      =
      K^2e^{-2\alpha_1 t};
  \end{align*}
  where we have used the domination
  relations~\eqref{eq:strongdom} in the following form
  \begin{align*}
    1=
    \frac{\|\tilde v\|}{\|f_1\|}
    =
    \frac{\|DW_t\cdot DW_{-t}\tilde v\|}{\|DW_t\cdot DW_{-t}
    f_1\|}
    <
    e^{(\lambda-\alpha_1) t}\frac{\|DW_{-t}\tilde v\|}{\|DW_{-t}f_1\|}
  \end{align*}
  to pass from $f_1$ to $\tilde v$ together with item (4).

  Hence, after Lemma~\ref{le:aux1}, we have
  $\|\wedge^2DW_{-t}\mid \wedge^2(E^c\oplus F)\|\le Ke^{-\alpha_1t}$,
  $t>0$. Moreover, analogously, given $t>0$ and  unit
  vectors $f_0,v$ on $F_0$ and $E^c$, respectively, choosing now a unit
  $\tilde v\in E^c_x$ such that $DW_t\tilde v$ is orthogonal to
  $DX_tv$, we obtain
  \begin{align*}
    \|(\wedge^2DW_{t})(f_0\wedge v)\|^2
    &=
      \|DW_tf_0\|^2\cdot\|DW_tv\|^2
    \le
      e^{-2\lambda t}\|DW_t\tilde v\|^2\cdot\|DW_tv\|^2
    \\
    &\le
      e^{-2\lambda t}\|\wedge^2DW_t\cdot \tilde
      v\wedge v\|^2
      \le
      e^{-2\lambda t}\|\wedge^2DW_t\mid_{\wedge^2 E^c}\|^2.
  \end{align*}
    This shows that the following is a
  dominated splitting of $G=\wedge^2(F_0\oplus E^c\oplus F)$
  with respect to the action of $\wedge^2DW_t\mid G$:
  \begin{align*}
    G=\wedge^2(F_0\oplus E^c)\oplus \big((F_0\oplus
    E^c)\wedge F\big)=G_0\oplus G_1;
  \end{align*}
  where $G_0=\wedge^2(F_0\oplus E^c)$ and
  $G_1=(F_0\oplus E^c)\wedge F$ is the subbundle generated
  by the exterior products $u\wedge f$ for any
  $u\in F_0\oplus E^c$ and $f\in F$. In addition, $G_0$ also
  admits a partially hyperbolic splitting
  $F_0\wedge E^c\oplus\wedge^2E^c=\wt{F_0}\oplus\wt{E^c}$
  just like in the proof of
  Theorem~\ref{mthm:singadaptmetric}

  Thus, by \cite{Goum07}, there exists an adapted norm
  $|\cdot|_*$ on $G$ for the action of $\wedge^2DW_t\mid G$:
  this norm restricted to $G_0$ satisfies the assumptions of
  Lemma~\ref{le:final} with $m=3$, $E=F_0$ and $F=E^c$; and
  there exists $\xi>0$ so that for all
  $u,v,w\in F_0\oplus E^c$ and $t>0$
  \begin{align}
    \label{eq:dominaquadrado}
    \frac{|\wedge^2DW_t \cdot (u\wedge v)|_*}{|\wedge^2DW_t\cdot
    (w\wedge f)|_*} < e^{-\xi t}\frac{|u\wedge
    v|_*}{|w\wedge f|_*}.
  \end{align}
  We observe that both
  $G_0=\wedge^2(F_0\oplus E^c)\simeq F_0\oplus E^c$ and
  $\wedge^2(E^c\oplus F)\simeq E^c\oplus F$ since the
  dimensions are the same. Hence, by Lemma~\ref{lemma189},
  there are Riemannian norms $|\cdot|_0$ on $F_0\oplus E^c$
  and $|\cdot|_1$ on $E^c\oplus F$ which induce $|\cdot|_*$
  restricted to $G_0$ and $\wedge^2(E^c\oplus F)$,
  respectively. We write $\langle\cdot,\cdot\rangle_i$ for the innner
  products generating $|\cdot|_i,i=1,2$.

  Applying Lemma~\ref{le:final} to $(G_0,|\cdot|_*)$ and
  $(F_0\oplus E^c,|\cdot|_0)$ we
  obtain~\eqref{eq:def-dom-split-cocycle} with $F_0$ in the
  place of $F$. Moreover, fixing $t>0$, if we define the
  Riemannian metric
  $|\alpha_0f_0+v+\alpha_1f|^2=\beta^2|\alpha_0f_0+v|_0^2+|\alpha_1f|_1^2$
  on $F_0\oplus E^c\oplus F$ for any given unit vector
  $v\in E^c$ and $\alpha_0,\alpha_1\in\RR$ and some
  $\beta>0$, we get
  \begin{align*}
    \frac{|DW_t v|}{|DW_t f|}
    &=
      \beta\frac{|DW_tv|_0}{|DW_tf|_1}
      =
      \beta\frac{|\wedge^2DW_t\cdot(v\wedge\tilde v)|_*}
      {|\wedge^2DW_t\cdot(f\wedge u)|_*}
      \le
      e^{-\xi t}\cdot\beta\frac{|v\wedge\tilde v|_*}{|f\wedge u|_*}
  \end{align*}
  where $\tilde v\in E^c$ and $u\in E^c\oplus F_1$ are such
  that $|DW_t\tilde v|_0=1$ and
  $\langle DW_t\tilde v,DW_tv\rangle_0=0$ and
  $\langle DW_tu,DW_tf\rangle_1=0$. Note that the
  introduction of $\beta$ does not change any of the relations
  obtained for $|\cdot|_0$.

  Now we argue similarly as in the proof of
  Lemma~\ref{le:final}: we choose $\beta>0$ so that
  \begin{align*}
    \beta\cdot
    \sup_{x\in\wh{\Gamma},0<t\le1}
    \left\{
    \frac{|v\wedge DX_{-t}\tilde v|_*}{|f\wedge DX_{-t}u|_*}:
    v\in E^c_x; \,\tilde v, u\in E^c_{X_tx}
    \right\}
    \le1.
  \end{align*}
  This immediately provides the following for $0<t\le1$
  \begin{align}\label{eq:domF1}
  |DX_t\mid E^c|=|DW_t\mid E^c|\le e^{-\xi
    t}\mm(DW_t|F)=e^{-\xi t}e^{\mu_1t}.
  \end{align}
  For general $t>0$ we again apply Lemma~\ref{le:subaddinduction} with
  $a=1$.

  Now we check that the Riemannian metric defined by
  \begin{align*}
    \|w+\alpha_0f_0+v+\alpha_1f+u\|_*^2
    =
    \|w\|^2+|\alpha_0f_0+v+\alpha_1f|^2+\|u\|^2
  \end{align*}
  for
  $w\in E^s_x, v\in E^c_x, u\in E^u_x,
  \alpha_0.\alpha_1\in\RR$ and $x\in\wh{\Gamma}$ is a
  sectional-adapted metric when restricted to
  $E^s\oplus E^c\oplus E^u$.

  We just have the use the previous estimates: we already have
  uniformly adapted contraction along $E^s$ and expansion along $E^u$
  from items (1) and (3). For the domination relations, for $t>0$ we
  estimate\footnote{We write $\mm_*$ for the conorm associated to the
    norm $\|\cdot\|_*$.}  using~\eqref{eq:strongdom}
  and~\eqref{eq:domF0} and $|\cdot|=|\cdot|_0$ norm on $F_0\oplus E^c$
  \begin{align*}
    \frac{\|DX_t\mid E^s\|_*}{\mm_*(DX_t\mid E^c)}
    &=
      \frac{\|DX_t\mid E^s\|}{\|DW_t\mid F_0\|}
      \cdot
      \frac{\|DW_t\mid F_0\|}{\mm_*(DW_t\mid E^c)}
    =
    e^{-\lambda t}
    \cdot
    \frac{|DW_t\mid F_0|}{\mm(DW_t\mid E^c)}
    \le
      e^{-\lambda t}e^{-\sigma t};
    \\
    \frac{\|DX_t\mid E^c\|_*}{\mm_*(DX_t\mid E^u)}
    &=
      \frac{|DW_t\mid E^c|}{e^{\alpha_1 t}}
      \cdot
      \frac{e^{\alpha_1 t}}{\mm(DW_t\mid E^u)}
      \le
      e^{-\xi t}
      \cdot
      \frac{\|DW_t\mid F\|}{\mm(DW_t\mid E^u)}
      \le
      e^{-\xi t}e^{-\eta t};
  \end{align*}
  where we have used $\|DW_t\mid F_0\|=|DW_t\mid F_0|$ since
  $\dim F_0=1$ and analogously for $F$.  For the sectional expansion,
  we already have for $t>0$
  \begin{align*}
    \|\wedge^2DX_t\mid \wedge^2E^c\|_*
    =
    |\wedge^2DW_t\mid \wedge^2E^c|
    =
    |\wedge^2DW_t\mid \wedge^2E^c|_0
    \ge
    e^{\sigma t}
  \end{align*}
  for some $\sigma>0$. Since $E^c, F$ and $E^u$ are orthogonal for the
  metric inducing $\|\cdot\|_*$, for any given $v\in E^c, u\in E^u$ of
  unit norm, and $\tilde v\in E^c$ so that $\|DX_{-t}\tilde v\|_*=1$
  and $\langle DX_{-t}\tilde v,DX_{-t}v\rangle_*=0$, we obtain using
  the domination
  \begin{align*}
    \|\wedge^2DX_{-t}\cdot(v\wedge u)\|_*
    &=
    \|DX_{-t} v\|_*\|DX_{-t} u\|_*
    \le
    e^{-(\xi+\eta)t}\|DX_{-t}v\|_*\|DX_{-t}\tilde v\|_*
    \\
    &=
    e^{-(\xi+\eta)t}\|\wedge^2DX_{-t}\cdot(v\wedge \tilde
    v)\|_*
    \le
    e^{-(\xi+\eta+\sigma)t}\|v\wedge \tilde v\|_*.
  \end{align*}
  Finally, for $u,v\in E^u$ we estimate just like
  in~\eqref{eq:secexpu} to obtain
  \begin{align*}
    \|(\wedge^2DX_{-t})(u\wedge v)\|_*
    =
    \|(\wedge^2DX_{-t})(u\wedge \tilde v)\|_*
    \le
    e^{-2\eta t}\|u\wedge v\|_*
  \end{align*}
  where $u\wedge v=u\wedge\tilde v$ so that
  $DX_{-t}\tilde v$ is orthogonal to $DX_{-t}u$.  This
  coupled with Lemma~\ref{le:aux1} is enough to conclude
  that
  $\|\wedge^2DX_{-t}\mid\wedge^2(E^c\oplus
  E^u)\|_*<e^{-2\min\{\eta,\sigma\}t}$ for each $t>0$.

This completes the proof of
Theorem~\ref{mthm:sectadaptmetricsing}.
\end{proof}


\def\cprime{$'$}


\end{document}